\documentclass[10pt,twoside]{amsart}
\usepackage[hidelinks]{hyperref}
\usepackage{amssymb}
\usepackage{amscd}
\usepackage{amsmath}
\usepackage[all]{xy}
\usepackage{mathrsfs}
\usepackage{graphicx}
\usepackage{amsmath,amsthm,amssymb,amsfonts,tikz-cd, xcolor,inputenc}
\usepackage{soul}

\newtheorem{thm}{Theorem}[section]
\newtheorem{prop}[thm]{Proposition}
\newtheorem{proposition}[thm]{Proposition}
\newtheorem{lemma}[thm]{Lemma}

\newtheorem{example}[thm]{Example}

\newtheorem{lemma-def}[thm]{Lemma-Definition}
\newtheorem{theorem}{Theorem}[section]
\newtheorem{corollary}[thm]{Corollary}
\newtheorem{definition}[thm]{Definition}
\newtheorem{remark}[thm]{Remark}
\newtheorem{remarks}[thm]{Remarks}

\newtheorem{As}[thm]{Assumption}

\newtheorem{mainthm}{Theorem}

  1

\newcommand{\NN}{\mathbb N}
\newcommand{\QQ}{\mathbb Q}

\newcommand{\ZZ}{\mathbb Z}

\newcommand{\UU}{\mathbb U}
\newcommand{\F}{\mathcal F}

\newcommand{\co}{\mathcal O}
\newcommand{\cU}{\mathcal U}
\newcommand{\cT}{\mathcal T}

\newcommand{\cN}{\mathcal N}

\newcommand{\B}{\mathfrak B}

\newcommand{\cH}{\mathcal H}

\newcommand{\cm}{\mathcal M}

\newcommand{\cW}{\mathcal{W}}

\newcommand{\calL}{\mathcal L}
\newcommand{\p}{\mathfrak{p}}
\newcommand{\q}{\mathfrak{q}}

\newcommand{\calP}{\mathcal P}
\newcommand{\calK}{\mathcal K}

\newcommand{\Mat}{\mathrm{M}}
\newcommand{\Fq}{{\mathbb F}_q}
\newcommand{\Fl}{{\mathbb F}_\ell}
\newcommand{\expo}{{\rm exp}^{-1}_E(E(\co_K))}
\newcommand{\expm}{{\rm exp}^{-1}_E(E(\cm))}

\newcommand{\FiG}{\F_\infty[G]}
\newcommand{\Fi}{\F_\infty}

\DeclareMathOperator{\Fr}{Fr}

\DeclareMathOperator{\GL}{GL}

\DeclareMathOperator{\Nrd}{Nrd}

\DeclareMathOperator{\Spec}{MSpec}

 \DeclareMathOperator{\Fit}{Fit}

 \DeclareMathOperator{\im}{im}
 \DeclareMathOperator{\Gal}{Gal}
 
\DeclareMathOperator{\id}{id}

\DeclareMathOperator{\Hom}{Hom} 
\DeclareMathOperator{\Ext}{Ext} 
 
\DeclareMathOperator{\cok}{cok}


\renewcommand{\det}{\text{det}}

\DeclareMathOperator{\ev}{ev}
\newcommand{\evt}{\ev_{t^{-1}}}
\DeclareMathOperator{\KFit}{KFit}

\def\YEAR{\year}\newcount\VOL\VOL=\YEAR\advance\VOL by-1995
\def\firstpage{1}

\makeatletter
\def\magnification{\afterassignment\m@g\count@}
\def\m@g{\mag=\count@\hsize6.5truein\vsize8.9truein\dimen\footins8truein}
\makeatother

\oddsidemargin1.91cm\evensidemargin1.91cm\voffset1.4cm
\textwidth13.1cm\textheight19.5cm


%

\pagestyle{myheadings}
\pagenumbering{arabic}
\setcounter{page}{\firstpage}

\makeatletter
\setlength\topmargin {14\p@}
\setlength\headsep   {15\p@}
\setlength\footskip  {25\p@}
\setlength\parindent {20\p@}
\@specialpagefalse\headheight=8.5pt

\newcommand{\Addresses}{{
  \bigskip
  \footnotesize
  \parindent0pt

  \textsc{Universität Bonn, Mathematisches Institut, Endenicher Allee 62, 53115 Bonn (Germany).}\\
  \textit{E-mail address}: \texttt{midff@math.uni-bonn.de}

  \medskip

  \textsc{Instituto de Ciencias Matem\'aticas, Calle Nicol\'as Cabrera 13-15, Campus de Cantoblanco, 28049 Madrid (Spain)}.\\
  \textit{E-mail address}: \texttt{daniel.macias@icmat.es}
  
  \medskip

  \textsc{Departamento de Matem\'aticas, Universidad Aut\'onoma de Madrid, Campus de Cantoblanco, 28049 Madrid (Spain); and Instituto de Ciencias Matem\'aticas, 28049 Madrid (Spain)}.\\
  \textit{E-mail address}: \texttt{daniel.martinezm@uam.es}
}}

 \newcommand\blfootnote[1]{%
     \begingroup
     \renewcommand\thefootnote{}\footnote{#1}%
     \addtocounter{footnote}{-1}%
      \endgroup
    }

\hfuzz=0.1pt\tolerance=2000\emergencystretch=20pt\overfullrule=5pt
\begin{document}

\title[The refined class number formula]{The refined class number formula \\ for Drinfeld modules}
\author[de Frutos-Fern\'andez]{Mar\'ia In\'es de Frutos-Fern\'andez}
\author[Mac\'ias Castillo]{Daniel Mac\'ias Castillo}
\author[Mart\'inez Marqu\'es]{Daniel Mart\'inez Marqu\'es}

\begin{abstract}
Let $K/k$ be a finite Galois extension of global function fields. Let $E$ be a Drinfeld module over $k$. We state and prove an equivariant refinement of Taelman's analogue of the analytic class number formula for $(E,K/k)$, and derive explicit consequences for the Galois structure of the Taelman class group of $E$ over $K$.

\end{abstract}

\maketitle

\blfootnote{MSC: 11R58, 11R59, 11G09. Keywords: Drinfeld modules, special $L$-values.}

\vspace{-10pt}
\tableofcontents
\section{Introduction}\label{intro}

In the seminal article \cite{TAM}, Taelman proved an analogue of the analytic class number formula for Drinfeld modules. He stated that `it should be possible to formulate and prove an equivariant version' of this formula (cf. Remark 10 in loc. cit.).

Let $\ell$ be a prime number and let $q$ be a power of $\ell$. Let $k$ be a finite separable extension of the rational function field $\Fq(t)$. Let $K$ be a finite Galois extension of $k$ with group $G:={\rm Gal}(K/k)$.

Let $E$ be a Drinfeld $\Fq[t]$-module defined over the ring of integers $\co_k$ of $k$. We may then also regard $E$ as being defined over $\co_K$, and $G$ obviously acts on the $\Fq[t]$-modules $\co_K$ and $E(\co_K)$. 
The exponential map 
of $E$ is Galois-equivariant and this fact implies that $G$ acts naturally on the $\Fq[t]$-modules 
$\UU(E/\co_K)$ and $H(E/\co_K)$
that are studied by Taelman \cite{TMA} as respective analogues for $E/\co_K$ of the unit group and of the ideal class group 
of the ring of integers of a global field. 

An equivariant, or `refined', version of Taelman's class number formula for $(E,K/k)$ should encode the action of $G$ on these arithmetic $\Fq[t]$-modules. In the main result of this article, Theorem \ref{MT}, we prove such a formula, which specialises to recover Taelman's for $K=k$.

In the case that the Galois group $G$ of $K/k$ is abelian, a refined class number formula has been recently obtained by Ferrara, Green, Higgins and Popescu \cite{fghp}, who refer to it as an `equivariant Tamagawa number formula'. Our formula also specialises to recover it for such abelian extensions.

In the sequel we set $A:=\Fq[t]$, $\F:=\Fq(t)$ and $\F_\infty:=\Fq(\!(t^{-1})\!)$, the completion of $\F$ at the place at infinity (i.e., of uniformiser $t^{-1}$). We interpret our given Drinfeld $A$-module $E/\co_k$ as a functor that assigns $\co_k[G]$-modules $M$ a new $A$-action, thus a new $A[G]$-module structure $E(M)$.

\subsection{The refined trace formula}\label{11}
\subsubsection{Characteristic classes}
Taelman interprets the special $L$-value of $E/\co_K$ as an Euler product whose factors compare the characteristic polynomials of the finite $A$-modules $\co_K/\p$ and $E(\co_K/\p)$, as $\p$ runs through the maximal ideals of $\co_K$.

At the moment there is however no suitable notion of `reduced determinant' or `reduced norm' over general group rings in positive characteristic (but see also \S \ref{13} below for a more detailed discussion of this issue). 

As a key component of our approach and a generalisation of the notion of characteristic polynomial we are led to define, for suitable finite $A[G]$-modules $M$, the `characteristic class' $c_G(M)$
of $M$ to be the class of the automorphism
$$(t\otimes 1)-(1\otimes t)$$
of $\FiG\otimes_{\Fq[G]}M$, regarded as an element of the Whitehead group $K_1(\FiG)$ of the group ring $\FiG$ (see Definition \ref{ccdef} for details).

We observe that if $G$ is abelian, then the determinant of the characteristic class of $M$ is simply a generator of the Fitting ideal of $M$ in $A[G]$ while, if $\ell$ does not divide $|G|$, then its reduced norm is a generator of the non-commutative Fitting invariant of $M$, as defined by Nickel \cite{nickel} or by Burns and Sano \cite{nagm} (see Lemma \ref{Fitchar}).

In the general case, characteristic classes allow us to define equivariant special $L$-values for $(E,K/k)$ as Euler products in the abelian group $K_1(\FiG)$.

\subsubsection{Convergence and the trace formula}
As an essential intermediate result towards the proof of his main theorem, Taelman proves a `trace formula' in $\F_\infty^\times$ for the Euler products that arise from global `nuclear' automorphisms (see \cite[Thm. 3]{TAM}). This result is a generalisation of the trace formula of Anderson \cite{Anderson}.

As a special case, Taelman deduces the convergence in $\F_\infty^\times$ of the special $L$-value of $E$ to the (infinite-dimensional) determinant of a certain global action defined by $E$. 

We endow $K_1(\FiG)$ with the quotient topology of the natural group topology in $\FiG^\times$ and prove a refined trace formula 
in this topological group (see Theorem \ref{tf}). As far as we are aware, the idea of studying the convergence of Euler products directly in an algebraic $K$-group is completely new.

In Corollary \ref{tfcor} we then deduce a convergence formula for the special $L$-values for $(E,K/k)$. 
For $K=k$, this result corresponds to Taelman's via the determinant map $K_1(\F_\infty)\cong \F_\infty^\times$. 
More generally, if $G$ is abelian, then it corresponds via the determinant $K_1(\F_\infty[G])\cong \F_\infty[G]^\times$ to \cite[Cor. 3.0.3]{fghp}.
One should thus think of the refined trace formula as a direct relation between the relevant $A[G]$-actions. 

As a key step towards the refined trace formula, we prove a topological extension of the structural result \cite[Prop. 1.5.1]{fukaya-kato} of Fukaya and Kato for Whitehead groups of adic rings (see Proposition \ref{151}).

\subsection{The refined class number formula}\label{12}
\subsubsection{The lattice of units and the class group}
In \cite{TMA}, Taelman studies the kernel $\UU(E/\co_K)$ and the cokernel $H(E/\co_K)$ of the exponential map
\begin{equation}\label{Kinfty}K_\infty\,\longrightarrow\, E(K_\infty)/E(\co_K)\end{equation}
of $E$. Here we have set $K_\infty:=\F_\infty\otimes_{\F}K$.
He proves that $\UU(E/\co_K)$ is a finitely generated $A$-submodule of $K_\infty$, while $H(E/\co_K)$ is a finite $A$-module. 

As an equivariant version of these results, we prove in Theorem \ref{PBT} that appropriate modifications of (\ref{Kinfty}), which we refer to as `complexes of units' for $(E,K/k)$, are \emph{perfect} complexes of $A[G]$-modules: that is, isomorphic in the derived category to a bounded complex of finitely generated, projective modules. 

\subsubsection{The refined Euler characteristic of the complex of units}
Taelman's class number formula for $E/\co_K$ in \cite{TAM} states that the ratio of co-volumes in $\F_\infty^\times$ of the $A$-lattices $\co_K$ and $\UU(E/\co_K)$, multiplied by the characteristic polynomial of $H(E/\co_K)$, coincides with the special $L$-value of $E/\co_K$. To be more precise, this first term is defined through the determinant that measures the relative positions of $\co_K$ and $\UU(E/\co_K)$ in $K_\infty$, but see also Remarks 8 and 14 in loc. cit.

Our main result, the refined class number formula for $(E,K/k)$ (see Theorem \ref{thm:mainA}, and Theorem \ref{MT} for a precise version thereof), compares the special $L$-value for $(E,K/k)$ to the refined Euler characteristic of the complex of units for $(E,K/k)$. Specifically, we consider the image of the special $L$-value for $(E,K/k)$ under the canonical homomorphism $$K_1(\FiG)\,\longrightarrow \,K_0(A[G],\FiG)$$ to the relative algebraic $K_0$-group of the ring inclusion $A[G]\subset\FiG$ (see Remark \ref{impartial} for the explicit definition of this homomorphism).

Relative $K_0$-groups of inclusions of group rings are by now regarded as the natural environment for equivariant special $L$-value formulas (see for example \cite{bdals,mcrc,bufl99,bk,cfksv,kakde,RWnew}). In particular, $K_0(A[G],\FiG)$ is the natural environment for the refined Euler characteristic of a pair $(C,\mu)$ comprising: a perfect complex $C$ of $A[G]$-modules (that satisfies an additional condition as in Definition \ref{Dpc}); and an $\Fi$-trivialisation $\mu$ of $C$ (in the sense of \cite[\S 2.3.2]{omac}). The adjective `refined' refers here to the fact that the refined Euler characteristic of $(C,\mu)$ maps to the classical Euler characteristic of $C$ in the Grothendiek group $K_0(A[G])$.

Such refined Euler characteristics can be defined by mimicking the use of Deligne's determinant functor (with values in the category of virtual objects) that is made by Breuning and Burns in \cite{Breuning Burns} (see Remark \ref{ECRemark}). They provide a conceptual and general interpretation of the ratios of co-volumes that occur in \cite{TAM,fghp}.

For simplicity, we use instead a completely explicit definition in the special case of interest for this article (see Definition \ref{ECex}). In particular, other than to state Theorem \ref{thm:mainA}, we do not use the term `trivialisation' in the rest of this article.

Our refined class number formula (Theorem \ref{thm:mainA}, Theorem \ref{MT}) specialises to recover the equivariant Tamagawa number formula of Ferrara, Green, Higgins and Popescu \cite{fghp} for abelian extensions, and thus also Taelman's main result in \cite{TAM} for $K=k$ (see Remark \ref{impartial}, and Remark \ref{forearlierref} for further details).

\subsubsection{Taming modules}
In the above discussion we have referred to special $L$-values and to complexes of units for $(E,K/k)$, both in plural, so as to avoid mentioning a specific technical issue, which we now address explicitly to state Theorem \ref{thm:mainA}.

By Noether's Theorem, 
$\co_K$ is projective as an $\co_k[G]$-module if and only if the ring extension $\co_K/\co_k$ is tamely ramified. In fact, given a maximal ideal $\p$ of $\co_k$, the quotient $\co_K/\p\co_K$ is $\Fq[G]$-projective if and only if $\p$ is tamely ramified in $K/k$, but the notion of characteristic class only suits finite $A[G]$-modules that are $\Fq[G]$-projective.

In a similar way, it follows easily from Noether's Theorem that if $\co_K/\co_k$ is not tamely ramified, then the complex of $A[G]$-modules (\ref{Kinfty}) cannot itself be perfect.

To get around these technical difficulties, we use the notion of `taming module' introduced by Ferrara--Green--Higgins--Popescu \cite[Def. 1.3.2]{fghp}. 
Although it is discussed in the abelian setting in loc. cit., 
taming modules $\cm$ exist for general Galois extensions $K/k$ (see Definition \ref{defTM} and Remark \ref{TMremark3}),
with the following basic properties: \begin{itemize}\item $\cm$ is a projective $A[G]$-submodule of $\co_K$ with finite index, and
\item for each maximal ideal $\p$ of $\co_k$ the $\Fq[G]$-module $\cm/\p\cm$ is free,  
with $$\cm/\p\cm=\co_K/\p\co_K$$ whenever $\p$ is tamely ramified in $K/k$.\end{itemize}
In fact, if $\co_K/\co_k$ is tamely ramified, then the only taming module for $K/k$ is $\cm=\co_K$.

For a given taming module $\cm$, one can thus define a $K$-theoretic Euler factor at a wildly ramified prime $\p$ of $\co_k$ as the difference between the characteristic classes $c_G(\cm/\p\cm)$ and $c_G(E(\cm/\p\cm))$ of the $A[G]$-modules $\cm/\p\cm$ and $E(\cm/\p\cm)$. Regarding $\cm$ as fixed, we are thus led to define the `$\cm$-modified $L$-value' $\Theta^{\cm}_{E,K/k}$ of $(E,K/k)$ as the corresponding Euler product, which converges in $K_1(\FiG)$ (Corollary \ref{tfcor}) as a consequence of our refined trace formula (Theorem \ref{tf}).

In a similar way, for our fixed $\cm$, we define the `$\cm$-modified complex of units' of $(E,K/k)$ to be the complex $C^{\cm}_{E,K/k}$ of $A[G]$-modules $$K_\infty\,\xrightarrow{({\rm exp}_E,0)}\,\left(E(K_\infty)/E(\cm)\right)\,\oplus\,\cm,$$ with the first term in degree one and ${\rm exp}_E$ induced by the exponential map of $E$.

Our main result, Theorem \ref{MT}, may then be roughly stated as follows.

\begin{mainthm}\label{thm:mainA} Let $E$ be a Drinfeld module defined over $\co_k$ and let $\cm$ be a taming module for $K/k$. Then the following claims are valid.
\begin{itemize}
    \item[(1)]The infinite product
$$\Theta^\cm_{E,K/k}:=\prod_{\p\in\Spec(\co_k)}\left(c_G\bigl(\cm/\p\cm\bigr)\cdot c_G\bigl(E(\cm/\p\cm)\bigr)^{-1}\right)$$
converges to a unique element of $K_1(\Fi[G])$.
\item[(2)]The complex of $A[G]$-modules $C^{\cm}_{E,K/k}$ is perfect.
\item[(3)]The canonical image of $\Theta^{\cm}_{E,K/k}$ in $K_0(A[G],\F_\infty[G])$ is equal to the inverse of the refined Euler characteristic of the pair $(C^{\cm}_{E,K/k},\mu)$, where
$\mu$ is the $\Fi$-trivialisation of $C^{\cm}_{E,K/k}$ induced by the inclusions $\cm\subseteq\co_K\subset K_\infty$.
\end{itemize}
\end{mainthm}

See also Remark \ref{convergencerk} and Corollary \ref{tfcor} for further details on claim (1), and Remark \ref{formofreprk} and Theorem \ref{PBT} for further details on claim (2).

Although both sides of the equality in claim (3) depend on the given choice of $\cm$, the equality itself is \textit{a priori} independent of the choice of taming module.

We also refer the reader to Remark 6.2.4 in \cite{fghp} for a discussion of the analogy between the above instances of $\cm$-modification, and the $T$-modifications that have been commonplace in the study of equivariant special Artin $L$-values for global fields since the work of Gross in \cite{gross} and of Rubin in \cite{rubin}. We simply recall that such $T$-modifications are necessary in order to circumvent certain technical difficulties associated with the presence of roots of unity.

\subsection{A construction of non-abelian Stickelberger elements}\label{13}In this section we discuss some explicit consequences of our main result (Theorem \ref{thm:mainA}, Theorem \ref{MT}).

By Maschke's Theorem, the group rings $\F[G]$ and $\F_\infty[G]$ are semisimple if and only if the characteristic $\ell$ does not divide the degree of the extension $K/k$. The theory of reduced norms is often restricted to semisimple rings in the literature.

More generally, given a finite group $G$, it is also a classical result (see Proposition \ref{NEWfiniteproducts}) that group rings for $G$ in characteristic $\ell$ decompose as finite direct sums of matrix rings over commutative rings if and only if $G$ has an abelian $\ell$-Sylow subgroup and a normal $\ell$-complement, if and only if $\ell$ does not divide the order of the commutator subgroup of $G$. 
Over such group rings, it is still possible to define a well-behaved notion of reduced determinant that recovers semisimple reduced norms, as well as determinants in the abelian case (see Definition \ref{rdfidefs}).

In the rest of \S \ref{13}, we assume that $\ell$ does not divide the degree of $K$ over the maximal abelian subextension of $k$ in $K$. 
For any Drinfeld module $E/\co_k$ and any taming module $\cm$ for $K/k$, we then define the `$\cm$-modified Stickelberger element' of $(E,K/k)$ as the reduced determinant of the $\cm$-modified $L$-value,
$$\theta^{\cm}_{E,K/k}\,:=\,{\rm Nrd}_{\F_\infty[G]}\left(\Theta^{\cm}_{E,K/k}\right)\,\in\,Z(\F_\infty[G])^\times.$$
Here $Z(\F_\infty[G])^\times$ denotes the unit group of the centre $Z(\FiG)$ of $\FiG$.

Reduced determinants also allow us to define non-commutative Fitting ideals over $A[G]$, in a manner that is compatible with the constructions of Nickel \cite{nickel}, Johnston-Nickel \cite{jn} and Burns-Sano \cite{nagm} for orders in semisimple algebras (see Definition \ref{rdfidefs}).

As a consequence of Theorem \ref{MT}, we then derive in Theorem \ref{mtII} that suitable normalisations of $\theta^{\cm}_{E,K/k}$ belong to the non-commutative Fitting ideal of the `$\cm$-modified Taelman class group' 
$$H(E/\cm)\,:=\,\cok\left(K_\infty\,\xrightarrow{{\rm exp}_E}\, E(K_\infty)/E(\cm)\right)$$
of $(E,K/k)$. More precisely, we define a subset $\mathcal{R}$ of $Z(\FiG)$ for which
\begin{align*}Z(A[G])\cdot\theta^{\cm}_{E,K/k}\cdot\mathcal{R}\,\,\subseteq&\,\Fit_{A[G]}\left(H(E/\cm)\right)\\ \subseteq&\,\Fit_{A[G]}\left(H(E/\co_K)\right)\\ \subseteq&\,{\rm Ann}_{Z(A[G])}\left(H(E/\co_K)\right)\end{align*}
as ideals in the centre $Z(A[G])$ of $A[G]$. Let us note that $\theta^{\cm}_{E,K/k}$ is typically transcendental, and that $\mathcal{R}$ is amenable to computation and always non-trivial (see Remark \ref{n-tremark} and \S \ref{322}). 

For any finite abelian extension $K/k$ and any Drinfeld module $E/\co_k$, Theorem \ref{mtII} recovers the `refined Brumer-Stark Theorem for Drinfeld modules' \cite[Thm. 1.5.5]{fghp} of Ferrara, Green, Higgins and Popescu (see Remark \ref{BScomp}).

For any finite Galois extension $K/k$ of degree not divisible by $\ell$ and any Drinfeld module $E/\co_k$, one necessarily has $\cm=\co_K$ (so we may drop the superscript $\cm$) and, as Corollary \ref{mtIII} to Theorem \ref{mtII}, we prove the equality of $Z(A[G])$-ideals
 $$Z(A[G])\cdot\theta_{E,K/k}\cdot\mathcal{R}=\Fit_{A[G]}\left(H(E/\co_K)\right).$$
This equality is a vast non-abelian generalisation of Theorem A of Angl\`es and Taelman in \cite{at} (see Remark \ref{ATcomp}).

The question of how to construct non-abelian Stickelberger elements in completely general finite Galois extensions $K/k$ remains open. We will return to it in future work.

\subsection{Structure of the article} In \S\ref{2} we introduce characteristic classes and refined Euler characteristics and we state Theorem \ref{MT}. In \S\ref{3} we introduce reduced determinants and non-commutative Fitting ideals and we state Theorem \ref{mtII} and Corollary \ref{mtIII}. In \S\ref{4} we prove a topological extension of a result of Fukaya and Kato for Whitehead groups and we use it to associate a suitable class to nuclear automorphisms. In \S\ref{5} we prove the refined trace formula and we apply it to derive the convergence of the special $L$-values. In \S\ref{proofMT} we prove Theorem \ref{MT} and in \S\ref{7} we prove Theorem \ref{mtII}.

\subsection{Acknowledgements} The authors are very grateful to Oliver Braunling for his generous and helpful advice regarding the proof of Proposition \ref{Braunling} that we present in \S \ref{Nsection}. They are also very grateful to Carlos de Vera Piquero for his interest in this project and for actively and frequently participating in pertinent discussions. 

They also thank Bruno Angl\`es, Francesc Bars, Dominik Bullach, Jos\'e Ignacio Burgos Gil, David Burns, Alexandre Daoud, Andrei Jaikin-Zapirain, Henri Johnston, Donghyeok Lim, Maxim Mornev, Cristian Popescu and Lenny Taelman for many helpful discussions and correspondance, and an anonymous referee for their careful reading, and their generous comments and suggestions, that helped improve the article.

The first author acknowledges support from the Grant CA1/RSUE/2021-00623 funded by the Spanish Ministry of Universities, the Recovery, Transformation and Resilience Plan, and Universidad Autónoma de Madrid; and from the Deutsche Forschungsgemeinschaft (DFG, German Research Foundation) under Germany’s Excellence Strategy – EXC-2047/1 – 390685813.
The second author acknowledges support for this article as part of Grants CEX2023-001347-S, PID2022-142024NB-I00 and CNS2023-145167 funded by MICIU/AEI/10.13039/501100011033.

\section{The refined class number formula}\label{2}

In this section we state our main result.
We fix a prime number $\ell$ and a power $q$ of $\ell$. 
We set $A:=\Fq[t]$, $\F:=\Fq(t)$ and $\F_\infty:=\Fq(\!(t^{-1})\!)$. 
For any field extension $L/\F$ and $A$-module $M$, we set $M_L:=L\otimes_A M$ and use identical notation for homomorphisms.

\subsection{Characteristic classes}
We first define the notion of characteristic classes in Whitehead groups.

\subsubsection{}\label{211}
Unless otherwise specified, we regard all rings $R$ as unital and all $R$-modules as left $R$-modules. We write $Z(R)$ for their centre, $R^\times$ for their unit group and $K_1(R)$ for their Whitehead group.

We recall that, in addition to its interpretation as the abelianisation of $\GL(R)$, 
the latter group is the $K_1$-group of the category of finitely generated, projective $R$-modules (cf. \cite{swan,curtisr}). We write $[P,\phi]$, or simply $[\phi]$ when $P$ is clear from context, for the class of an automorphism $\phi$ of a finitely generated projective $R$-module $P$.
We always use multiplicative notation for $K_1$-groups.

A ring is said to be semilocal if the quotient by its Jacobson radical is a semisimple Artinian ring. 
If $R$ is semilocal, then Bass' Theorem implies that the canonical group homomorphism $R^\times\to K_1(R)$ is surjective (see \cite[Vol. II, Thm. (40.31)]{curtisr}).

If $R$ is a semilocal topological ring, we endow $R^\times$ with the group topology induced by the injection
\[R^\times\,\longrightarrow\,R\times R\]
that maps $u\in R^\times$ to $(u,u^{-1})$. We then endow the quotient $K_1(R)$ of $R^\times$ with the corresponding group topology.
In general, this topology need not be Hausdorff, even if $R$ is. In Proposition \ref{151} below we prove that if $R$ is an adic ring in the sense of \cite{fukaya-kato}, then $K_1(R)$ is in fact a profinite abelian group.

\begin{lemma}\label{ctsopenlem} Let $\beta:\Lambda\to S$ be an injective, continuous and open homomorphism between semilocal topological rings. Then the restriction $\gamma:\Lambda^\times\to S^\times$ of $\beta$, and the map $\delta:K_1(\Lambda)\to K_1(S)$ induced by $\beta$, are both continuous and open. In particular, $\im(\delta)$ is a clopen subgroup of $K_1(S)$.
\end{lemma}
\begin{proof} 
We write $\iota_1$, $\iota_2$ for the respective continuous injections $\Lambda^\times\to\Lambda\times\Lambda$ and $S^\times\to S\times S$ that map $u$ to $(u,u^{-1})$, note that $\iota_2\circ\gamma=(\beta\times\beta)\circ\iota_1$, and fix open subsets $U_1,U_2$ of $\Lambda$ and $V_1,V_2$ of $S$.
Then $\gamma$ is continuous because
$$\gamma^{-1}(\iota_2^{-1}(V_1\times V_2))=\iota_1^{-1}((\beta\times\beta)^{-1}(V_1\times V_2))=\iota_1^{-1}(\beta^{-1}(V_1)\times\beta^{-1}(V_2))$$
is open. Also, $\gamma$ is open because the following image is open:
\begin{align*}\gamma(\iota_1^{-1}(U_1\times U_2))=&\{\beta(\lambda):\lambda\in\Lambda^\times, \lambda\in U_1, \lambda^{-1}\in U_2\}\\
=&\{s\in(S^\times\cap\beta(U_1)): s^{-1}\in(S^\times\cap\beta(U_2))\}&\\
=&\{s\in S^\times:s\in\beta(U_1),s^{-1}\in\beta(U_2)\}&=\iota_2^{-1}(\beta(U_1)\times\beta(U_2)).
\end{align*}
Here the second equality is valid because if $s=\beta(u_1)\in S^\times$ with $s^{-1}=\beta(u_2)$ and with $u_i\in U_i$, then $1=\beta(u_1u_2)$ so, since $\beta$ is injective, $u_1$ belongs to $\Lambda^\times$ with $u_1^{-1}=u_2$.

We next write $\pi_1$, $\pi_2$ for the respective surjections $\Lambda^\times\to K_1(\Lambda)$ and $S^\times\to K_1(S)$, recall that they are continuous and open, note that $\delta\circ\pi_1=\pi_2\circ\gamma$, and fix open subsets $U$ of $K_1(\Lambda)$ and $V$ of $K_1(S)$. Then $\delta$ is continuous because
$$\delta^{-1}(V)=\pi_1(\gamma^{-1}(\pi_2^{-1}(V)))$$
is open, and $\delta$ is open because the following image is also open:
$$\delta(U)=\delta(\pi_1(\pi_1^{-1}(U)))=\pi_2(\gamma(\pi_1^{-1}(U))).$$
\end{proof}

\subsubsection{}

We next recall some useful facts concerning group rings. Let $G$ be a finite group. A $G$-module $M$ is said to be \emph{$G$-cohomologically-trivial}, or \emph{$G$-c.-t.}, if the Tate cohomology groups of $M$ vanish in all degrees and with respect to all subgroups of $G$.

\begin{lemma}\label{TMremark1}
(i) If $S$ is a Dedekind domain (or a field), then an $S[G]$-module is projective if and only if it is $S$-projective and $G$-c.-t.

\noindent{}(ii) If $T/S$ is an extension of fields and $M,N$ are finitely generated $S[G]$-modules, then
$M\cong N$ if and only if $T\otimes_S M\cong T\otimes_S N$ over $T[G]$. 

\noindent{}(iii) If $S$ is a discrete valuation ring with quotient field $T$ and $M,N$ are finitely generated, projective $S[G]$-modules, then
$M\cong N$ if and only if $T\otimes_S M\cong T\otimes_S N$ over $T[G]$.
\end{lemma}
\begin{proof} To prove claim (i) we note that a free $S[G]$-module $\bigoplus_{I}S[G]\cong\bigoplus_I(S\otimes_{\ZZ}\ZZ[G])\cong(\bigoplus_I S)\otimes_{\ZZ}\ZZ[G]$ is $G$-induced. Now, a projective $S[G]$-module is clearly $S$-projective and, since it is a direct summand in a free $S[G]$-module, it is relatively projective as a $G$-module (cf. \cite[Ch. VIII, \S 1]{SerreLocFie}) and thus $G$-c.-t. by \cite[Ch. IX, \S 3, Examples]{SerreLocFie}.

For the converse implication, we follow the proof of \cite[Prop. 4.1.b]{chinburg} to reduce it to the case \cite[Prop. 4.1.a]{chinburg} of fields. Alternatively, one could also deduce claim (i) from \cite[Prop. 4.1.b]{chinburg}.

We let $N$ be an $S[G]$-module that is $S$-projective and $G$-c.-t. By \cite[Ex. 14.2, p. 120]{SerreLinRep}, it suffices to fix an arbitrary maximal ideal $\p$ of $S$ and prove that the $(S/\p)[G]$-module $N/\p N$ is projective.
We fix $\p$ and a uniformiser $\pi$ such that $\p S_{(\p)}=\pi S_{(\p)}$ and set $P:=S_{(\p)}\otimes_S N$. Then $P$ is $S_{(\p)}$-projective (in fact, free) and, since Tate cohomology commutes with localisation, also $G$-c.-t. In addition,
$$N/\p N\cong (S/\p) \otimes_S N\cong(S_{(\p)}/\p S_{(\p)}) \otimes_S N\cong(S_{(\p)}/\pi S_{(\p)}) \otimes_{S_{(\p)}} P\cong P/\pi P.$$
Thus, the exactness of $0\to P\stackrel{\pi}{\to}P\to P/\pi P\to 0$ implies that $N/\p N$ is $G$-c.-t. Since $S/\p$ is a field, $N/\p N$ is $(S/\p)[G]$-projective by \cite[Prop. 4.1.a]{chinburg}, as required.

Claim (ii) is the Noether--Deuring Theorem (cf. \cite[Vol. I., \S 6, p. 139, Ex. 6]{curtisr} or \cite[(19.25)]{lam}) and claim (iii) is Swan's Theorem (cf. \cite[Vol. I, Thm. (32.1)]{curtisr}).
\end{proof}

\subsubsection{}
In this section we fix an arbitrary finite group $G$ and associate a characteristic class in the (topological) group $K_1(\F[G])$ to a natural family of $A[G]$-modules. 

For a finite $A[G]$-module $M$, we consider the finitely generated $A[G]$-module $$M':=A[G]\otimes_{\Fq[G]}M$$ with $t(h\otimes m)=th\otimes m$ for $h\in A[G]$ and $m\in M$. We then define $\tau_M\in{\rm End}_{A[G]}(M')$ by setting
$$\tau_M(h\otimes m):=(th\otimes m)-(h\otimes t\cdot m).$$
By \cite[Vol. II, (43.4)]{curtisr}, $\tau_M$ has the following useful property.
\begin{lemma}\label{43.4} Let $M$ be a finite $A[G]$-module. Then there is an exact sequence
$$0\to M'\stackrel{\tau_M}{\longrightarrow}M'\longrightarrow M\to 0.$$
In particular, the scalar extension $\tau_{M,\F}$ of $\tau_M$ belongs to ${\rm Aut}_{\F[G]}(M'_\F)$
\end{lemma}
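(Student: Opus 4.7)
The plan is to exhibit the surjection $M' \to M$ explicitly and then verify exactness by a grading argument. I would set $\mu\colon M' \to M$ to be the natural multiplication map $h \otimes m \mapsto h \cdot m$ using the given $A[G]$-action on $M$. This is well-defined on $M'=A[G]\otimes_{\Fq[G]}M$, is $A[G]$-linear, and is surjective since $\mu(1 \otimes m) = m$. Crucially, because $t$ is central in $A[G]$, one has $\mu(\tau_M(h \otimes m)) = (th)m - h(tm) = 0$, so $\tau_M$ factors through a map $M' \to \ker(\mu)$, and it remains to show that this factored map is a bijection.

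For the two remaining assertions I would exploit the $\Fq[G]$-module grading $M' = \bigoplus_{n \geq 0} t^n \otimes M$, which comes from the fact that $A[G] = \Fq[G][t]$ is free as a right $\Fq[G]$-module on $\{t^n\}_{n \geq 0}$. Injectivity of $\tau_M$ is then immediate from a leading-term argument: if $y = \sum_{i=0}^{n} t^i \otimes m_i$ satisfies $m_n \neq 0$, then $\tau_M(y) = \sum_i \bigl(t^{i+1} \otimes m_i - t^i \otimes t m_i\bigr)$ has non-zero top-degree component $t^{n+1} \otimes m_n$. For exactness in the middle, given $x \in \ker(\mu)$ of degree $n \geq 1$ with top coefficient $m_n$, the element $x - \tau_M(t^{n-1} \otimes m_n)$ still lies in $\ker(\mu)$ but has strictly smaller degree; iterating reduces to degree zero, where $\ker(\mu)$ is trivial, so $x \in \mathrm{Im}(\tau_M)$.

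The ``in particular'' clause is then formal. Because $M$ is finite as a set it is $A$-torsion, so $M_\F = \F \otimes_A M$ vanishes. Since $\F$ is a localisation of $A$ and hence $A$-flat, tensoring the short exact sequence with $\F$ over $A$ preserves left exactness and yields $0 \to M'_\F \xrightarrow{\tau_{M,\F}} M'_\F \to 0$, so that $\tau_{M,\F} \in \Aut_{\F[G]}(M'_\F)$. I do not anticipate any serious obstacle: the proof is essentially a routine verification once the grading and the centrality of $t$ in $A[G]$ are made explicit.
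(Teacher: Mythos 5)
The paper does not actually supply a proof of this lemma; it simply cites \cite[Vol.\ II, (43.4)]{curtisr} for the exactness of the sequence. Your argument is a correct and self-contained verification of that cited result. The natural surjection $\mu(h\otimes m)=h\cdot m$ is the right map (and is well-defined on the balanced tensor product because the $\Fq[G]$-action on $M$ is the restriction of the $A[G]$-action), the identity $\mu\circ\tau_M=0$ is indeed a direct consequence of the centrality of $t$ in $A[G]$, and the decomposition $M'=\bigoplus_{n\geq 0}t^n\otimes M$ --- valid because $A[G]$ is free as a right $\Fq[G]$-module on $\{t^n\}_{n\geq 0}$ --- supports both the leading-coefficient argument for injectivity of $\tau_M$ and the degree-reduction argument for exactness in the middle. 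The final deduction that $\tau_{M,\F}$ is an $\F[G]$-automorphism is likewise correct: a finite $A$-module is $A$-torsion, so $M_\F=0$, and flatness of the localisation $\F$ over $A$ makes the base-changed sequence reduce to $0\to M'_\F\xrightarrow{\tau_{M,\F}} M'_\F\to 0$. There is no gap; this is a clean direct proof of exactly what the paper takes on faith from Curtis--Reiner.
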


\begin{definition}\label{ccdef}{\em Let $G$ be a finite group. Let $M$ be an $A[G]$-module that is both finite and $G$-cohomologically-trivial.
The \emph{characteristic class} of $M$ is the element $$c_G(M):=[M'_\F,\tau_{M,\F}]$$ of $K_1(\F[G])$. 
By abuse of notation, we identify $c_G(M)$ with its image $[M'_{\Fi},\tau_{M,\Fi}]$ in $K_1(\FiG)$.
}\end{definition}

\begin{remarks}\label{ccremarks}{\em \

\noindent{}(i) An $\Fq[G]$-module that is $G$-c.-t. is $\Fq[G]$-projective (by Lemma \ref{TMremark1}(i)). Thus, $M'_\F$ is $\F[G]$-projective.

\noindent{}(ii) If $G$ is abelian, the determinant of $\tau_M\in{\rm End}_{A[G]}(M')$ is a generator of the Fitting ideal of $M$ in $A[G]$. In fact, it is the `monic' generator in the sense of \cite[Rem. A.3.3]{fghp}.

\noindent{}(iii) See also Lemma \ref{Fitchar} below for a similar explicit relationship between characteristic classes and non-commutative Fitting ideals. 
}\end{remarks}

Characteristic classes are multiplicative, in the following sense.

\begin{lemma}\label{additivity} If $0\to L\to M\to N\to 0$ is a short exact sequence of $A[G]$-modules, two of which are finite and $G$-c.-t., then so is the third, and one has $c_G(M)=c_G(L)\cdot c_G(N)$.
In particular, characteristic classes are well-defined on isomorphism classes of $A[G]$-modules (that are finite and $G$-c.-t.), and commute with finite direct products.\end{lemma}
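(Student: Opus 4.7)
My plan is to lift the given exact sequence through the functorial construction $M \rightsquigarrow (M', \tau_M)$ and then invoke the classical additivity of $K_1$ on short exact sequences of automorphisms of finitely generated projective modules. The two-out-of-three property for finiteness is immediate from exactness of $0 \to L \to M \to N \to 0$. For the analogous two-out-of-three property for $G$-cohomological triviality, I would apply Tate cohomology $\hat H^*(H,-)$ with respect to each subgroup $H \leq G$ to the same sequence, and read off the vanishing on the third term from the resulting long exact sequence.

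For the main equality, I would first observe that $A[G]$ is free as a right $\Fq[G]$-module on the basis $\{t^i\}_{i \geq 0}$, so that the functor $A[G]\otimes_{\Fq[G]}(-)$ is exact. Applying it to $0 \to L \to M \to N \to 0$, and then extending scalars from $A$ to $\F$ (which is flat), yields an exact sequence
$$0 \to L'_\F \to M'_\F \to N'_\F \to 0$$
of $\F[G]$-projective modules, the projectivity coming from Remark (i) after the definition. A direct check of the defining formula for $\tau_M$ shows that $M \mapsto \tau_M$ is natural in $A[G]$-module morphisms, so $\tau_{L,\F}$, $\tau_{M,\F}$ and $\tau_{N,\F}$ fit into a commutative square of vertical arrows over the above sequence. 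By Lemma \ref{43.4} each of these vertical arrows is an automorphism, and the additivity of $K_1$ on short exact sequences then produces the desired identity $c_G(M) = c_G(L) \cdot c_G(N)$ in $K_1(\F[G])$, and hence also in $K_1(\FiG)$ via the canonical map.

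The two consequences in the final sentence would then be formal. Well-definedness on isomorphism classes is a consequence of the functoriality of both $(-)'$ and $\tau$: any $A[G]$-isomorphism $M \xrightarrow{\sim} \widetilde M$ induces an $\F[G]$-isomorphism of automorphism pairs $(M'_\F, \tau_{M,\F}) \xrightarrow{\sim} (\widetilde M'_\F, \tau_{\widetilde M,\F})$, which gives the equality $c_G(M) = c_G(\widetilde M)$ in $K_1(\F[G])$. Compatibility with finite direct products is immediate by applying the multiplicativity just proved to the split exact sequence $0 \to M \to M \oplus N \to N \to 0$. The only genuine technical input in the whole argument is the classical additivity of $K_1$; the rest amounts to checking that our modules stay in the correct categories after applying the relevant functors, and this bookkeeping is what I expect to be the main (though minor) obstacle.
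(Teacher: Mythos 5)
Your proof is correct, and since the paper states this lemma without proof, your argument supplies exactly the standard verification that the authors implicitly assume: the two-out-of-three property via long exact Tate cohomology sequences, exactness of $A[G]\otimes_{\Fq[G]}(-)$ and flat base change to pass to a short exact sequence of finitely generated projective $\F[G]$-modules, naturality of $\tau_M$, and the classical additivity of $K_1$ on short exact sequences of automorphisms (valid here since $N'_{\F}$ is projective, so the sequence of modules splits and $\tau_{M,\F}$ becomes block upper-triangular). The only point worth making fully explicit in a write-up is that the two-out-of-three conclusion for $G$-cohomological triviality must be established \emph{before} invoking Remark (i) to guarantee that \emph{all three} of $L'_\F$, $M'_\F$, $N'_\F$ are $\F[G]$-projective, since the $K_1$ additivity step needs projectivity of all three terms, not just the two given ones.
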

\begin{proof} It is clear that the third module is finite and $G$-c.-t. Since $A[G]=\Fq[G][t]$ is a flat $\Fq[G]$-module, we obtain a short exact sequence $0\to L'\xrightarrow{1\otimes\alpha}M'\xrightarrow{1\otimes\beta}N'\to 0$, where we have denoted by $\alpha,\beta$ the given $A[G]$-homomorphisms.

Now, for any $h\in A[G]$ and $l\in L$, one has $$(1\otimes\alpha)(\tau_L(h\otimes l))=(th\otimes\alpha(l))-(h\otimes\alpha(t\cdot l))=(th\otimes\alpha(l))-(h\otimes t\cdot\alpha(l))=\tau_M((1\otimes\alpha)(h\otimes l))$$
and, similarly, for any $m\in M$, one has $(1\otimes\beta)(\tau_M(h\otimes m))=\tau_N((1\otimes\beta)(h\otimes m))$. The equality $c_G(M)=c_G(L)\cdot c_G(N)$ thus follows directly from the definition of $K_1(\F[G])$ (cf. \cite[Vol. II, Def. (38.28)]{curtisr} or \cite[Ch. 13]{swan}).
\end{proof}

Characteristic classes also satisfy the following base change property.
\begin{lemma}\label{subgroupcc} Let $H$ be a subgroup of $G$ and let $N$ be an $A[H]$-module that is both finite and $H$-c.-t. Let $M$ be the $A[G]$-module $\Fq[G]\otimes_{\Fq[H]}N$, with $tg(x\otimes n)=g x\otimes t\cdot n$ for each $g\in G$, $x\in \Fq[G]$ and $n\in N$.

Then $M$ is both finite and $G$-c.-t. and in $K_1(\F[G])$ one has
$\iota_H(c_H(N))\,=\,c_G(M)$, where $\iota_H$ is the canonical homomorphism $K_1(\F[H])\to K_1(\F[G])$.
\end{lemma}
\begin{proof} It is clear that $M$ is finite and, applying Lemma \ref{TMremark1}(i) twice, we find that $N$ is $\Fq[H]$-projective, so $M$ is $\Fq[G]$-projective, so $M$ is $G$-c.-t.

We now consider the commutative diagram of $A[G]$-modules
\begin{equation*}
\begin{CD}
A[G]\otimes_{A[H]}(A[H]\otimes_{\Fq[H]}N) @> 1\otimes\tau_{N} >>  A[G]\otimes_{A[H]}(A[H]\otimes_{\Fq[H]}N) \\
@V \phi VV @V \phi VV \\
A[G]\otimes_{\Fq[H]}N @> \sigma >>  A[G]\otimes_{\Fq[H]}N \\
@A \psi AA @AA \psi A \\
A[G]\otimes_{\Fq[G]}M @> \tau_{M} >>  A[G]\otimes_{\Fq[G]}M.
\end{CD}\end{equation*}
Here $\phi$ and $\psi$ are the canonical isomorphisms and for $x\in A[G]$ and $n\in N$ we set
$$\sigma(x\otimes n)\,:=\,(tx\otimes n)-(x\otimes t\cdot n).$$
The commutativity of the diagram is straightforward to verify, and implies that
\begin{align*}\iota_H(c_H(N))=\iota_H([N'_\F,\tau_{N,\F}])
=&[(A[G]\otimes_{A[H]}N')_\F,(1\otimes\tau_{N})_\F]\\
=&[(A[G]\otimes_{\Fq[H]}N)_\F,\sigma_\F]
=[M'_\F,\tau_{M,\F}]
=c_G(M).
\end{align*}
\end{proof}

\subsection{Refined Euler characteristics}
Let $G$ be a finite group. In this section we define refined Euler characteristics in the relative algebraic $K_0$-group $K_0(A[G],\F_\infty[G])$ of the ring inclusion $A[G]\subset\FiG$ (cf. \cite{swan,curtisr}). Elements of this group are equivalence classes $[P,\phi,Q]$ of triples comprising finitely generated projective $A[G]$-modules $P,Q$ and an isomorphism of $\FiG$-modules $\phi:P_{\Fi}\cong Q_{\Fi}$.
We always use additive notation for $K_0$-groups.

We write $D^{\rm p}(A[G])$ for the full triangulated subcategory 
of the derived category $D(A[G])$
comprising complexes that are perfect. We recall that an object of $D(A[G])$ is said to be \emph{perfect} if it is isomorphic (in $D(A[G])$) to a bounded complex of finitely generated, projective modules.

\begin{definition}\label{Dpc}{\em We write $D^{\rm p,c}(A[G],\Fi)$ for the full subcategory of $D^{\rm p}(A[G])$ comprising complexes $C$ with the property that $H^n(C)_{\Fi}$ is a (finitely generated) projective $\FiG$-module for every $n\in\ZZ$. (Here `c' stands for cohomology.)
}\end{definition}

\begin{definition}\label{ECex}{\em Let $C$ be an object of $D^{\rm p,c}(A[G],\Fi)$ that is isomorphic (in $D(A[G])$) to a complex of the form
\begin{equation}\label{representative}P^1\stackrel{d}{\longrightarrow} P^2\end{equation} with the first term in degree one, where both $P^1$ and $P^2$ are finitely generated, projective $A[G]$-modules. Assume also given an isomorphism of $\FiG$-modules $$\lambda\,:\,H^{2}(C)_{\Fi}\,\longrightarrow\,H^{1}(C)_{\Fi}.$$ 

The \emph{refined Euler characteristic} of the pair $(C,\lambda)$ is the element
$$\chi_{G}(C,\lambda)\,:=\,-\left[P^1,\gamma,P^2\right]$$ of $K_0\bigl(A[G],\FiG\bigr)$, where $\gamma$ is the composite isomorphism of $\FiG$-modules
$$P^1_{\Fi}\stackrel{\iota_1}{\longrightarrow}\im(d_{\Fi})\oplus H^1(C)_{\Fi}\xrightarrow{{\rm id}\oplus\lambda^{-1}}\im(d_{\Fi})\oplus H^2(C)_{\Fi}\xrightarrow{\iota_2^{-1}}P^2_{\Fi},$$
for an arbitrary choice of splittings $\iota_1$ and $\iota_2$ of the respective short exact sequences
\begin{align*}0\to H^1(C)_{\Fi}\longrightarrow &P^1_{\Fi}\xrightarrow{d_{\Fi}}\im(d_{\Fi})\to 0,\\
0\to \im(d_{\Fi})\longrightarrow &P^2_{\Fi}\longrightarrow H^2(C)_{\Fi}\to 0.\end{align*}
We note that Definition \ref{Dpc} ensures that the second sequence splits and combines with Lemma \ref{TMremark1}(i) to imply that $\im(d_{\Fi})$ is projective, so the first sequence splits. Also, strictly speaking, we have denoted $\iota_1:P^1_{\Fi}\to\im(d_{\Fi})\oplus H^1(C)_{\Fi}$ and $\iota_2:P^2_{\Fi}\to\im(d_{\Fi})\oplus H^2(C)_{\Fi}$ the isomorphisms induced by arbitrary choices of such splittings.
}\end{definition}

\begin{remark}{\em It is straightforward to prove directly (or to deduce from general arguments due to Burns \cite{ewt}) that $\chi_{G}(C,\lambda)$ does not depend on the choice of representative of $C$ of the form (\ref{representative}) or on the choice of splittings $\iota_1,\iota_2$.
}\end{remark}

\begin{remark}\label{ECRemark}{\em  
After restricting consideration to the subcategory $D^{\rm p,c}(A[G],\Fi)$ of $D^{\rm p}(A[G])$, one may mimic the use of Deligne's universal determinant functors \cite{delignedet} that is made by Breuning and Burns in \cite[Def. 5.5]{Breuning Burns} (even though $\FiG$ is not, in general, a regular ring).

In this way, to any pair $(C,\lambda)$, with $C$ in $D^{\rm p,c}(A[G],\Fi)$ and with 
$$\lambda\,:\,H^{\rm ev}(C)_{\Fi}\,\xrightarrow{\sim}\,H^{\rm od}(C)_{\Fi}$$
an isomorphism of $\FiG$-modules,
one associates a refined Euler characteristic in $K_0\bigl(A[G],\Fi[G]\bigr)$.
Here we have written
$H^{\rm ev}(C)$ and $H^{\rm od}(C)$ for the direct sum of the even and of the odd degree cohomology of $C$, respectively. This more general construction specialises to recover Definition \ref{ECex}.
}\end{remark}

\subsection{Statement of the refined class number formula}\label{Statement}

In this section we state our main result. Let $k$ be a finite separable extension of $\F=\Fq(t)$ and let $K$ be a finite Galois extension of $k$ with group $G:={\rm Gal}(K/k)$.

We endow $\Fi$ with the $t^{-1}$-adic absolute value, and $\FiG=\prod_{g\in G}(\Fi g)$ with the supremum norm. Using the ring isomorphism $\FiG\cong\Fq[G](\!(t^{-1})\!)$ to define a function $V:(\FiG\setminus\{0\})\to\ZZ$ by $V(\sum_{i\geq N}r_it^{-i}):=N$ for $r_i\in\Fq[G]$ with $r_N\neq 0$, one finds that the norm of $0\neq f=\sum_{g\in G}f_gg\in\FiG$ with $f_g\in\Fi$ is computed as
$${\max}_{g\in G}\{|f_g|\}=
{\max}_{g\in G}\{q^{-v_\infty(f_g)}\}=q^{-\min_{g\in G}\{v_\infty(f_g)\}}=q^{-V(f)}.$$
One easily verifies that this is a (sub-multiplicative) ring norm. The ring topology it induces is, in fact, the unique topology making $\FiG$ a Hausdorff $\Fi$-vector space.

We set $K_\infty:=\F_\infty\otimes_\F K$, a free topological $\Fi[G]$-module of rank $[k:\F]$. 
We say that a finitely generated $A[G]$-submodule $M$ of $K_\infty$ is an `$A[G]$-lattice in $K_\infty$' if the canonical map $M_{\Fi}\to K_\infty$ is bijective. An $A[G]$-submodule of $K_\infty$ that is $A$-free of finite rank is an $A[G]$-lattice if and only if it is discrete and co-compact in $K_\infty$ (cf. \cite[Prop. 1.6]{debry}).

\begin{remark}\label{TMremark2}{\em Claims (ii) and (iii) of Lemma \ref{TMremark1} combine to imply that any projective $A[G]$-lattice $M$ in $K_\infty$ is locally-free of constant local rank $n:=[k:\F]$, i.e., that $A_{(P)}\otimes_A M\cong A_{(P)}[G]^{n}$ for every prime $P$ of $A$.}
\end{remark}

\subsubsection{} For any commutative $\Fq$-algebra $\co$, we denote by $\tau$ the $q$-th power Frobenius of $\co$, that is, the endomorphism of $\co$ given by $\tau(x)=x^q$. The twisted polynomial ring $\co\{\tau\}$, with commutation relation $\tau\cdot x=x^q\cdot\tau$, acts on any commutative $\co$-algebra.

Let $E$ be a Drinfeld module defined over the integral closure $\co_k$ of $A$ in $k$. We interpret $E$ as a functor, from the category of $\co_k\{\tau\}[G]$-modules to the category of $A[G]$-modules, induced by a given $\Fq$-algebra morphism $\phi_E:A\to\co_{k}\{\tau\}$. (In this article, we only ever consider Drinfeld $A$-modules for $A:=\Fq[t]$ and defined over $\co_k$.)

The exponential power series of $E$ defines a homomorphism of $A[G]$-modules
$${\rm exp}_E:K_\infty\to E(K_\infty).$$
The main result of Taelman in \cite{TMA} implies both that $\expo$ is an $A[G]$-lattice in $K_\infty$ and that the $A[G]$-module
$$H(E/\co_K):=\frac{E(K_\infty)}{E(\co_K)+{\rm exp}_E(K_\infty)}$$
is finite.
The former assertion is an analogue of Dirichlet's theorem for the preimage under the archimedean exponential of the units of a number field. The analogy between $H(E/\co_K)$ and class groups is more elaborate, but see Remark 6 in loc. cit.

\subsubsection{}
We now use the notion of `taming module' that has been introduced by Ferrara, Green, Higgins and Popescu in \cite[Def. 1.3.2]{fghp}.
\begin{definition}\label{defTM}{\em A \emph{taming module} for $K/k$ is an $\co_k\{\tau\}[G]$-submodule $\cm$ of $\co_K$ that is $\co_k[G]$-projective and such that the quotient $\co_K/\cm$ is finite and only supported at prime ideals of $\co_k$ which are wildly ramified in $K/k$.
}\end{definition}

\begin{remark}\label{TMremark3}{\em Taming modules $\cm$ for $K/k$ exist and, in fact, are $\co_k[G]$-locally-free, of constant local rank 1. These claims follow from Prop. A.2.4 in loc. cit., upon replacing the use of Cor. A.1.7(3) by Swan's Theorem, as stated in Lemma \ref{TMremark1}(iii). Indeed, set $\mathcal{R}:=\co_k$ and $\mathcal{S}:=\co_K$. Then, if $v\in\Spec(\mathcal{R})$ is tamely ramified in $K/k$, the fact that $\mathcal{S}_{(v)}\cong\mathcal{R}_{(v)}[G]$, that is used in loc. cit., follows by Swan's Theorem from the isomorphism of $k[G]$-modules $k\otimes_{\mathcal{R}_{(v)}}\mathcal{S}_{(v)}\cong K\cong k[G]\cong k\otimes_{\mathcal{R}_{(v)}}\mathcal{R}_{(v)}[G]$.

In particular, for any taming module $\cm$ for $K/k$ and for every $\p\in\Spec(\co_k)$, the finite $\Fq[G]$-module $\cm/\p\cm$ is free, thus also $G$-c.-t. (by Lemma \ref{TMremark1}(i)), and the characteristic classes $c_G(\cm/\p\cm)$ and $c_G(E(\cm/\p\cm))$ are well-defined.
}\end{remark}

\begin{definition}{\em We fix a taming module $\cm$ for $K/k$.\

\noindent{}(i) The \emph{$\cm$-modified complex of units} of $(E,K/k)$ is the complex $C^{\cm}_{E,K/k}$ of $A[G]$-modules
$$K_\infty\,\,\xrightarrow{({\rm exp}_E,0)}\,\,\left(E(K_\infty)/E(\cm)\right)\,\oplus\,\cm$$ with the first term placed in degree one. 

\noindent{}(ii) The \emph{$\cm$-modified Taelman class group}  of $(E,K/k)$ is the $A[G]$-module
$$H(E/\cm):=\frac{E(K_\infty)}{E(\cm)+{\rm exp}_E(K_\infty)}.$$}
\end{definition}

\begin{remark}\label{regRK}{\em The term `complex of units' is taken from \cite{mornev}. It is clear that
$$H^j\left(C^{\cm}_{E,K/k}\right)=\begin{cases}\expm,\,\,\,\,\,\,\,\,\,\,\,\,\,\,\,\,\,\,\,\,\,\,\,\,\,\,\,\,\,\,\,\,\,\,\,\,\,\,\,\,j=1,\\ H(E/\cm)\oplus\cm,\,\,\,\,\,\,\,\,\,\,\,\,\,\,\,\,\,\,\,\,\,\,\,\,\,\,\,\,\,\,\,\,\,\,\,\,\,j=2,\\ 0,\,\,\,\,\,\,\,\,\,\,\,\,\,\,\,\,\,\,\,\,\,\,\,\,\,\,\,\,\,\,\,\,\,\,\,\,\,\,\,\,\,\,\,\,\,\,\,\,\,\,\,\,\,\,\,\,\,\,\,\,\,\,\,\,\,\,\,\,\,\,\,\,\,j\neq 1,2,\end{cases}$$
that $\expm$ is an $A[G]$-lattice in $K_\infty$ and that $H(E/\cm)$ 
is finite. 
We denote by $\lambda^\cm_{E,K/k}$ the isomorphism
\begin{equation*}\label{reg}H^1(C^{\cm}_{E,K/k})_{\Fi}=\expm_{\Fi}= K_\infty\cong\,(\co_K)_{\Fi}=\cm_{\Fi}= H^2(C^{\cm}_{E,K/k})_{\Fi},\end{equation*}
and by $\lambda^{\cm,-1}_{E,K/k}$ its inverse (which is thus induced by the inclusions $\cm\subseteq\co_K\subset K\subset K_\infty$, as claimed in Theorem \ref{thm:mainA}(3))}.
\end{remark}

\subsubsection{}
We may now state the main result of this article.
We recall that, since $\FiG$ is naturally a semilocal topological ring, 
$K_1(\FiG)$ is a topological abelian group, and we also use the canonical connecting homomorphism
\begin{equation*}\label{partial}\partial_G\,:\,K_1(\FiG)\to K_0(A[G],\FiG).\end{equation*}

\begin{theorem}\label{MT} Let $K$ be a finite Galois extension of $k$ and let $E$ be a Drinfeld module over $\co_k$. Let $\cm$ be a taming module for $K/k$.

Then the complex $C^{\cm}_{E,K/k}$ belongs to the category $D^{\rm p,c}(A[G],\Fi)$, the infinite product
$$\Theta^\cm_{E,K/k}:=\prod_{\p\in\Spec(\co_k)}\left(c_G\bigl(\cm/\p\cm\bigr)\cdot c_G\bigl(E(\cm/\p\cm)\bigr)^{-1}\right)$$
converges to a unique element of $K_1(\Fi[G])$, 
and in $K_0(A[G],\Fi[G])$ one has
\begin{equation}\label{Mequality}\partial_G\bigl(\Theta^\cm_{E,K/k}\bigr)\,=\,-\chi_{G}\bigl(C^\cm_{E,K/k},\lambda^{\cm,-1}_{E,K/k}\bigr).\end{equation}
\end{theorem}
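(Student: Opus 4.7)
My plan is to establish the three assertions in order: the perfectness of $C^\cm_{E,K/k}$, the convergence of the Euler product $\Theta^\cm_{E,K/k}$ in $K_1(\FiG)$, and the identity (\ref{Mequality}) in $K_0(A[G], \FiG)$.

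For the perfectness claim, I would first combine the projectivity of $\cm$ over $\co_k[G]$ (built into the definition of a taming module) with the fact that $\co_k$ is a finite locally free $A$-algebra to deduce that $\cm$ is projective as an $A[G]$-module. The cohomology computation of Remark \ref{regRK} then exhibits $H^1 = \expm$ as an $A[G]$-lattice in $K_\infty$ and $H^2 = H(E/\cm) \oplus \cm$ as an extension of a finite $A[G]$-module by a projective one. Standard homological arguments (via a finite projective presentation of $H^2$ together with lifting along the differential and along the inclusion $\expm \hookrightarrow K_\infty$) should produce a two-term representative $P^1 \to P^2$ by finitely generated projective $A[G]$-modules. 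After scalar extension to $\Fi$ both cohomology groups of $C^\cm_{E,K/k}$ become identified with $K_\infty$, which is a free $\FiG$-module, so the complex lies in $D^{\rm p,c}(A[G], \Fi)$ in the sense of Definition \ref{Dpc}.

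For convergence, each factor is well-defined: the taming property of Remark \ref{TMremark} guarantees that $\cm/\p\cm$ is a free $\Fq[G]$-module, while $E(\cm/\p\cm)$ agrees with $\cm/\p\cm$ as an $\Fq[G]$-module (only the $A$-action differing, being twisted through $\phi_E$), so both characteristic classes $c_G(\cm/\p\cm)$ and $c_G(E(\cm/\p\cm))$ are defined by Lemma \ref{43.4} and the remark following it. Convergence of the Euler product in $K_1(\FiG)$ will then follow by applying Corollary \ref{tfcor} to the global nuclear automorphism constructed, in analogy with the definition of $\tau_M$, from the difference between the $t$-action inherited from $A$ and the twisted $t$-action through the Drinfeld module $E$ on the $\FiG$-completion built from $\cm$. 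The same corollary will simultaneously identify the limit with a distinguished class in $K_1(\FiG)$ arising from the exponential map of $E$.

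The most delicate step is the identification (\ref{Mequality}). I would fix the two-term projective representative $P^1 \xrightarrow{d} P^2$ of $C^\cm_{E,K/k}$ produced above. By Definition \ref{ECex}, $-\chi_G(C^\cm_{E,K/k}, \lambda^{\cm,-1}_{E,K/k})$ equals the class $[P^1, \gamma, P^2]$ in $K_0(A[G], \FiG)$, where $\gamma$ is the explicit composite involving $\lambda^\cm_{E,K/k}$ and chosen splittings of the cohomology sequences over $\Fi$. On the other hand, the classical description of $\partial_G$ represents $\partial_G(\Theta^\cm_{E,K/k})$ by a triple built from any lift of the representing automorphism in $K_1(\FiG)$; the remaining task is therefore to show that the global nuclear automorphism delivered by the refined trace formula induces on the cohomology of $C^\cm_{E,K/k}$ over $\Fi$ the same isomorphism as $\lambda^\cm_{E,K/k}$, up to the sign accounted for by the minus signs on the two sides of (\ref{Mequality}). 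The main obstacle will be precisely this matching: in the non-equivariant case \cite{TAM} the analogous identification reduces to an equality of co-volumes in $\F_\infty^\times$, and in the abelian refinement \cite{fghp} to an equality of determinants in $\FiG^\times$, whereas in the general non-abelian setting no determinant map is available and the comparison must be carried out intrinsically in $K_1(\FiG)$, demanding careful tracking of the splittings, sign conventions (both in Definition \ref{ECex} and in the description of $\partial_G$), and the precise form of the global class produced by Corollary \ref{tfcor}, so that the two triples in $K_0(A[G], \FiG)$ visibly agree.
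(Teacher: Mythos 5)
Your outline for the first two assertions is essentially right. For perfectness, your plan mirrors the paper's Theorem \ref{PBT}: one constructs a two-term projective representative by pulling back along a surjection from a finitely generated free module onto a suitable finite quotient, and the point about both cohomology groups of $(C^\cm_{E,K/k})_{\Fi}$ being identified with $K_\infty$ is exactly how one lands in $D^{\rm p,c}(A[G],\Fi)$. For convergence, citing Corollary \ref{tfcor} is precisely what the paper does; the factorisation $(1-(Z\otimes\phi_E(t)))(1-(Z\otimes t))^{-1} = 1+\Phi_E$ of (\ref{pwfact}) is what makes the local factors coincide with quotients of characteristic classes, which you describe accurately.

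The gap is in your plan for the identity (\ref{Mequality}), where you correctly locate the difficulty but stop short of the mechanism that actually resolves it. You say the task is to show "the global nuclear automorphism delivered by the refined trace formula induces on the cohomology of $C^\cm_{E,K/k}$ over $\Fi$ the same isomorphism as $\lambda^\cm_{E,K/k}$," but the power series class $[1+\Phi_E\mid V_\infty^\cm]$ lives in $K_1(\Fq[G][\![Z]\!])$ and acts on the compact module $V_\infty^\cm$, not on the cohomology of a perfect complex, so there is no such induced isomorphism to compare; the relationship is indirect and is exactly what one must construct. The paper's solution, which is absent from your proposal, is to avoid a direct comparison in $K_0(A[G],\FiG)$ entirely and instead transport everything to $K_0(\Fq[G],\Fq[G][\![Z]\!])$: Proposition \ref{Braunling} establishes that the evaluation map $\evt^0\colon K_0(\Fq[G],\Fq[G][\![Z]\!])\to K_0(A[G],\FiG)$ is injective, Lemma \ref{limKN} identifies $K_0(\Fq[G],\Fq[G][\![Z]\!])$ with $\varprojlim_N K_0(\Fq[G],\Fq[G][\![Z]\!]/Z^N)$, and the actual matching is then done at every finite level $N$ via the auxiliary free lattices $M^1_N$, $M^2_N$ and the approximating matrices $\sigma_N$ (Lemma \ref{commonOL}, Lemma \ref{itsapowerseries}). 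None of the ingredients needed for this — the existence of a free lattice $M^1\supseteq\UU$ fitting into the nine-term diagram of Theorem \ref{POdiag}(i) (which is itself nontrivial, resting on Lemma \ref{admissible}), the $K$-theoretic Fitting class formalism of Lemma \ref{Fitindependence}, or the injectivity of $\evt^0$ — appears in your plan, and without them "careful tracking of splittings and sign conventions" has no leverage: you would be trying to compare two classes in $K_0(A[G],\FiG)$ with no intermediate object in which both are computable.
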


The proof of Theorem \ref{MT} is given in \S\ref{proofMT}.

\begin{remark}\label{convergencerk}{\em
By convergence of a countable, ordered infinite product in the topological abelian group $K_1(\FiG)$ we mean that the sequence of partial products converges. In Corollary \ref{tfcor} (see also Corollary \ref{convergencecor}(ii)) we prove that, for any given order on the countable set $\Spec(\co_k)$, the infinite product in Theorem \ref{MT} converges to a unique element $\Theta^\cm_{E,K/k}$ of $K_1(\Fi[G])$ and, moreover, that this limit element $\Theta^\cm_{E,K/k}$ is independent of the given order on $\Spec(\co_k)$.
}
\end{remark}

\begin{remark}\label{impartial}{\em
The map $\partial_G$ is defined by the equality $\partial_G([\Fi[G]^n, \phi]):=[A[G]^n,\phi,A[G]^n]$ for any $n\geq 1$ and any automorphism $\phi$ of $\FiG^n$, and fits in the exact localisation sequence
$$K_1(A[G])\longrightarrow K_1(\Fi[G])\stackrel{\partial_G}{\longrightarrow}K_0(A[G],\Fi[G]).$$
Although $\partial_G$ need not be surjective, the equality (\ref{Mequality}) takes place in the subgroup
$$\im(\partial_G)\,\cong\,K_1(\Fi[G])/\im(K_1(A[G])).$$

Now, if $G$ is abelian, then taking determinants over $\FiG$ induces an isomorphism
$$\im(\partial_G)\,\cong\,\Fi[G]^\times/A[G]^\times$$ and \cite[Prop. A.3.4]{fghp} shows that there is a decomposition
$$\Fi[G]^\times\cong\Fi[G]^+\times A[G]^\times.$$
Here $\Fi[G]^+$ is the subgroup of `monic elements' of $\FiG^\times$ in which their `equivariant Tamagawa number formula for Drinfeld modules' (Thm. 1.5.1 in loc. cit.) takes place.

Our proof of Theorem \ref{MT} makes clear that, for abelian extensions $K/k$ and under this identification of $\im(\partial_G)$ with $\Fi[G]^+$, the equality (\ref{Mequality}) specialises to recover the equivariant Tamagawa number formula of loc. cit.; see Remark \ref{forearlierref} below for details. In particular, (\ref{Mequality}) also specialises to recover Taelman's class number formula \cite[Thm. 1]{TAM} in the case $K=k$.
}\end{remark}

\begin{remark}\label{624}{\em It is straightforward to show that the validity of the equality (\ref{Mequality}) is \textit{a priori} independent of the choice of taming module $\cm$. As further discussed in \cite[Rk. 6.2.4]{fghp}, one should think of the above $\cm$-modifications as analogous to the usual $T$-modifications in the study of equivariant special Artin $L$-values for global fields.}\end{remark}

\begin{remark}\label{formofreprk}{\em We actually prove that the complex $C^{\cm}_{E,K/k}$ belongs to the category $D^{\rm p,c}(A[G],\Fi)$ and has a representative of the form (\ref{representative}). For $K=k$, this fact specialises to recover the main result of \cite{TMA}.}\end{remark}

\begin{remark}{\em The statement of Theorem \ref{MT} generalises in the obvious manner to abelian $t$-modules $E$ defined over $\co_k$, and our proof may be extended to this general setting. To avoid obscuring the main ideas of the present article, the details of this generalisation of Theorem \ref{MT} will be presented in a separate note. For now we recall that, under the assumption that the Galois group $G$ is abelian, Green and Popescu have recently proved an equivariant Tamagawa number formula for abelian $t$-modules \cite[Thm. 1.32]{greenpopescu} and Beaumont has proved an equivariant class formula for $z$-deformations of $t$-modules \cite[Thm. 1.2]{beaumont}.
}\end{remark}

\begin{example}\label{Carlitz}{\em We consider the Carlitz module $E=C$, defined by the morphism $\phi_C:A\to \co_k\{\tau\}$ given by $\phi_C(t):=t+\tau$. Given a prime $\p\in\Spec(\co_k)$ that is tamely ramified in $K/k$, we show that the Euler factor
$$c_G\bigl(\cm/\p\cm\bigr)\cdot c_G\bigl(C(\cm/\p\cm)\bigr)^{-1}=c_G\bigl(\co_K/\p\co_K\bigr)\cdot c_G\bigl(C(\co_K/\p\co_K)\bigr)^{-1}$$
of $\Theta^\cm_{C,K/k}$ at $\p$ is equal in $K_1(\F[G])$ to $[1-(N\p)^{-1}\Fr_\p\cdot e_{I_\p}]^{-1}$.
Here $I_\p$ and $\Fr_\p$ denote the restriction to $G$ of an inertia group and Frobenius automorphism for $\p$, respectively; we use the idempotent $e_{I_\p}:=|I_\p|^{-1}\sum_{g\in I_\p}g$ of $A[G]$; and the monic element $N\p$ of $A$ is defined by the equality $\bar\p^{f_\p}=N\p\cdot A$, where $\bar\p$ denotes the restriction of $\p$ and $f_\p:=[\co_k/\p:A/\bar\p]$. A corresponding interpretation of Euler factors of $\Theta^\cm_{E,K/k}$ for general Drinfeld modules $E$ will be proved in forthcoming work.

We regard the tamely ramified prime $\p$ as fixed and first claim that
\begin{equation}\label{2requiredEF1}c_G\left(\co_K/\p\co_K\right)=[N\p].
\end{equation}
We set $\kappa_{\bar\p}:=A/\bar\p$ and write $P$ for the monic generator of $\bar\p$. We fix a prime $\q$ of $K$ above $\p$, with completion $K_\q\supset\co_{K_\q}$ and decomposition subgroup $G_\p$ in $G$. Then 
\begin{align*}&\co_K/\p\co_K\cong\Fq[G]\otimes_{\Fq[G_\p]}\co_{K_\q}/\p\co_{K_\q}\cong\Fq[G]\otimes_{\Fq[G_\p]}(\co_{k}/\p)[G_\p]\\
\cong\: &\Fq[G]\otimes_{\Fq[G_\p]}(\kappa_{\bar\p}^{f_\p})[G_\p]\cong\Fq[G]\otimes_{\Fq[G_\p]}(\kappa_{\bar\p}[G_\p])^{f_\p}\cong(\Fq[G]\otimes_{\Fq[G_\p]}\kappa_{\bar\p}[G_\p])^{f_\p}.\end{align*} 
Thus Lemmas \ref{additivity} and \ref{subgroupcc} imply that (\ref{2requiredEF1}) is valid if
$c_{G_\p}{\left(\kappa_{\bar\p}[G_\p]\right)}=[P]$ in $K_1(\F[G_\p])$.

By definition, the left-hand side of the latter equality is the class of the action of $(t\otimes 1)-(1\otimes t)$ on $\F[G_\p]\otimes_{\Fq[G_\p]}\kappa_{\bar\p}[G_\p]$. To compute this action we write $d$ for the degree of $P={\sum}_{i=0}^{i=d}a_it^{i}$ and use the $A[G_\p]$-basis $\{1\otimes t^{i}:0\leq i\leq d-1\}$ of $A[G_\p]\otimes_{\Fq[G_\p]}\kappa_{\bar\p}[G_\p]$. Then the matrix of $(t\otimes 1)-(1\otimes t)$ is given by
\begin{equation*}\label{2horriblematrix}M_\tau:=
\begin{bmatrix}
t & -1 & 0 & 0 & \cdots & 0 & 0\\
0 & t & -1 & 0 & \cdots & 0 & 0\\
0 & 0 & t & -1 & \cdots & 0 & 0\\
0 & 0 & 0 & t & \ddots & 0 & 0\\
\vdots                & \vdots & \vdots & \ddots & \ddots & \vdots & \vdots\\
0 & 0 & 0 & 0 & \cdots & t & -1\\
a_0 & a_1 & a_2 & a_3 & \cdots & a_{d-2} & (a_{d-1}+t)
\end{bmatrix}.\end{equation*}
But it is straightforward to verify that $M_\tau$ is equal, up to multiplication by elementary matrices in $M_d(\F)$, to the diagonal matrix $D_P:={\rm diag}(1,1,\ldots,1,P)$. Thus $c_{G_\p}\left(\kappa_{\bar\p}[G_\p]\right)=[M_\tau]=[D_P]=[P]$, as required to prove (\ref{2requiredEF1}).

We next claim that 
\begin{equation}\label{2requiredEF2}c_G\bigl(C(\co_K/\p\co_K)\bigr)=[N\p-\Fr_\p\cdot e_{I_\p}].\end{equation} We assume, as we may, that $I_\p\subseteq G_\p$ and $\Fr_\p\in G_\p$, and we set $e:=|I_\p|$, $M:=K^{I_\p}$,
$L:=K^{G_\p}$, $J_\p:=G_\p/I_\p$, $\q_M:=\q\cap\co_M$, $\q_L:=\q\cap\co_L$ and $e'_{I_\p}=1-e_{I_\p}$. We note that $\co_{K_\q}\cong\co_{L_{\q_L}}[G_\p]$ is $G_\p$-c.t. by Lemma \ref{TMremark1}(i), that $\q^{a}\co_{K_\q}\cong\co_{K_\q}$ for any $a\geq 0$, and thus that $\q^{a}/\q^{b}\cong\q^{a}\co_{K_\q}/\q^{b}\co_{K_\q}$ is $G_\p$-c.-t. for any $b\geq a\geq 0$.

Since $C(\co_K/\p\co_K)\cong\Fq[G]\otimes_{\Fq[G_\p]}C(\co_{K}/\q^{e})$ and $$N\p-\Fr_\p\cdot e_{I_\p}=(e_{I_\p}(N\p-\Fr_\p)+e'_{I_\p})\cdot(e_{I_\p}+e'_{I_\p}N\p),$$ to deduce (\ref{2requiredEF2}) from Lemmas \ref{additivity} and \ref{subgroupcc}, it is enough to show that
\begin{equation}\label{2requiredEFGP}c_{G_\p}\bigl(C(\co_K/\q)\bigr)=[e_{I_\p}(N\p-\Fr_\p)+e'_{I_\p}]\,\,\,\,\,\,\text{ and }\,\,\,\,\,\,c_{G_\p}\bigl(C(\q/\q^{e})\bigr)=[e_{I_\p}+e'_{I_\p}N\p].
\end{equation}

To consider the first equality we note that $J_\p$ is abelian (in fact, cyclic) and apply \cite[Prop. A.5.1(3)]{fghp} to the extension $M/L$ and the unramified prime $\q_L$. Together with Prop. A.4.1 in loc. cit. (or with Lemma \ref{Fitchar} below), this gives $\det_{\F[J_\p]}(c_{J_\p}(C(\co_M/\q_L\co_M)))=N\p-\Fr_{\p}I_\p$ in $\F[J_\p]^\times$. We abbreviate $\tau_M$ for $M=C(\co_K/\q)$ to $\tau$. Then, since $I_\p$ acts trivially on
$$\co_K/\q\cong\co_M/\q_M=\co_M/\q_L\co_M,$$
it follows that $c_{G_\p}(C(\co_K/\q))=[C(\co_K/\q)'_\F,\tau_\F]$ is mapped to
\begin{align*}&(\det_{\F[J_\p]}([\F[J_\p]\otimes_{\F[G_\p]}C(\co_K/\q)'_\F,1\otimes\tau_\F]),1)\\ 
=\: &(\det_{\F[J_\p]}([\F[J_\p]\otimes_{\Fq[J_\p]}C(\co_M/\q_L\co_M),\tau_\F]),1)=(N\p-\Fr_{\p}I_\p,1)
\end{align*}
under the composite isomorphism
\begin{align*}K_1(\F[G_\p])\cong K_1(e_{I_\p}\F[G_\p])\times K_1(e'_{I_\p}\F[G_\p])\cong K_1(\F[J_\p])\times & K_1(e'_{I_\p}\F[G_\p])\\ \stackrel{\det}{\cong}\F[J_\p]^\times\times & K_1(e'_{I_\p}\F[G_\p]).\end{align*}
Since the same is true of $[e_{I_\p}(N\p-\Fr_\p)+e'_{I_\p}]$, the first equality in (\ref{2requiredEFGP}) is valid.

To consider the second equality in (\ref{2requiredEFGP}), we note that for all $n\geq 1$, $x\in\q^n$ and $a\in A$, $(\phi_C(a))(x)$ is congruent to $ax$ modulo $\q^{qn}$. In particular, $C(\q^n/\q^{n+1})\cong\q^n/\q^{n+1}$ so, by induction on $e$, one can deduce from Lemma \ref{additivity} that $c_{G_\p}(C(\q/\q^{e}))=c_{G_\p}(\q/\q^{e})$.

To compute $c_{G_\p}(\q/\q^{e})$ we note that $\co_K/\q^{e}=\co_K/\q_L\co_K\cong(\co_L/\q_L)[G_\p]$ and that $e_{I_\p}(\co_K/\q^{e})\cong(\co_L/\q_L)[J_\p]$ surjects onto $e_{I_\p}(\co_K/\q)=\co_K/\q$. Since the latter two $(\co_L/\q_L)$-spaces have dimension $|J_\p|$, in fact we have
$$\co_K/\q\cong e_{I_\p}(\co_L/\q_L)[G_\p]=e_{I_\p}(\co_k/\p)[G_\p]\cong(e_{I_\p}\kappa_{\bar\p}[G_\p])^{f_\p}.$$
Now, (\ref{2requiredEF1}) applied to the extension $K/L$ gives $c_{G_\p}(\co_K/\q^{e})=c_{G_\p}(\co_K/\q_L\co_K)=[N\p]$. Set $\kappa:=\kappa_{\bar\p}[G_\p]$ and $f:=f_\p$. Writing $I_d$ for the identity matrix, the argument used to prove (\ref{2requiredEF1}) shows that
\begin{align*}c_{G_\p}(e_{I_\p}\kappa)=\:&[(e_{I_\p}\kappa)'_\F,\tau_{e_{I_\p}\kappa,\F}]=[(e_{I_\p}\kappa)'_\F,\tau_{e_{I_\p}\kappa,\F}]\cdot[(e'_{I_\p}\kappa)'_\F,1]\\ 
=\:&[\kappa'_\F,e_{I_\p}\tau_{\kappa,\F}\oplus e'_{I_\p}{\rm id}_{\kappa'_\F}]=[e_{I_\p}M_\tau+e'_{I_\p}I_d],\end{align*}
and $e_{I_\p}M_\tau+e'_{I_\p}I_d$ is equal, up to multiplication by elementary matrices in $M_d(\F[G_\p])$, to the diagonal matrix $D'_P:={\rm diag}(1,1,\ldots,1,e_{I_\p}P+e'_{I_\p})$. Using Lemma \ref{additivity} we have $$c_{G_\p}(\co_K/\q)=c_{G_\p}(e_{I_\p}\kappa)^{f}=[e_{I_\p}M_\tau+e'_{I_\p}I_d]^{f}=[D'_P]^{f}=[e_{I_\p}P+e'_{I_\p}]^{f}=[e_{I_\p}N\p+e'_{I_\p}].$$ The second equality in (\ref{2requiredEFGP}) now follows from Lemma \ref{additivity} as
\begin{align*}c_{G_\p}(C(\q/\q^{e})) = c_{G_\p}(\q/\q^{e}) = c_{G_\p}(\co_K/\q^{e})\cdot c_{G_\p}(\co_K/\q)^{-1} =\: &[N\p]\cdot[e_{I_\p}N\p+e'_{I_\p}]^{-1}\\ =\:&[e_{I_\p}+e'_{I_\p}N\p].\end{align*}

Finally, (\ref{2requiredEF1}) and (\ref{2requiredEF2}) combine to give our claim
$$c_G\bigl(\co_K/\p\co_K\bigr)\cdot c_G\bigl(C(\co_K/\p\co_K)\bigr)^{-1}=[N\p]\cdot [N\p-\Fr_\p\cdot e_{I_\p}]^{-1}=[1-(N\p)^{-1}\Fr_\p\cdot e_{I_\p}]^{-1}.$$
}\end{example}

\section{A construction of non-abelian Stickelberger elements}\label{3}

In this section, we restrict attention to finite Galois extensions $K/k$ with the property that the corresponding group rings 
decompose as finite direct sums of matrix rings over commutative rings. In this setting, we will construct non-abelian Stickelberger elements for Drinfeld modules and state their relations to the Galois module structure of Taelman class groups, that are encoded within Theorem \ref{MT}. 

In particular, we obtain generalisations of the `refined Brumer-Stark Theorem for Drinfeld modules' \cite[Thm. 1.5.5]{fghp} of Ferrara, Green, Higgins and Popescu, and of Theorem A of Angl\`es and Taelman in \cite{at}.

In the general case, even defining suitable notions of reduced determinants and of non-commutative Fitting ideals will require new and rather involved algebraic techniques. We will return to these problems in future work.

\subsection{\texorpdfstring{Reduced determinants and non-commutative Fitting ideals}{Reduced determinants and non-commutative Fitting ideals}}

We fix a finite group $G$ with the property that $\ell$ does not divide the order of the commutator subgroup $G'$ of $G$. Equivalently, $G$ has an abelian $\ell$-Sylow subgroup $L$ and a normal $\ell$-complement $N$, and thus admits a semidirect product decomposition
\begin{equation}\label{semidirect}G\cong N\rtimes L.\end{equation}
(Indeed, if $\ell\nmid|G'|$ then any $\ell$-Sylow subgroup $L$ is isomorphic to the direct factor $LG'/G'$ of the abelian group $G/G'$, and the $\ell$-complement $\ker(G\to G/G'\to LG'/G')$ is normal. Conversely, $G'$ must be contained in $N$, so $\ell\nmid|G'|$.)

\subsubsection{}\label{311} If $F$ is an algebraically closed field of characteristic $\ell$, then the proof of \cite[Part 2, \S 6, Lem. 1.10, pp. 231-233]{Passman} gives a canonical decomposition of $F[G]$ as a finite direct sum of matrix rings over commutative $F$-algebras. DeMeyer and Janusz \cite[Thm. 3.1]{dMJ} then use this fact to prove that for any field $F$ of characteristic $\ell$, $F[G]$ is an Azumaya algebra. However, their proof is non-constructive, in that it does not provide information on how to decompose $F[G]$ as a sum of matrix rings.

In this section, we explain how to extend the argument of \cite[Part 2, \S 6, Lem. 1.10, pp. 231-233]{Passman} to give a corresponding canonical decomposition of $\Fl[G]$.

We fix a decomposition (\ref{semidirect}) of $G$. Then $G$ acts on the set of central primitive idempotents of $\Fl[N]$ by conjugation. Let $\{\co_i:1\leq i\leq m\}$ be the set of orbits of this action. For each $i$, fix an element $e_i\in\co_i$. Then the Wedderburn decomposition of $e_i\Fl[N]$ is of the form 
\begin{equation}\label{eiWD}e_i\Fl[N]\,\cong\, M_{m_i}(\Fl(i))\end{equation}
for some $m_i\in\NN$ and some finite field extension $\Fl(i)/\Fl$ (note that $\Fl(i)$ is indeed a field by Wedderburn's little theorem). We also define an abelian $\ell$-group
$$L_i:=\{\lambda\in L:\lambda e_i \lambda^{-1}=e_i\},$$
the centraliser of $e_i$ in $L$.

\begin{prop}\label{NEWfiniteproducts} There is a canonical isomorphism
$$\Fl[G]\,\cong\,{\bigoplus}_{i=1}^{i=m}M_{m_i\cdot|\co_i|}(\Fl(i)[L_i]).$$
\end{prop}
\begin{remark}{\em Given an index $1\leq i\leq m$, different choices of idempotent $e_i\in\co_i$ give rise to canonically isomorphic summands $M_{m_i\cdot|\co_i|}(\Fl(i)[L_i])$. The summand corresponding to the orbit $\{e_N\}$ for $e_N:=|N|^{-1}\sum_{\nu\in N}\nu$ is $M_{1}(\Fl[L])=\Fl[L]$.
If $\ell\nmid|G|$, Proposition \ref{NEWfiniteproducts} recovers the Wedderburn decomposition $\Fl[G]=\Fl[N]\cong{\bigoplus}_{i=1}^{i=m}M_{m_i}(\Fl(i))$. If $G$ is abelian, it simply gives $\Fl[G]=\Fl[N][L]\cong{\bigoplus}_{i=1}^{i=m}\Fl(i)[L]$.}
\end{remark}

In the rest of section \S \ref{311} we prove Proposition \ref{NEWfiniteproducts}.
For each $1\leq i\leq m$ we set $f_i:=\sum_{e\in\co_i}e\in\Fl[N]$ and observe that this idempotent is central in $\Fl[G]$. Since $\sum_{i=1}^{i=m}f_i=1$, we have
\begin{equation}\label{fdecomp}\Fl[G]\,=\,{\bigoplus}_{i=1}^{i=m}f_i\Fl[G].\end{equation}

We next fix $i$ and let
$$G_i:=\{g\in G:ge_ig^{-1}=e_i\}\,\supseteq\, N$$
be the centraliser of $e_i$ in $G$.
We then apply \cite[Part 2, \S 6, Lem. 1.7, pp. 228,229]{Passman} to $G$, with $H$ taken to be $N$, $K$ taken to be $\Fl$, $e_1$ taken to be $e_i$ and $e$ taken to be $f_i$, to obtain the following result.

\begin{lemma}\label{17} There is a canonical isomorphism
$$M_{|\co_i|}(e_i\Fl[G_i])\,\cong\, f_i\Fl[G]$$
that maps a matrix $(s_{kj})_{1\leq k,j\leq |\co_i|}$ to $\sum_{1\leq k,j\leq |\co_i|}s_{kj}(g_k^{-1}e_ig_j)$, where the elements $g_l\in G$ are chosen so that $g_1=1$ and $$\co_i=\{g_l^{-1}e_ig_l:1\leq l\leq|\co_i|\}.$$
\end{lemma}

The proof of Proposition \ref{NEWfiniteproducts} is now completed upon combining (\ref{fdecomp}) with Lemma \ref{17} and with the following result.

\begin{lemma}\label{18} The isomorphism (\ref{eiWD}) induces a canonical isomorphism (see (\ref{15}))
$$e_i\Fl[G_i]\,\cong\, M_{m_i}(\Fl(i)[L_i]).$$
\end{lemma}
\begin{proof} Because $e_i$ is a central idempotent of $\Fl[G_i]$, $R:=e_i\Fl[G_i]$ is a ring with identity element $e_i$ and with
$$R\supseteq e_i\Fl[N]\stackrel{(\ref{eiWD})}{\cong} M_{m_i}(\Fl(i)).$$
Let $E_{kj}$ be the matrix with 1 in position $(k,j)$ and zeros elsewhere, and let $\{e_{kj}\}_{1\leq k,j\leq m_i}\subseteq e_i\Fl[N]$ be their preimages through (\ref{eiWD}). We note that $e_{11}e_i=e_{11}$, and hence that
$e_{11}Re_{11}=e_{11}(e_i\Fl[G_i])e_{11}=e_{11}\Fl[G_i]e_{11}$.
Because $$e_{kj}e_{ab}=\begin{cases}0,\,\,\,\,\,\,\,\,\,\,\,j\neq a,\\ e_{kb},\,\,\,\,\,\,j=a,\end{cases}$$ and $e_i=e_{11}+\ldots+e_{m_i m_i}$, \cite[Part 2, \S 6, Lem. 1.5, p. 227]{Passman} shows that the map
\begin{align}\label{15}M_{m_i}(e_{11}\Fl[G_i]e_{11})=M_{m_i}(e_{11}Re_{11})&\cong R=e_i\Fl[G_i],\\
\notag(s_{kj})_{1\leq k,j\leq m_i}&\mapsto{\sum}_{1\leq k,j\leq m_i}s_{kj}e_{kj},\end{align} is a ring isomorphism.
Therefore, it suffices to show that $e_{11}\Fl[G_i]e_{11}\cong\Fl(i)[L_i]$.

Next we observe that the map $L_i=G_i\cap L\to G_i\to G_i/N$ is bijective. Then, the argument used to prove \cite[Part 2, \S 6, Lem. 1.8, pp. 229-230]{Passman}, with $G$ taken to be $G_i$, $H$ taken to be $N$ and $e$ taken to be $e_i$, shows that $e_{11}\Fl[G_i]e_{11}$ is a twisted group ring, in the sense of Part 1, \S 2 of loc. cit., of the group $L_i$ over the field
$$e_{11}\Fl[N]e_{11}=e_{11}(e_i\Fl[N])e_{11}\stackrel{(\ref{eiWD})}{\cong}E_{11}(M_{m_i}(\Fl(i)))E_{11}\cong\Fl(i).$$
Indeed, although for a given group $G$, normal subgroup $H$, field $K$ and idempotent $e\in K[H]$ central in $K[G]$, the statement of this result assumes that $eK[H]\cong M_m(K)$ for some $m$, and the relevant argument then shows that $e_{11}K[G]e_{11}$ is a twisted group ring of $G/H$ over $e_{11}K[H]e_{11}=e_{11}(eK[H])e_{11}\cong E_{11}(M_m(K))E_{11}\cong K$; every step works in an identical manner upon using the isomorphism (\ref{eiWD}) instead. 

To finally complete the proof of the lemma, we now recall that $L_i$ is an $\ell$-group and $\Fl(i)$ is a perfect field. Thus \cite[Part 1, \S 2, Lem. 2.10, p. 20]{Passman} implies that any twisted group ring of $L_i$ over $\Fl(i)$ is necessarily equal to the group ring $\Fl(i)[L_i]$.
\end{proof}

\subsubsection{}

We now fix a commutative (and associative) $\mathbb{F}_\ell$-algebra $B$ and, in the notation of Proposition \ref{NEWfiniteproducts}, for each index $1\leq i\leq m$, we define a commutative $B$-algebra $$R_i:=B \otimes_{\mathbb{F}_\ell}\Fl(i)[L_i].$$
We note that $R_i$ is finitely generated as a $B$-module. Setting also $n_i:=m_i\cdot|\co_i|$, from Proposition \ref{NEWfiniteproducts} we then derive a canonical decomposition
\begin{equation}\label{matrixdecomp}B[G]\,\cong\,\bigoplus_{i=1}^m M_{n_i}(R_i).\end{equation}
From this we derive a decomposition
$Z(B[G])\cong\bigoplus_{i=1}^m R_i$
and, for any $a,b\geq 1$, also
$$M_{a\times b}(B[G])\,\cong \bigoplus_{i=1}^m M_{(n_i\cdot a)\times (n_i\cdot b)}(R_i).$$
For any element $x$ of either of the above rings, let us write $x_i$ for the $i$-component of its image in the right-hand side. We denote by $e_{11}^{i}\in M_{n_i}(R_i)$ the matrix with 1 in position $(1,1)$ and zeros elsewhere, and regard it as an element of $B[G]$ through (\ref{matrixdecomp}).

We may now provide definitions of reduced determinants and of non-commutative Fitting ideals, the latter of which we follow \cite[Def. 1]{jn} of Johnston and Nickel for.

\begin{definition}\label{rdfidefs}{\em \

\noindent{}(i) Let $H$ be a square matrix over $B[G]$. The \emph{reduced determinant} of $H$ is
$$\Nrd_{B[G]}(H)\,:=\,(\det_{R_i}(H_i))_{1\leq i\leq m}\,\in\, Z(B[G]).$$

\noindent{}(ii) Let $M$ be a finitely presented $B[G]$-module. The \emph{non-commutative Fitting ideal} of $M$ is the ideal of $Z(B[G])$ given by
$$\Fit_{B[G]}(M)\,:=\,\bigoplus_{i=1}^m\Fit_{R_i}(e_{11}^{i}M).$$
}\end{definition}

\begin{remark}{\em If each $n_i$ is equal to 1, these recover the usual definitions of determinants and of Fitting ideals over $\bigoplus_{i=1}^mR_i=B[G]$. We have chosen the notation $\Nrd_{B[G]}$ to emphasize the fact that, if $B$ is a field and $\ell\nmid|G|$, so that $G=N$, then reduced determinants also coincide with the usual reduced norms over the semisimple ring $B[N]$. We also refer the reader to \cite[Thm. 2.2, Prop. 2.4, Prop. 3.4]{jn} for the basic properties of non-commutative Fitting ideals.}\end{remark}

\begin{remark}\label{Nrdforendomorphisms}{\em Let $P$ be a finitely generated, projective $B[G]$-module, and let $\Phi$ be a $B[G]$-endomorphism of $P$. Then the reduced determinant
$$\Nrd_{B[G]}(\Phi)\,:=\,\Nrd_{B[G]}\left(H(\Phi\oplus{\rm id}_Q)\right)\,\in\, Z(B[G])$$
of $\Phi$ is easily seen to be independent of the choice of $B[G]$-module $Q$ with the property that $P\oplus Q$ is free of finite rank, as well as of the choice of $B[G]$-basis with respect to which the matrix $H(\Phi\oplus{\rm id}_Q)$ of the endomorphism $\Phi\oplus{\rm id}_Q$ of $P\oplus Q$ is computed.

Given $B'\supseteq B$, the compatibility of the respective decompositions of the form (\ref{matrixdecomp}) implies that $\Nrd_{B'[G]}(B'\otimes_B \Phi)=\Nrd_{B[G]}(\Phi)$. Non-commutative Fitting ideals satisfy an analogous base-change property (cf. \cite[Thm. 2.2(vii)]{jn}).
}\end{remark}

In the next result, we set $SK_1(R_i):=\{x\in K_1(R_i)\mid\,\det_{R_i}(x)=1\}$.

\begin{lemma}\label{passestoK} Taking reduced determinants induces a 
split short exact sequence of abelian groups
$$1\to\bigoplus_{i=1}^m SK_1(R_i)\longrightarrow K_1(B[G])\xrightarrow{\Nrd_{B[G]}} Z(B[G])^\times\to 1.$$
If $B$ is a (commutative) local ring,
then the first term vanishes and $\Nrd_{B[G]}$ is an isomorphism. 
\end{lemma}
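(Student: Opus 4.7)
The plan is to exploit the matrix decomposition (\ref{matrixdecomp}) so as to reduce the entire computation to the familiar case of $K_1$ of commutative rings. First I would apply $K_1$ to this decomposition: since $K_1$ converts finite direct products of rings into direct sums of abelian groups, and since Morita invariance supplies natural isomorphisms $K_1(M_{n_i}(R_i))\cong K_1(R_i)$, one obtains
$$K_1(B[G])\;\cong\;\bigoplus_{i=1}^m K_1(R_i).$$
By unwinding the definition of $\Nrd_{B[G]}$ for matrices in $M_a(B[G])\cong\bigoplus_i M_{a n_i}(R_i)$, one then checks that under this isomorphism and the identification $Z(B[G])^\times\cong\bigoplus_{i=1}^m R_i^\times$, the reduced determinant corresponds precisely to the direct sum of the componentwise determinants $\det_{R_i}\colon K_1(R_i)\to R_i^\times$.

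With this reduction in hand, it suffices to produce, for each commutative ring $R_i$, a canonical split short exact sequence
$$1\to SK_1(R_i)\to K_1(R_i)\xrightarrow{\det_{R_i}} R_i^\times\to 1.$$
Exactness in the middle is the very definition of $SK_1(R_i)$; surjectivity of $\det_{R_i}$ holds because any $u\in R_i^\times$ is the determinant of the $1\times 1$ matrix $(u)$; and a canonical splitting is provided by the homomorphism $u\mapsto [R_i,u\cdot(-)]$ sending $u$ to the class of multiplication-by-$u$ on the free module $R_i$. Taking the direct sum of these split exact sequences over $i$ produces the sequence in the statement.

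For the final assertion I would invoke the fact, recorded in Lemma \ref{finiteproducts}, that each $R_i$ is finitely generated as a $B$-module. When $B$ is either a field or a discrete valuation ring, and hence in particular a commutative local ring, each such $R_i$ is therefore commutative and semilocal. A commutative semilocal ring decomposes as a finite product of commutative local rings, so the vanishing of $SK_1(R_i)$ reduces to the classical statement that $SK_1$ of a commutative local ring is trivial; this in turn follows by the Whitehead lemma combined with the observation that any diagonal matrix of determinant $1$ over a commutative ring is a product of elementary matrices.

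The main obstacle I anticipate is essentially bookkeeping: one has to verify carefully that the $\Nrd_{B[G]}$ defined via the decomposition (\ref{matrixdecomp}) agrees, under the composed Morita and product isomorphisms, with the direct sum of the ordinary determinants $\det_{R_i}$. Once this compatibility is established, every remaining step is a direct appeal to standard properties of $K_1$ of commutative and semilocal rings.
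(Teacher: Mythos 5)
Your argument for the split short exact sequence is correct and coincides with the paper's approach: both reduce, via Morita equivalence and the decomposition~(\ref{matrixdecomp}), to the commutative rings $R_i$, check that $\Nrd_{B[G]}$ corresponds componentwise to $\det_{R_i}$, and split by $u\mapsto [R_i,\,u\cdot(-)]$.

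There is, however, a genuine gap in your proof of the final assertion. You claim that every commutative semilocal ring decomposes as a finite product of commutative local rings, and thereby reduce the vanishing of $SK_1(R_i)$ to the local case. This structural claim is false: a commutative semilocal ring can be a connected (even integral) domain with several maximal ideals, and the rings $R_i$ arising in this paper can exhibit exactly this behaviour. For instance, take $\ell=2$, $G=\mathbb{Z}/3\mathbb{Z}$ and $B=\mathbb{F}_2[X]_{(X^2+X+1)}$, a discrete valuation ring whose residue field is $\mathbb{F}_4$ but which does not itself contain $\mathbb{F}_4$ (since $\mathbb{F}_2$ is algebraically closed in $\mathbb{F}_2(X)$). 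Here $\mathbb{F}_2[G]\cong\mathbb{F}_2\times\mathbb{F}_4$, so one of the $R_i$ is $B\otimes_{\mathbb{F}_2}\mathbb{F}_4$, which is the localisation of $\mathbb{F}_4[X]$ at the complement of the union of the two primes $(X-\omega)$ and $(X-\omega^2)$: a semilocal domain with exactly two maximal ideals, hence certainly not a product of local rings. What is actually needed, and what the paper obtains from the cited results in Curtis and Reiner, is the classical theorem that $SK_1$ vanishes for an arbitrary commutative semilocal ring (one shows $\GL_n(R)=E_n(R)\cdot\GL_1(R)$ for all $n$, so that the determinant is injective on $K_1$). Once you replace the false decomposition claim by a direct appeal to this theorem, the remainder of your argument goes through.
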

\begin{proof}
It suffices to prove the case $m=1$. In this case, the first claim follows from the Morita equivalence between $B[G]\cong M_{n_1}(R_1)$ and $R_1$ and the second claim from Prop. (5.28) in Volume I and Prop. (45.12) in Volume II of \cite{curtisr}.    
\end{proof}

\begin{lemma}\label{ctsNrd}Let $Z(\FiG)^\times\subseteq\FiG^\times$ have the subspace topology. Then the isomorphism $\Nrd_{\FiG}:K_1(\FiG)\to Z(\FiG)^\times$ is continuous.
\end{lemma}
\begin{proof} Each $R_i:=\Fi\otimes_{\mathbb{F}_\ell}\Fl(i)[L_i]$ is a finite dimensional $\Fi$-space, and so is the right-hand side of (\ref{matrixdecomp}). We endow it with the supremum ring norm, which defines the unique Hausdorff $\Fi$-space topology. Since $\FiG$ is also a Hausdorff $\Fi$-space, the ring isomorphism (\ref{matrixdecomp}) is an isomorphism of topological rings. The same claim is true of the restriction $Z(\Fi[G])\cong\bigoplus_{i=1}^m R_i$ of (\ref{matrixdecomp}).
The respective induced maps $\Fi[G]^\times\cong\bigoplus_{i=1}^m \GL_{n_i}(R_i)$ and $Z(\Fi[G])^\times\cong\bigoplus_{i=1}^m R_i^\times$ are isomorphisms of topological groups. 

Now, for each $i$, $\det_{R_i}:\GL_{n_i}(R_i)\to R_i^\times$ is continuous, since it may be computed through sums and products on the topological ring $R_i$. Thus $\bigoplus_{i=1}^m \det_{R_i}:\Fi[G]^\times\to Z(\Fi[G])^\times$ is continuous. Since the latter map factors as the composition of $\Nrd_{\FiG}$ with the open quotient map $\FiG^\times\to K_1(\FiG)$, $\Nrd_{\FiG}$ is continuous, as claimed.
\end{proof}

We make explicit the relationship between non-commutative Fitting ideals and characteristic classes.
\begin{lemma}\label{Fitchar} Let $M$ be an $A[G]$-module that is both finite and $G$-c.t. Then $$\Fit_{A[G]}(M)=Z(A[G])\cdot{\rm Nrd}_{\F[G]}(c_G(M)).$$ In particular, ${\rm Nrd}_{\F[G]}(c_G(M))$ belongs to $Z(A[G])$ and $\Fit_{A[G]}(M)$ is principal.
\end{lemma}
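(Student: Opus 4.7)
The plan is to reduce the statement to a componentwise computation via the matrix decomposition of $A[G]$, and then invoke the standard local formula for the zeroth Fitting ideal of the cokernel of an endomorphism of a finitely generated projective module over a commutative ring.

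First, I would apply Lemma \ref{finiteproducts} with $B=A$ to fix a decomposition $A[G]\cong\bigoplus_{i=1}^m M_{n_i}(R_i)$, which induces $Z(A[G])=\bigoplus_{i=1}^m R_i$ and, by scalar extension to $\F$, also $\F[G]\cong\bigoplus_{i=1}^m M_{n_i}(S_i)$ with $S_i:=\F\otimes_A R_i$. Since $M$ is finite and $G$-c.-t., it is $\Fq[G]$-projective, so $M'=A[G]\otimes_{\Fq[G]}M$ is a finitely generated projective $A[G]$-module, and Lemma \ref{43.4} presents $M$ as the cokernel of the injective $A[G]$-endomorphism $\tau_M$ of $M'$ (which becomes an automorphism after tensoring with $\F$). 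In each component $i$, Morita equivalence identifies $M'$ with the finitely generated projective $R_i$-module $Q_i:=e_{11}^i M'$, identifies $\tau_M$ with an $R_i$-endomorphism $\tau_{M,i}$ of $Q_i$, and identifies $e_{11}^i M$ with $Q_i/\tau_{M,i}(Q_i)$.

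Next, I would compute both sides componentwise. By the definition of the reduced determinant for endomorphisms of projective modules (Remark \ref{Nrdforendomorphisms}) together with Morita equivalence, the $i$-component of $\Nrd_{\F[G]}(c_G(M))$ is the $S_i$-determinant of $\tau_{M,i}\otimes_{R_i}S_i$, which by base change equals $\det_{R_i}(\tau_{M,i})\in R_i$. This already yields the subsidiary assertion that $\Nrd_{\F[G]}(c_G(M))$ lies in $Z(A[G])$. On the other hand, the $i$-component of $\Fit_{A[G]}(M)$ is, by definition, $\Fit_{R_i}(Q_i/\tau_{M,i}(Q_i))$.

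The remaining step, and the only technical point, is to verify that for any finitely generated projective $R_i$-module $Q_i$ and any endomorphism $\phi$ that becomes an automorphism after base change to $S_i$, the zeroth Fitting ideal $\Fit_{R_i}(\cok(\phi))$ equals the principal ideal $R_i\cdot\det_{R_i}(\phi)$. Since both sides commute with localization on the commutative ring $R_i$ and $Q_i$ is Zariski-locally free, one reduces to the case where $Q_i$ is free, where $\phi$ is represented by a square matrix and the equality is the classical definition of the zeroth Fitting ideal. Assembling the components then yields $\Fit_{A[G]}(M)=Z(A[G])\cdot\Nrd_{\F[G]}(c_G(M))$, from which principality is immediate. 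The main obstacle is organisational rather than conceptual: keeping the Morita and base-change compatibilities straight in each matrix component; no deeper input is required.
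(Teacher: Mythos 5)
Your argument is correct and follows essentially the same route as the paper's proof: reduce to the matrix components of $A[G]$, compute both $\Nrd_{\F[G]}(c_G(M))$ and $\Fit_{A[G]}(M)$ componentwise via the presentation of Lemma \ref{43.4}, and match the determinant against the Fitting ideal. The only cosmetic difference is that the paper first adds a projective complement $Q$ so that $M'\oplus Q$ is $A[G]$-free and handles the Morita compatibility of determinants by citing \cite[Lem.\ 2.3]{jn}, whereas you work directly with the projective $R_i$-modules $e_{11}^iM'$ and reduce the Fitting-ideal computation to the free case by localization.
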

\begin{proof}
The $A[G]$-module $M'$ is finitely generated and projective (by Lemma \ref{TMremark1}(i)), so we may and do fix an $A[G]$-module $Q$ with the property that $M'\oplus Q$ is free of finite rank, say $n$.
By Lemma \ref{43.4}, there is then a short exact sequence
$$0\to M'\oplus Q\xrightarrow{\tau_M\oplus{\rm id}}M'\oplus Q\longrightarrow M\to 0.$$

Writing $(\tau_M\oplus{\rm id})_i$ for the endomorphism of $M_{n_i}(R_i)^{n}$ induced by the restriction of $\tau_M\oplus{\rm id}$ and an arbitrary choice of $A[G]$-basis of $M'\oplus Q$, we then find that
\begin{align*}\Fit_{A[G]}(M)=&{\bigoplus}_{i=1}^m\left(R_i\cdot\det_{R_i}(e^{i}_{11}(\tau_M\oplus{\rm id}))\right)\\
=&{\bigoplus}_{i=1}^m\left(R_i\cdot\det_{R_i}((\tau_M\oplus{\rm id})_i)\right)\\
=&Z(A[G])\cdot{\rm Nrd}_{\F[G]}(\left[(M'\oplus Q)_\F,(\tau_M\oplus{\rm id})_\F\right])\\ =&Z(A[G])\cdot{\rm Nrd}_{\F[G]}(c_G(M)).\end{align*}
Here the second equality holds by \cite[Lem. 2.3]{jn}.
\end{proof}

One may finally deduce the following explicit relationship between non-commutative Fitting ideals and refined Euler characteristics. Since we do not use it in the sequel, we omit the proof. 

\begin{corollary}\label{FIREC} Let $M$ be an $A[G]$-module that is both finite and $G$-c.t.
Then an element $x$ of $Z(\FiG)^\times$ satisfies \begin{equation*}\label{CElemma}\partial_{G}\left(\Nrd_{\FiG}^{-1}(x)\right)\,=\,-\chi_{G}(M[-2],0)\end{equation*} in $K_0(A[G],\FiG)$ if and only if it belongs to $Z(A[G])$ and generates $\Fit_{A[G]}(M)$.
\end{corollary}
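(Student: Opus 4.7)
The plan is to compute $\chi_G(M[-2],0)$ explicitly in terms of the characteristic class $c_G(M)$, and then to translate the asserted equality into an equivalence about $x$ modulo the reduced determinant of $c_G(M)$, which is computed by Lemma \ref{Fitchar}.

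First, Lemma \ref{43.4} provides a $2$-term projective resolution $0\to M'\xrightarrow{\tau_M} M'\to M\to 0$ (note that $M'$ is $A[G]$-projective because the finite $G$-c.-t.\ module $M$ is $\Fq[G]$-projective). Placing these terms in degrees $1$ and $2$, the complex $M'\xrightarrow{\tau_M}M'$ is a perfect representative of $M[-2]$. Scalar extension to $\Fi$ kills both cohomology groups since $M$ is finite, so $M[-2]\in D^{\rm p,c}(A[G],\Fi)$ and Definition \ref{ECex} applies with $\lambda=0\colon 0\to 0$. The splittings reduce to $\iota_1=\tau_{M,\Fi}$ (an isomorphism since $\ker\tau_{M,\Fi}=H^1(C)_{\Fi}=0$) and $\iota_2=\id_{M'_\Fi}$, so $\gamma=\tau_{M,\Fi}$ and
\[\chi_G(M[-2],0)\,=\,-[M',\tau_{M,\Fi},M']\,=\,-\partial_G(c_G(M)),\]
where the last equality is the formula defining $\partial_G$ on classes of automorphisms of finitely generated projective modules.

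The stated equality therefore reads $\partial_G\bigl(\Nrd_{\FiG}^{-1}(x)\cdot c_G(M)^{-1}\bigr)=0$, and by the localisation sequence for $A[G]\subset\FiG$ this holds if and only if $\Nrd_{\FiG}^{-1}(x)\cdot c_G(M)^{-1}\in\im(K_1(A[G])\to K_1(\FiG))$. Since $\Fi$ is a field, Lemma \ref{passestoK} makes $\Nrd_{\FiG}$ an isomorphism; combining this with the functoriality of reduced determinants under $A[G]\hookrightarrow\FiG$ (Remark \ref{Nrdforendomorphisms}) and with the surjectivity of $\Nrd_{A[G]}\colon K_1(A[G])\twoheadrightarrow Z(A[G])^\times$ from the same lemma, the condition becomes $x\cdot\Nrd_{\FiG}(c_G(M))^{-1}\in Z(A[G])^\times$, viewed inside $Z(\FiG)^\times$. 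Lemma \ref{Fitchar} identifies $\Nrd_{\FiG}(c_G(M))$ with an element of $Z(A[G])$ generating $\Fit_{A[G]}(M)$, so this condition is in turn equivalent to $x\in Z(A[G])$ generating $\Fit_{A[G]}(M)$: the hypothesis $x\in Z(\FiG)^\times$ guarantees that $x$ is a non-zero-divisor in $Z(A[G])$, so any two generators of this principal ideal must differ by a unit of $Z(A[G])$. The point requiring most care is the precise identification $\Nrd_{\FiG}(\im K_1(A[G]))=Z(A[G])^\times$ inside $Z(\FiG)^\times$, which is what bridges the $K$-theoretic computation with the Fitting-ideal statement.
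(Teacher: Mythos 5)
The paper states explicitly ``Since we do not use it in the sequel, we omit the proof,'' so there is no paper proof to compare against; I can only assess your argument on its own merits, and it is correct and complete. The computation $\chi_G(M[-2],0)=-[M',\tau_{M,\Fi},M']=-\partial_G(c_G(M))$ is right: with $H^1(C)_\Fi=H^2(C)_\Fi=0$ the splittings $\iota_1,\iota_2$ in Definition~\ref{ECex} collapse to $\tau_{M,\Fi}$ and $\id$, and the extension of $\partial_G$ to classes of automorphisms of projective (rather than free) modules is standard. The pivotal reduction to $x\cdot\Nrd_{\FiG}(c_G(M))^{-1}\in Z(A[G])^\times$ then uses three facts you invoke correctly: (a) $\Nrd_{\FiG}$ is an isomorphism since $\Fi$ is a field (Lemma~\ref{passestoK}); (b) the square relating $\Nrd_{A[G]}$, $\Nrd_{\FiG}$ and the scalar-extension maps commutes (Remark~\ref{Nrdforendomorphisms} applied via Bass surjectivity); and (c) $\Nrd_{A[G]}:K_1(A[G])\to Z(A[G])^\times$ is surjective (Lemma~\ref{passestoK} for $B=A$, a PID, which need not be a DVR but the surjectivity does not require that). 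Together, (a)--(c) give exactly the identification $\Nrd_{\FiG}(\im K_1(A[G]))=Z(A[G])^\times$ you flag as the delicate point, including the \emph{reverse} implication (injectivity of $\Nrd_{\FiG}$ is needed to lift a preimage). Finally, the passage from ``$x/f$ is a unit of $Z(A[G])$'' to ``$x$ generates $\Fit_{A[G]}(M)$'' via Lemma~\ref{Fitchar} is handled correctly in both directions, using that $x$ (equivalently $f=\Nrd_{\FiG}(c_G(M))$) is a non-zero-divisor in $Z(A[G])$ because $Z(A[G])$ embeds in $Z(\FiG)$ where both are invertible.
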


\subsection{Galois structure of Taelman class groups}

In this section we will consider finite Galois extensions $K/k$ that satisfy the following:

\begin{As}\label{assumptionSD}{\em $\ell$ does not divide the degree of $K$ over the maximal abelian subextension of $k$ in $K$.
}\end{As}

The Galois group $G:=\Gal(K/k)$ then admits a decomposition of the form (\ref{semidirect}), which induces (compatible) decompositions (\ref{matrixdecomp}) for the group rings $A[G]$ and $\Fi[G]$.

\begin{example}\label{TheExample}{\em Let $\ell=q=2$ and $k=\mathbb{F}_2(t)$, and let $D/\mathbb{F}_2[t]$ be the Drinfeld module defined by the morphism $\phi_D:\mathbb{F}_2[t]\to\mathbb{F}_2[t]\{\tau\}$ given by $\phi_D(t):=t+\tau+\tau^2$. Then the extension $K_{D,t}:=k\left(D[t]\right)$ generated by the $t$-torsion points on $D$ is Galois with ${\rm Gal}(K_{D,t}/k)\cong S_3$, so $K_{D,t}$ has degree 3 over the maximal abelian subextension of $k$ in $K_{D,t}$ (cf. \cite[Exam. 3.5.6]{Papikian}).
}\end{example}

\subsubsection{}
We now fix a Drinfeld module $E/\co_k$ and a taming module $\cm$ for $K/k$. 

\begin{definition}{\em The \emph{$\cm$-modified Stickelberger element} of $(E,K/k)$ is
$$\theta^{\cm}_{E,K/k}\,:=\,{\rm Nrd}_{\F_\infty[G]}\left(\Theta^{\cm}_{E,K/k}\right)\,\in\,Z(\F_\infty[G])^\times.$$}
\end{definition}

We set $\UU_{E,K}^\cm:=\expm$.

\begin{theorem}\label{mtII} Let $M^1$ be any projective $A[G]$-lattice in $K_\infty$ that contains $\UU_{E,K}^\cm$ and fits into an exact commutative diagram as in claim (i) of Theorem \ref{POdiag} below.

Then the set
\begin{equation}\label{M1set}\left\{\theta^{\cm}_{E,K/k}\cdot {\rm Nrd}_{\FiG}\left(\psi_{\Fi}\circ \lambda^{\cm,-1}_{E,K/k}\right)\Bigm\vert\,\psi\in\Hom_{A[G]}\left(M^1,\cm\right)\right\}\end{equation}
is contained in $Z(A[G])$ and moreover is contained in the $Z(A[G])$-ideal
$\Fit_{A[G]}\left(H(E/\cm)\right)$.

In particular, the set (\ref{M1set}) is also contained in the ideals ${\rm Ann}_{Z(A[G])}\left(H(E/\cm)\right)$, $\Fit_{A[G]}\left(H(E/\co_K)\right)$ and ${\rm Ann}_{Z(A[G])}\left(H(E/\co_K)\right)$ of $Z(A[G])$.
\end{theorem}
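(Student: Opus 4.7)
The plan is to deduce the theorem from Theorem~\ref{MT} (the refined class number formula) together with the explicit projective presentation of $H(E/\cm)$ that should be furnished by the commutative diagram of Theorem~\ref{POdiag}(i). Throughout I will use that, since $\Fi$ is a field, Lemma~\ref{passestoK} provides an isomorphism $\Nrd_{\FiG}\colon K_1(\FiG)\xrightarrow{\sim} Z(\FiG)^\times$, under which $\Theta^{\cm}_{E,K/k}$ maps to $\theta^{\cm}_{E,K/k}$.

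First I would use the diagram in Theorem~\ref{POdiag}(i) to extract a representative of $C^{\cm}_{E,K/k}$ of the form $M^1 \xrightarrow{d} P^2$ in $D^{\mathrm{p},\mathrm{c}}(A[G],\Fi)$, with $P^2$ a projective $A[G]$-module built from $\cm$, together with a distinguished $A[G]$-homomorphism $\psi_0\colon M^1\to \cm$ supplied by the diagram. Unwinding Definition~\ref{ECex}, I would then compute $\chi_G(C^{\cm}_{E,K/k},\lambda^{\cm,-1}_{E,K/k})$ explicitly as a class $-[M^1,\gamma,P^2]$ in which the $\FiG$-automorphism $\gamma$ of $P^2_{\Fi}$ is built from $\psi_{0,\Fi}$, $d_{\Fi}$ and $\lambda^{\cm,-1}_{E,K/k}$. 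Combining this with the equality of Theorem~\ref{MT} and passing to reduced determinants produces a formula for $\theta^{\cm}_{E,K/k}$, valid modulo $\Nrd_{\FiG}(A[G]^\times)\subseteq Z(A[G])^\times$.

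Second, for any $\psi\in\Hom_{A[G]}(M^1,\cm)$, I would combine this formula with the multiplicativity of reduced determinants (cf.\ Remark~\ref{Nrdforendomorphisms}) to show that $\theta^{\cm}_{E,K/k}\cdot\Nrd_{\FiG}(\psi_{\Fi}\circ\lambda^{\cm,-1}_{E,K/k})$ equals, up to a unit in $Z(A[G])^\times$, the reduced determinant of an explicit $A[G]$-endomorphism $\widetilde{\psi}$ of a projective $A[G]$-module whose cokernel presents $H(E/\cm)$ (possibly up to a projective summand). By Lemma~\ref{Fitchar} applied to this presentation, or equivalently by the general theory of non-commutative Fitting ideals in~\cite{jn}, the resulting element lies in $\Fit_{A[G]}(H(E/\cm))$. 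Varying $\psi$ and multiplying by $Z(A[G])$ produces precisely the ideal appearing in~(\ref{M1set}), and gives the first asserted containment.

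The remaining containments are standard consequences of general properties of non-commutative Fitting ideals: the surjection $H(E/\cm)\twoheadrightarrow H(E/\co_K)$ induced by the inclusion $\cm\subseteq \co_K$ gives $\Fit_{A[G]}(H(E/\cm))\subseteq\Fit_{A[G]}(H(E/\co_K))$, while the containment $\Fit_{A[G]}(M)\subseteq\mathrm{Ann}_{Z(A[G])}(M)$ for any finite $A[G]$-module $M$ appears as \cite[Thm.~2.2]{jn}. I expect the main obstacle to be the first step: obtaining the explicit representative of $C^{\cm}_{E,K/k}$ from Theorem~\ref{POdiag}(i) and then carefully tracking, through the splittings in Definition~\ref{ECex}, the interaction between the auxiliary $\cm$-summand in $H^2(C^{\cm}_{E,K/k})$ and the trivialization $\lambda^{\cm,-1}_{E,K/k}$, so that the resulting formula in $Z(\FiG)^\times$ takes the precise form claimed in~(\ref{M1set}).
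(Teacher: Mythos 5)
Your overall strategy — deducing the containment from the identity of Theorem~\ref{MT}, passing to reduced determinants via Lemma~\ref{passestoK}, and then invoking the projective presentation of $H(E/\cm)$ coming from Theorem~\ref{POdiag}(i) together with the properties of non-commutative Fitting ideals in~\cite{jn} — is the correct one and matches the paper's. However, there are two related gaps that would block the argument as you describe it.

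First, there is no ``distinguished $A[G]$-homomorphism $\psi_0\colon M^1\to\cm$ supplied by the diagram'' of Theorem~\ref{POdiag}(i): the module $\cm$ does not appear in that diagram at all, and globally there need be no canonical (nor even any) isomorphism between the projective $A[G]$-lattices $M^1$ and $\cm$, only local isomorphisms at each prime of $A$. Second, and more seriously, passing from the $K_0$-equality of Theorem~\ref{MT} to a statement of the form ``$\theta^{\cm}_{E,K/k}\cdot\Nrd_{\FiG}(\psi_{\Fi}\circ\lambda^{-1})$ equals, up to a unit in $Z(A[G])^\times$, the reduced determinant of an explicit endomorphism'' requires knowing that $\Nrd_{\FiG}$ maps $\ker(\partial_G)=\im(K_1(A[G]))$ onto \emph{all} of $Z(A[G])^\times$. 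This is not clear over $A[G]$ itself, since $A=\Fq[t]$ is neither a field nor a DVR and the groups $SK_1(R_i)$ in the decomposition of $A[G]$ could interfere; Lemma~\ref{passestoK} only guarantees surjectivity when the base is a field or discrete valuation ring. The paper circumvents both problems simultaneously by first reducing, via \cite[Thm.~2.2(iii),(vii)]{jn}, to proving the analogous equality $Z(\B_\p)\cdot\theta^{\cm}_{E,K/k}\cdot R(\phi)=\Fit_{\B_\p}(M^2_\p)$ after localizing at each prime $\p$ of $A$: over $\B_\p=A_{(\p)}[G]$, local freeness of rank one lets one fix an isomorphism $\phi\colon M^1_\p\cong\cm_\p$, and $\Nrd_{\FiG}(\ker\partial_{G,\p})=Z(\B_\p)^\times$ holds because $A_{(\p)}$ is a DVR. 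Your proposal omits this localization step entirely, and without it the ``up to a unit in $Z(A[G])^\times$'' claim, and hence the conclusion, is not justified. You would also want to run the Euler-characteristic computation through the explicit pull-back representative $P^1\to P^2\oplus\cm$ of Theorem~\ref{PBT} rather than a representative of the form $M^1\to P^2$, since the latter is not directly provided and the summand $\cm$ in degree two is exactly what makes the trivialization $\lambda^{\cm,-1}_{E,K/k}$ interact with the Fitting ideal of $M^2$ via the presentation~(\ref{presentationh}).
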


\begin{remark}\label{BScomp}{\em Fix any finite abelian extension $K$ of $k$, any Drinfeld module $E/\co_k$ and any taming module $\cm$ for $K/k$. Then, by construction, any `$E(K_\infty)/E(\cm)$-admissible' $A[G]$-lattice as in the `refined Brumer-Stark Theorem for Drinfeld modules' of \cite[Thm. 1.5.5]{fghp} fits into an exact commutative diagram as in claim (i) of Theorem \ref{POdiag} below. See Lemma \ref{admissible} for the details. 

For any such abelian extension,
Theorem \ref{mtII} therefore specialises to recover the refined Brumer-Stark Theorem of loc. cit.
Indeed, $\Nrd_{\FiG}:K_1(\FiG)\cong Z(\FiG)^\times$ is given by taking determinants if $G$ is abelian and, as explained in Remark \ref{forearlierref} below, $\theta^{\cm}_{E,K/k}$ coincides with the element $\Theta_{K/k}^{E,\cm}(0)$ defined on \cite[p. 2221]{fghp}. 

Meanwhile, $M^1$ may be taken to be a free $A[G]$-lattice (of rank $n:=[k:\F]$) by Theorem \ref{POdiag}. In this case, the factor $[\cm:M^1]_G^{-1}$ (as in Def. 4.1.4 of loc. cit.) is given by $\det(Y)\cdot{\rm Nrd}_{\F[G]}(c_G(\cN/\cm))$, where $\cN$ is an arbitrary free $A[G]$-lattice in $K_\infty$ that contains $\cm$ and $Y\in{\rm GL}_n(\FiG)$ is the transition matrix between arbitrary $A[G]$-bases $(e_i)_{1\leq i\leq n}$ of $M^1$ and $(e'_i)_{1\leq i\leq n}$ of $\cN$. To express this factor in the form ${\rm Nrd}_{\FiG}(\psi_{\Fi}\circ \lambda^{\cm,-1}_{E,K/k})$, one can define $\psi:M^1\to\cm$ as mapping a basis element $e_i$ to ${\rm Nrd}_{\F[G]}(c_G(\cN/\cm))\cdot e'_i$; indeed, by Lemma \ref{Fitchar}, ${\rm Nrd}_{\F[G]}(c_G(\cN/\cm))$ is a generator of the (classical) Fitting ideal $\Fit_{A[G]}(\cN/\cm)\subseteq{\rm Ann}_{A[G]}(\cN/\cm)$, so the latter term belongs to $\cm$.
}\end{remark}

\begin{remark}\label{finerexpectation}{\em Fix $M^1$ as in Theorem \ref{mtII} and abbreviate the $Z(A[G])$-ideal generated by (\ref{M1set}) to $I(M^1)$. Then we in fact prove that
\begin{equation}\label{doubleinclusions}\Fit_{A[G]}\left(M^1/\UU_{E,K}^\cm\right)\cdot\Fit_{A[G]}\left(H(E/\cm)\right)\,\subseteq\, I(M^1)\,\subseteq\, \Fit_{A[G]}\left(H(E/\cm)\right).
\end{equation}
}\end{remark}

\begin{remark}\label{n-tremark}{\em
By Remark \ref{TMremark2}, any $A[G]$-modules $M^1$ and $\cm$ in Theorem \ref{mtII} are locally-free of rank equal to $[k:\F]$. For every injective $\psi$, the reduced determinant in (\ref{M1set}) is invertible in $Z(\FiG)$. Moreover, the set (\ref{M1set}) is amenable to explicit computation in examples.
}\end{remark}

The proof of Theorem \ref{mtII} (and Remark \ref{finerexpectation}) is given in \S\ref{7}.

\subsubsection{}\label{322} In this section we make the result of Theorem \ref{mtII} explicit in some special cases.

\begin{remark}\label{rank1reg}{\em To construct an element of the set (\ref{M1set}) we assume that $K/k$ satisfies Assumption \ref{assumptionSD} and also, for simplicity, is tamely ramified at all maximal ideals of $\co_k$.
We set $n:=[k:\F]$, omit $\cm=\co_K$ from all notation and fix the following data.
\begin{itemize}
\item A free $A[G]$-lattice $L$ in $K_\infty$ containing $\UU_{E,K}$ and a basis $\underline{b}=\{b_j\}_{1\leq j\leq n}$ of $L$.
\item An element $a\in A\setminus\{0\}$ that annihilates $H(E/\co_K)$.
\item An $n$-tuple $\underline{\alpha}=(\alpha_i)_{1\leq i\leq n}$ in $\co_K$.
\end{itemize}
We set $c_j:=a^{-1}\cdot b_j\in L_{\F}$ and $\underline{c}:=\{c_j\}_{1\leq j\leq n}$. We then set $R_{\underline{c},\underline{\alpha}}^{E}:=(r_{i,j})_{1\leq i,j\leq n}$ in $M_n(\FiG)$, where
$r_{i,j}\in\FiG$ is the unique element defined by the equality
$$1\otimes\alpha_i\,=\,\sum_{j=1}^{j=n}r_{i,j}\cdot c_j$$
in $(\co_K)_{\Fi}\cong K_\infty= L_{\Fi}=\bigoplus_{j=1}^{j=n}\Fi[G]\cdot b_j=\bigoplus_{j=1}^{j=n}\Fi[G]\cdot c_j$.

By the argument of \cite[Prop. 4.2.3]{fghp}, the free $A[G]$-lattice $M^1:=\bigoplus_{j=1}^{j=n}A[G]\cdot c_j$ contains $\UU_{E,K}$ and fits into an exact commutative diagram as in claim (i) of Theorem \ref{POdiag} below.
We define $\psi\in\Hom_{A[G]}(M^1,\co_K)$ by setting $\psi(c_i):=\alpha_i$ for each $i$. Then
$$(\lambda_{E,K/k}^{-1}\circ\psi_{\Fi})(c_i)=\lambda_{E,K/k}^{-1}(\alpha_i)=\sum_{j=1}^{j=n}r_{i,j}\cdot c_j$$
so
$${\rm Nrd}_{\FiG}(\psi_{\Fi}\circ \lambda^{-1}_{E,K/k})={\rm Nrd}_{\FiG}(\lambda_{E,K/k}^{-1}\circ\psi_{\Fi})={\rm Nrd}_{\FiG}(R_{\underline{c},\underline{\alpha}}^{E}).$$
By Theorem \ref{mtII}, the product $\theta_{E,K/k}\cdot{\rm Nrd}_{\FiG}(R_{\underline{c},\underline{\alpha}}^{E})$ belongs to $\Fit_{A[G]}\left(H(E/\co_K)\right)$.
}\end{remark}

\begin{example}\label{CarlitzStick}{\em We assume the notation and hypotheses of Remark \ref{rank1reg} and also the notation of Example \ref{Carlitz}. In particular, we consider the Carlitz module $E=C$, defined by the morphism $\phi_C:A\to \co_k\{\tau\}$ with $\phi_C(t):=t+\tau$. Then the computation of Euler factors given in Example \ref{Carlitz} combines with Lemma \ref{ctsNrd} to give the equality
\begin{align*}\label{CSelement}\theta_{C,K/k}=&\Nrd_{\FiG}\bigl({\prod}_{\p\in\Spec(\co_k)}c_G\bigl(\co_K/\p\co_K\bigr)\cdot c_G\bigl(C(\co_K/\p\co_K)\bigr)^{-1}\bigr)\\ =&{\prod}_{\p\in\Spec(\co_k)}{\rm Nrd}_{\F[G]}(1-(N\p)^{-1}\Fr_\p\cdot e_{I_\p})^{-1}\end{align*}
in $Z(\Fi[G])^\times$. Therefore, by Theorem \ref{mtII},
\begin{equation*}\label{CSFitting}(\prod_{\p\in\Spec(\co_k)}{\rm Nrd}_{\F[G]}(1-(N\p)^{-1}\Fr_\p\cdot e_{I_\p})^{-1})\cdot{\rm Nrd}_{\FiG}(R_{\underline{c},\underline{\alpha}}^C)\in\Fit_{A[G]}\left(H(C/\co_K)\right).\end{equation*}
}\end{example}

\subsubsection{}

In the special case of finite Galois extensions $K$ of $k$ of degree not divisible by $\ell$, one has $\cm=\co_K$ and may also take $M^1=\UU_{E,K}^{\co_K}=\expo$. For such extensions, we thus immediately derive from (\ref{doubleinclusions}) the following explicit equality.

\begin{corollary}\label{mtIII} Let $K/k$ be a finite Galois extension of degree not divisible by $\ell$ and let $E/\co_k$ be a Drinfeld module.
Then
$$Z(A[G])\cdot \left\{\theta^{\co_K}_{E,K/k}\cdot {\rm Nrd}_{\FiG}\left(\varphi_{\Fi}\circ \lambda^{\co_K,-1}_{E,K/k}\right)\Bigm\vert\,\varphi\in\Hom_{A[G]}\left(\expo,\co_K\right)\right\}$$ is equal to $\Fit_{A[G]}\left(H(E/\co_K)\right)$.
\end{corollary}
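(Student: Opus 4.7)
The plan is to derive Corollary \ref{mtIII} as a direct consequence of Theorem \ref{mtII} and the sharper inclusion recorded in Remark \ref{finerexpectation}. First I would check the compatibility of our setup under the hypothesis $\ell\nmid [K:k]$. Wild ramification at any prime of $\co_k$ forces the inertia subgroup to have order divisible by $\ell$, so no prime of $\co_k$ is wildly ramified in $K/k$. By the discussion in \S \ref{12}, the only taming module for $K/k$ is then $\cm=\co_K$, so we may legitimately drop the superscript $\cm$ throughout.

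Next I would argue that $M^1:=\UU_{E,K}^{\co_K}=\expo$ is an admissible choice in Theorem \ref{mtII}. By the result of Taelman recalled at the beginning of \S \ref{Statement}, $\UU_{E,K}^{\co_K}$ is an $A[G]$-lattice in $K_\infty$. Since $\ell\nmid |G|$, every finitely generated $A[G]$-module that is $A$-projective is automatically $G$-cohomologically trivial and hence, by Remark \ref{TMremark}(i), projective as an $A[G]$-module. Thus $M^1$ is a projective $A[G]$-lattice in $K_\infty$. Trivially $M^1$ contains $\UU_{E,K}^{\co_K}$ (with equality), so the exact commutative diagram of Theorem \ref{POdiag}(i) is satisfied in the degenerate case in which the quotient $M^1/\UU_{E,K}^{\co_K}$ vanishes.

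I would then apply the double inclusion of Remark \ref{finerexpectation} with this choice of $M^1$, which asserts
\[
\Fit_{A[G]}\!\left(M^1/\UU_{E,K}^{\co_K}\right)\cdot\Fit_{A[G]}\!\left(H(E/\co_K)\right)\,\subseteq\, I(M^1)\,\subseteq\, \Fit_{A[G]}\!\left(H(E/\co_K)\right),
\]
where $I(M^1)$ denotes the ideal displayed in (\ref{M1set}). Because $M^1/\UU_{E,K}^{\co_K}=0$, its non-commutative Fitting ideal is all of $Z(A[G])$, so the leftmost product collapses to $\Fit_{A[G]}(H(E/\co_K))$ and the two inclusions force equality throughout. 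Finally, the Hom set appearing in the definition of $I(M^1)$ is exactly $\Hom_{A[G]}(\expo,\co_K)$, matching the statement of the corollary, so the resulting identity is precisely the claim.

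The only real point to verify carefully is that $M^1=\UU_{E,K}^{\co_K}$ genuinely qualifies as an admissible lattice in the sense of Theorem \ref{POdiag}(i); once this degenerate case is accepted, the corollary is a formal consequence of the sharper form of Theorem \ref{mtII} stated in Remark \ref{finerexpectation}, with no further calculation required.
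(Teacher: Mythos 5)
Your proposal is correct and matches the paper's own (very brief) derivation: both specialize the double inclusion of Remark \ref{finerexpectation} to the choice $M^1 = \UU_{E,K}^{\co_K}$, using $\cm = \co_K$ and $\Fit_{A[G]}(0) = Z(A[G])$ to collapse the two inclusions to an equality.

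The one point you flag but do not fully discharge --- that $M^1 = \UU_{E,K}^{\co_K}$ actually fits into the exact commutative diagram of Theorem \ref{POdiag}(i), i.e.\ that the second row $0 \to M^2 \to V_E \to K_\infty/\UU \to 0$ exists with $M^2$ an $A[G]$-submodule of $V_E$ --- does require an argument: one needs the short exact sequence $0 \to K_\infty/\UU \xrightarrow{\exp_E} V_E \to H \to 0$ to split $G$-equivariantly, so that $M^2 = s(H)$ is an $A[G]$-submodule. This is where $\ell \nmid |G|$ enters again: an $A$-module splitting exists because $K_\infty/\UU$ is $A$-divisible, and since $|G|$ maps to a unit of $\Fq \subset A$ one can average it over $G$ to obtain an $A[G]$-module splitting. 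The paper omits this justification too, so your proof is as complete as the paper's, and you deserve credit for explicitly identifying the admissibility of $M^1 = \UU_{E,K}^{\co_K}$ as the only nontrivial step.
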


\begin{remark}\label{ATcomp}{\em Corollary \ref{mtIII} constitutes a non-abelian generalisation of Theorem A of Angl\`es and Taelman in \cite{at}, who considered the case where $k=\Fq(t)$, $E=C$ is the Carlitz module, and $K=\Fq(t)\left(C[P]\right)$ is the extension generated by the $P$-torsion points on the Carlitz module, for some prime $P$ of $A$. Note that this extension is abelian and of degree congruent to $-1$ modulo $q$.

Indeed, in the setting of loc. cit. it is straightforward to deduce from Corollary \ref{mtIII} the equality of $A[G]$-lattices
$$\theta^{\co_K}_{C,K/\Fq(t)}\cdot\co_K\,=\,\Fit_{A[G]}\left(H(C/\co_K)\right)\cdot {\rm exp}_C^{-1}(C(\co_K)).$$
}\end{remark}

\section{Nuclear automorphisms}\label{4}

In \cite[\S 2]{TAM}, Taelman adapted Anderson's `trace calculus' from \cite[\S 2]{Anderson} to develop a theory of determinants for `nuclear endomorphisms' of infinite-dimensional $\Fq$-vector-spaces.
Later, given a finite abelian group $\mathcal{A}$, Ferrara, Green, Higgins and Popescu \cite[\S 2]{fghp} extended Taelman's theory to consider $\mathcal{A}$-equivariant determinants for nuclear endomorphisms of topological, projective $\Fq[\mathcal{A}]$-modules.

Since we do not have an appropriate notion of reduced determinant over the general group rings we wish to work with, we instead associate appropriate classes in Whitehead groups to nuclear automorphisms.

\subsection{Whitehead groups of adic rings}\label{Wadic}

\subsubsection{}
We follow Fukaya and Kato \cite[\S 1.4]{fukaya-kato} in defining a notion of adic ring.

\begin{definition}\label{adic}{\em 
Fix a prime $\ell$. A ring $\Lambda$ is \textit{adic} if there is a two-sided ideal $I$ of $\Lambda$ such that $\Lambda/I^n$ is finite of $\ell$-power order for each $n\geq 1$, and the map $$\Lambda\longrightarrow{{\varprojlim}}_{n\geq 1}\Lambda/I^n$$ is bijective.
}\end{definition}

\begin{remark}{\em Every adic ring $\Lambda$ is semilocal, and we regard it as a topological ring, endowed with the profinite topology.
Then $K_1(\Lambda)$ is a topological group, as in \S \ref{211}. This topology on $\Lambda$ (and thus on $K_1(\Lambda)$) is independent of the choice of $I$; for example, fundamental systems of neighbourhoods of zero are given by the powers of the Jacobson radical (by the proof of \cite[Lem. 1.4.4(1)]{fukaya-kato}), by the left ideals of finite index (by \cite[Lem. 5.1.1(b)]{RZ} for $M=\Lambda$) or by the two-sided ideals of finite index (by \cite[\S 1.4.5]{fukaya-kato}).
}\end{remark}

\begin{example}{\em For any finite ring $R$ of $\ell$-power order, the power series ring $R[\![Z]\!]$ in a formal variable $Z$ is adic (one can take $I$ to be generated by $Z$).
Another example of adic ring is the completed group ring $\co[\![\mathcal{G}]\!]$ for the valuation ring $\co$ of a finite extension of $\QQ_\ell$ and a profinite group $\mathcal{G}$ that has a topologically finitely generated pro-$\ell$ open normal subgroup (see \cite[\S1.4.2]{fukaya-kato}).
}\end{example}

In this section we prove the following extension of \cite[Prop. 1.5.1]{fukaya-kato}.

\begin{proposition}\label{151}
Let $\Lambda$ be an adic ring and let $I$ be any ideal as in Definition \ref{adic}. Then the canonical homomorphism
\begin{equation}\label{map151}K_1(\Lambda)\longrightarrow{\varprojlim}_{n\geq 1}K_1(\Lambda/I^n)\end{equation} is an isomorphism of topological groups.
\end{proposition}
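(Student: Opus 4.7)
The plan is to reduce to the abstract isomorphism of groups already established by Fukaya and Kato in \cite[Prop. 1.5.1]{fukaya-kato}, and then to promote it to a homeomorphism via the soft topological fact that any continuous bijection from a compact space onto a Hausdorff space is automatically a homeomorphism. Thus the argument is essentially formal, modulo verifying that $K_1(\Lambda)$ is compact in its quotient topology and that the natural map is continuous.

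First I would verify that $K_1(\Lambda)$ is compact. Since $\Lambda = \varprojlim_n \Lambda/I^n$ is a projective limit of finite (discrete) rings, $\Lambda$ is compact Hausdorff. The group $\Lambda^\times$, identified via the embedding $u \mapsto (u,u^{-1})$ with the subset $\{(a,b) \in \Lambda \times \Lambda \mid ab = ba = 1\}$ of the compact space $\Lambda \times \Lambda$, is cut out by continuous equations, hence is closed and therefore compact in the topology defined in \S\ref{Wadic}. Moreover, $I$ is contained in the Jacobson radical of $\Lambda$ (as $1+I \subseteq \Lambda^\times$ by the usual geometric series), so $\Lambda/\operatorname{Jac}(\Lambda)$ is a quotient of the finite ring $\Lambda/I$, forcing $\Lambda$ to be semilocal. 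Bass' theorem then supplies a continuous surjection $\Lambda^\times \twoheadrightarrow K_1(\Lambda)$, so $K_1(\Lambda)$ is compact in its quotient topology.

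Next, continuity of the map (\ref{map151}) is immediate: each projection $\Lambda \twoheadrightarrow \Lambda/I^n$ is continuous and induces continuous maps $\Lambda^\times \to (\Lambda/I^n)^\times \to K_1(\Lambda/I^n)$, which factor through the continuous quotient $\Lambda^\times \twoheadrightarrow K_1(\Lambda)$, and the universal property of the inverse limit then delivers a continuous group homomorphism $K_1(\Lambda) \to \varprojlim_n K_1(\Lambda/I^n)$. Since each $(\Lambda/I^n)^\times$ is finite, so is $K_1(\Lambda/I^n)$, and the target is thus profinite, hence Hausdorff. Invoking \cite[Prop. 1.5.1]{fukaya-kato} for the bijectivity of this map, the compact-to-Hausdorff principle concludes that it is a homeomorphism.

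The only substantive input is Fukaya and Kato's algebraic isomorphism, which itself rests on a Mittag--Leffler/localisation analysis; everything beyond this is purely formal topology. The main conceptual point worth flagging is that the quotient topology on $K_1(\Lambda)$ need not a priori be Hausdorff, but this is irrelevant, since for the continuous-bijection argument it is only the target that needs to be Hausdorff, while the source need only be compact — and compactness is secured by the embedding of $\Lambda^\times$ as a closed subset of the profinite space $\Lambda \times \Lambda$.
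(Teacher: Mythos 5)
Your topological argument is clean and in fact slicker than the paper's: instead of tracking continuity and openness of the maps $\Lambda^\times \to K_1(\Lambda)$ and $\varprojlim (\Lambda/I^n)^\times \to \varprojlim K_1(\Lambda/I^n)$ separately and chasing a commutative square, you simply observe that $\Lambda^\times$ is closed in the compact space $\Lambda\times\Lambda$, hence compact, that $K_1(\Lambda)$ is therefore compact in its quotient topology, that the target is profinite hence Hausdorff, and invoke the compact-to-Hausdorff principle. That is a genuinely nice simplification and covers the second half of the paper's proof.

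However, there is a real gap in the algebraic half. You invoke \cite[Prop.~1.5.1]{fukaya-kato} directly for the map $K_1(\Lambda)\to\varprojlim_n K_1(\Lambda/I^n)$ where $I$ is an \emph{arbitrary} ideal as in Definition~\ref{adic}. But Fukaya and Kato prove bijectivity only in the special case where the ideal is the Jacobson radical $J$ of $\Lambda$, i.e.\ they prove that $K_1(\Lambda)\to\varprojlim_n K_1(\Lambda/J^n)$ is bijective. The paper explicitly flags this in the remark following the statement of Proposition~\ref{151}. Passing from $J$ to a general $I$ is not automatic: one has $I\subseteq J$ and $J^m\subseteq I$ for some $m$, so the filtrations $\{I^n\}$ and $\{J^n\}$ are interleaved and $\varprojlim\Lambda/I^n\cong\varprojlim\Lambda/J^n$; but this does not formally give $\varprojlim K_1(\Lambda/I^n)\cong\varprojlim K_1(\Lambda/J^n)$, because the comparison maps $K_1(\Lambda/J^{mk})\to K_1(\Lambda/I^k)$ and $K_1(\Lambda/I^k)\to K_1(\Lambda/J^k)$ need not be injective. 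The bulk of the paper's proof is devoted exactly to closing this gap: one shows these maps are surjective (Lemma~\ref{K1surj}, using that the kernels of the reduction maps on $\GL_n$ are governed by radical ideals), then passes to the inverse limits of finite groups to get surjective maps $\alpha$ and $\beta$, observes that $\beta\circ\alpha$ is bijective by cofinality, and concludes that $\alpha,\beta$ are both bijective. Only then can one compose with Fukaya--Kato's isomorphism for $J$. Your proposal needs to add this reduction (or an equivalent argument) before invoking the compact-to-Hausdorff principle.
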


\begin{remark}\label{K1Rfinite}{\em We recall again that for a semilocal ring $R$, the map $R^\times\to K_1(R)$ is surjective. In particular, for any finite ring $R$, the abelian group $K_1(R)$ is finite, so we regard the right-hand side of (\ref{map151}) as a profinite group.
Proposition \ref{151} thus implies that $K_1(\Lambda)$ is a profinite abelian group, and in particular is a Hausdorff space.
}\end{remark}

\begin{remark}{\em The proof of Proposition \ref{151} relies crucially on the special case, proved by Fukaya and Kato, in which $I$ is taken to be the Jacobson radical of $\Lambda$ and the corresponding map of the form (\ref{map151}) is shown to be an isomorphism of abelian groups.}\end{remark}

\subsubsection{} This section is devoted to the proof of Proposition \ref{151}.
We fix an ideal $I$ as in Definition \ref{adic} and write $J$ for the Jacobson radical of $\Lambda$. Note that we have inclusions $I\subseteq J$ and $1+I\subseteq 1+J\subseteq\Lambda^\times$ and that, in addition, the finiteness of $\Lambda/I$ implies that $J^m\subseteq I$ for some $m\geq1$. In the sequel we fix such an $m$.

\begin{lemma}\label{K1surj} For each $k\geq 1$, the maps
$$K_1(\Lambda/J^{mk})\to K_1(\Lambda/I^k)\,\,\,\text{ and }\,\,\,K_1(\Lambda/I^k) \to K_1(\Lambda/J^k)$$ 
are both surjective.\end{lemma}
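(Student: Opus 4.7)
The plan is to first prove surjectivity at the level of unit groups and then promote this to $K_1$ via Bass' theorem, already cited in the text for semilocal rings. Before starting, I would record that both maps are well-defined ring homomorphisms: since $J^m \subseteq I$, one has $J^{mk} = (J^m)^k \subseteq I^k$, giving a surjection $\Lambda/J^{mk} \twoheadrightarrow \Lambda/I^k$; and $I \subseteq J$ yields $I^k \subseteq J^k$, giving $\Lambda/I^k \twoheadrightarrow \Lambda/J^k$. All four rings involved are finite by the adic hypothesis, hence semilocal.

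The key observation is that in each case the kernel lies inside the Jacobson radical of the source. For the map $\Lambda/J^{mk} \twoheadrightarrow \Lambda/I^k$ the kernel is $I^k/J^{mk}$, and since $I \subseteq J$ this is contained in $J/J^{mk}$, which lies in the Jacobson radical of $\Lambda/J^{mk}$ (because $1+j$ is a unit of $\Lambda$ for every $j\in J$, and quotient maps send units to units). The same argument with the roles adjusted shows that the kernel $J^k/I^k$ of $\Lambda/I^k \twoheadrightarrow \Lambda/J^k$ lies in $J/I^k$, which is inside the Jacobson radical of $\Lambda/I^k$. From here the standard lifting argument applies: given a surjection of rings $R \twoheadrightarrow S$ whose kernel lies in $J(R)$, any unit $s \in S^\times$ lifts to a unit of $R$, because if $r,r'\in R$ lift $s,s^{-1}$ then $rr'=1+x$ with $x\in J(R)$, so $1+x \in R^\times$ and $r$ is a unit. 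Consequently both
\[(\Lambda/J^{mk})^\times \longrightarrow (\Lambda/I^k)^\times \quad\text{and}\quad (\Lambda/I^k)^\times \longrightarrow (\Lambda/J^k)^\times\]
are surjective.

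Finally, I would invoke Bass' theorem (recalled in the paper for semilocal rings) to conclude: the canonical maps $R^\times \twoheadrightarrow K_1(R)$ are surjective for each of the finite rings in question, so the surjectivity of the unit-group maps forces the surjectivity of the induced $K_1$-maps, since the composites $(\Lambda/J^{mk})^\times \twoheadrightarrow (\Lambda/I^k)^\times \twoheadrightarrow K_1(\Lambda/I^k)$ and $(\Lambda/I^k)^\times \twoheadrightarrow (\Lambda/J^k)^\times \twoheadrightarrow K_1(\Lambda/J^k)$ factor through $K_1$ of the source. No serious obstacle is expected: the only point that requires a moment's care is verifying the inclusion of the kernels in the respective Jacobson radicals, which is immediate from $I \subseteq J$ and $J^m \subseteq I$ together with the characterisation of $J$ via $1+J \subseteq \Lambda^\times$.
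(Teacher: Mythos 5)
Your proof is correct. The core observation in both your argument and the paper's is identical: the kernels $I^k/J^{mk}$ and $J^k/I^k$ of the quotient maps are radical ideals in the sense that $1$ plus any element is a unit (equivalently, they lie in the Jacobson radical of the source), and units therefore lift along the quotient. Where you diverge is in the final step. The paper lifts invertible $n\times n$ matrices for every $n$ along each quotient map (citing \cite[Exer.\ 1.12]{K-book}, a standard fact that surjections with radical kernel induce surjections on each ${\rm GL}_n$), and then surjectivity on $K_1$ is immediate from the definition of $K_1$ as a colimit of ${\rm GL}_n$ modulo commutators. You instead lift only units (the $n=1$ case), and then invoke Bass' theorem for semilocal rings to see that $(\Lambda/I^k)^\times \twoheadrightarrow K_1(\Lambda/I^k)$, so that surjectivity of the composite $(\Lambda/J^{mk})^\times \to (\Lambda/I^k)^\times \to K_1(\Lambda/I^k)$, which by naturality factors through $K_1(\Lambda/J^{mk})$, forces the desired surjectivity on $K_1$ (and likewise for the second map). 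Both routes are valid: the paper's avoids any appeal to Bass' theorem and is marginally more self-contained, while yours avoids the general statement about ${\rm GL}_n$ by leaning on a theorem the paper has already recorded and uses elsewhere; in a context where the semilocal machinery is set up from the start, your variant is a natural and clean choice.
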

\begin{proof}
The result follows readily from the definition of Whitehead groups after we ensure that, for every $n\geq 1$, the 
maps
$${\rm \GL}_n(\Lambda/J^{mk})\to {\rm \GL}_n(\Lambda/I^k)\,\,\,\text{ and }\,\,\,{\rm \GL}_n(\Lambda/I^k) \to {\rm \GL}_n(\Lambda/J^k)$$
are both surjective.

A two-sided ideal $H$ in a ring $R$ is radical if $1+H\subseteq R^\times$. Since both $I^k$ and $J^k$ are radical in $\Lambda$, the ideal $I^k/J^{mk}$ is radical in $\Lambda/J^{mk}$ and the ideal $J^k/I^k$ is radical in $\Lambda/I^k$. The required surjectivities follow from these facts as in \cite[Exer. I.1.12]{K-book}.
\end{proof}

Noting that the quotient rings occurring in Lemma \ref{K1surj} are finite, thus so are their $K_1$-groups (by Remark \ref{K1Rfinite}), we deduce that the induced limit maps
$$\alpha:{\varprojlim}_{k\geq 1}K_1(\Lambda/J^{mk})\to {\varprojlim}_{k\geq 1}K_1(\Lambda/I^k),\,\,\,\,\,\,\beta:{\varprojlim}_{k\geq 1}K_1(\Lambda/I^k) \to {\varprojlim}_{k\geq 1}K_1(\Lambda/J^k)$$
are both surjective.
Since the composition $\beta\circ\alpha$ is itself bijective, we then deduce that so are both $\alpha$ and $\beta$.

We may finally apply the result \cite[Prop. 1.5.1]{fukaya-kato} of Fukaya and Kato, which states that the map
$$K_1(\Lambda)\longrightarrow{\varprojlim}_{n\geq 1}K_1(\Lambda/J^n)$$ is bijective. Noting that this map is the composition of the map (\ref{map151}) with $\beta$, we have now shown that the map (\ref{map151}) is an isomorphism of abelian groups.

It remains to show that (\ref{map151}) is in fact, an isomorphism of topological groups. In order to do so we require the following fact, which is straightforward to prove.

\begin{lemma}\label{unitshomeo} The map $\Lambda^\times\longrightarrow\varprojlim_{n\geq 1}(\Lambda/I^n)^\times$ is an isomorphism of topological groups.\end{lemma}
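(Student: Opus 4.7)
The plan is to split the statement into an essentially algebraic bijectivity claim and a subsequent compactness argument that upgrades it to a homeomorphism.

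By Definition \ref{adic}, the canonical map $\Lambda\to\varprojlim_n\Lambda/I^n$ is a ring isomorphism, and is moreover a homeomorphism when $\Lambda$ carries its profinite topology. Restricting to units gives a continuous group homomorphism
$$\Lambda^\times\longrightarrow\varprojlim_{n\geq 1}(\Lambda/I^n)^\times,$$
and I will first show that it is bijective. Injectivity is immediate from injectivity at the ring level. For surjectivity, any compatible system $(u_n)\in\varprojlim_n(\Lambda/I^n)^\times$ lifts to an element $u\in\Lambda$; since each $u_n$ is invertible and the projections $\Lambda/I^{n+1}\to\Lambda/I^n$ are ring homomorphisms, the sequence $(u_n^{-1})$ is also coherent and lifts to some $v\in\Lambda$. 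Then $uv$ and $vu$ both lift the constant system $(1,1,\ldots)$, so $uv=vu=1$ in $\Lambda$.

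To upgrade this set-theoretic isomorphism to a homeomorphism, I will argue that both sides are compact Hausdorff. The right-hand side is evidently profinite, being an inverse limit of finite discrete groups. On the left, recall from \S\ref{Wadic} that $\Lambda^\times$ is endowed with the topology induced by the embedding $u\mapsto(u,u^{-1})$ into the compact space $\Lambda\times\Lambda$. The image of this embedding is
$$\bigl\{(a,b)\in\Lambda\times\Lambda\bigm\vert ab=1=ba\bigr\},$$
which is the intersection of the preimages of the closed point $1\in\Lambda$ under the continuous multiplication maps $(a,b)\mapsto ab$ and $(a,b)\mapsto ba$, hence is closed in $\Lambda\times\Lambda$. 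Therefore $\Lambda^\times$ is compact.

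A continuous bijection from a compact space to a Hausdorff space is automatically a homeomorphism, which completes the proof. The only mild subtlety is the need to work with the graph topology on $\Lambda^\times$ rather than the subspace topology inherited from $\Lambda$: it is precisely the embedding $u\mapsto(u,u^{-1})$ that realises $\Lambda^\times$ as a closed subspace of the compact space $\Lambda\times\Lambda$, and hence allows the compactness argument to go through.
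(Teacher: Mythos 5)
Your proof is correct, and the compactness argument is the natural way to establish the homeomorphism. The paper itself gives no proof of Lemma~\ref{unitshomeo} (it is introduced with the remark that it is ``straightforward to prove''), so there is no official argument to compare against; your decomposition into (a) a purely algebraic bijectivity step, using the coherence of the inverses under the ring projections, and (b) a topological upgrade via ``continuous bijection from compact to Hausdorff is a homeomorphism'' is exactly the kind of standard reasoning the authors presumably had in mind. The one point you correctly flag as a potential subtlety --- that $\Lambda^\times$ is compact for the graph topology $u\mapsto(u,u^{-1})$ rather than the mere subspace topology from $\Lambda$ --- is indeed the only place where care is needed, and your observation that the graph $\{(a,b):ab=ba=1\}$ is closed in the compact space $\Lambda\times\Lambda$ handles it cleanly. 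Continuity of the forward map is also in order, since each composite $\Lambda^\times\to\Lambda\to\Lambda/I^n$ is continuous (the first projection from the graph topology is continuous, and the quotient maps are continuous for the profinite topology on $\Lambda$). No gaps.
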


We now denote by $\rho_n$ each map $(\Lambda/I^n)^\times\to K_1(\Lambda/I^n)$, write $\rho$ for their limit and
consider the commutative square
\begin{equation}\label{topqts}
\begin{tikzcd}
    \Lambda^\times \ar[r] \ar[d] &{\varprojlim}_{n\geq 1}(\Lambda/I^n)^\times \ar[d,"{\rho}"] \\
    K_1(\Lambda) \ar[r] & {\varprojlim}_{n\geq 1}K_1(\Lambda/I^n)
    \end{tikzcd}
\end{equation}
in which the horizontal arrows are bijective and the vertical arrows are surjective.

The vertical arrow on the left is both continuous and open, by definition of the topological group $K_1(\Lambda)$. As for $\rho$, it coincides with the composite map
$${\varprojlim}_{n\geq 1}(\Lambda/I^n)^\times\longrightarrow\Bigl({\varprojlim}_{n\geq 1}(\Lambda/I^n)^\times\Bigr)\Big/\Bigl({\varprojlim}_{n\geq 1}\ker(\rho_n)\Bigr)\cong{\varprojlim}_{n\geq 1}K_1(\Lambda/I^n).$$ Here, the isomorphism is of topological groups by \cite[III.59, Cor. 3]{TG}. Therefore, $\rho$ is also continuous and open.

These facts, together with Lemma \ref{unitshomeo} and the commutativity of (\ref{topqts}), 
imply that the map (\ref{map151}) is an isomorphism of topological groups, as claimed in Proposition \ref{151}.

\subsubsection{}The following consequence of Proposition \ref{151} will be useful in our study of infinite products.
In claim (i), given an adic ring $\Lambda$, an ideal $I$ as in Definition \ref{adic} and $n\in\NN$, we write $\pi_{I^n}$ for the map $K_1(\Lambda)\to K_1(\Lambda/I^n)$.

\begin{corollary}\label{convergencecor} Let $\Lambda$ be an adic ring. Let $(x_i)_{i\in\NN}$ be a sequence in $K_1(\Lambda)$.
\begin{itemize}
\item[(i)] The sequence $(\prod_{i=1}^{i=j}x_i)_{j\in\NN}$ converges in $K_1(\Lambda)$ if and only
if for any $I$ as in Def. \ref{adic}, and
every $n\in\NN$, $\pi_{I^n}(x_i)=1$ for all but finitely many indices $i\in\NN$.
\item[(ii)] Let $\beta:\Lambda\to S$ be an injective, continuous and open, homomorphism into a semilocal topological ring $S$. Let $\delta:K_1(\Lambda)\to K_1(S)$ be the map induced by $\beta$. Assume that $\ker(\delta)$ is closed in $K_1(\Lambda)$.

If the sequence $(\prod_{i=1}^{i=j}x_i)_{j\in\NN}$ converges in $K_1(\Lambda)$, then the sequence
$(\prod_{i=1}^{i=j}\delta(x_i))_{j\in\NN}$ converges in $K_1(S)$, and:
\begin{itemize}
\item[(a)] it converges to a unique element $\prod_{i\in\NN}\delta(x_i)$ of $K_1(S)$;
\item[(b)] for any bijection $\sigma:\NN\to\NN$, one has $\prod_{i\in\NN}\delta(x_{\sigma(i)})=\prod_{i\in\NN}\delta(x_i)$; and
\item[(c)] $\prod_{i\in\NN}\delta(x_i)=\delta(\prod_{i\in\NN}x_i)$.
\end{itemize}
\end{itemize}
\end{corollary}
\begin{proof} 
We first prove (i), and also (ii) in the special case that $\beta$ is the identity $\Lambda\to\Lambda$.

We fix $I$. Given $n\in\NN$, since $K_1(\Lambda/I^n)$ is a (finite) discrete space, the sequence $(\prod_{i=1}^{i=j}\pi_{I^n}(x_i))_{j\in\NN}$ converges if and only if $\pi_{I^n}(x_i)=1$ for all but finitely many indices $i\in\NN$. Thus, the latter condition is valid for every $n\in\NN$ if and only if the sequence $((\prod_{i=1}^{i=j}\pi_{I^n}(x_i))_{j\in\NN})_{n\in\NN}$ converges in $\prod_{n\in\NN}K_1(\Lambda/I^n)$, in which case it converges to a family of finite products $(\prod_{i=1}^{i=B_n}\pi_{I^n}(x_i))_{n\in\NN}$, for some $B_n\in\NN$. Such a family of finite products is obviously unique, belongs to $\varprojlim_{n\in\NN}K_1(\Lambda/I^n)$, and is invariant under re-ordering of the indices $i$ (as in claim (ii)(b)). By Proposition \ref{151}, both claim (i), and also (ii) in the special case $\beta={\rm id}_\Lambda$, are valid (with claim (ii)(c) being trivial).

In the general case, $\delta$ is continuous and open by Lemma \ref{ctsopenlem} so, by \cite[III.16, Prop. 24]{TG}, it induces an isomorphism of topological groups $K_1(\Lambda)/\ker(\delta)\cong\im(\delta)$. If $\ker(\delta)$ is assumed to be closed in the profinite abelian group $K_1(\Lambda)$, then it would follow that $\im(\delta)$ is a Hausdorff subspace of $K_1(S)$.

Even without this assumption, the continuity of $\delta$ implies that if the sequence $(\prod_{i=1}^{i=j}x_i)_{j\in\NN}$ converges to $\prod_{i\in\NN}x_i$ in $K_1(\Lambda)$, then the sequence \begin{equation*}\label{sequenceim}({\prod}_{i=1}^{i=j}\delta(x_i))_{j\in\NN}=(\delta({\prod}_{i=1}^{i=j}x_i))_{j\in\NN}\end{equation*} converges to $\delta(\prod_{i\in\NN}x_i)$ in $\im(\delta)$. It therefore also converges to $\delta(\prod_{i\in\NN}x_i)$ in $K_1(S)$. 

Assuming now that $\ker(\delta)$ is closed, $\im(\delta)$ is closed in $K_1(S)$ by Lemma \ref{ctsopenlem} and $\im(\delta)$ is Hausdorff, so the limit $\prod_{i\in\NN}\delta(x_i):=\delta(\prod_{i\in\NN}x_i)$ is indeed unique in $K_1(S)$.

It only remains to prove claim (ii)(b), which now follows from the case $\delta={\rm id}_\Lambda$ since $\prod_{i\in\NN}\delta(x_{\sigma(i)}):=\delta(\prod_{i\in\NN}x_{\sigma(i)})=\delta(\prod_{i\in\NN}x_i)=:\prod_{i\in\NN}\delta(x_i)$.
\end{proof}

\subsection{Nuclear automorphisms in Whitehead groups}

Let $G$ be a finite group.
The power series ring $$\Lambda:=\Fq[G][\![Z]\!]$$ over $R:=\Fq[G]$ is adic, with respect to $\ell$ and to the ideal $I$ generated by $Z$. 

In this section we use Proposition \ref{151} to associate a class in $K_1(\Lambda)$ to `nuclear $\Lambda$-automorphisms' of $R$-modules.

\subsubsection{}\label{421} In order to define a suitable notion of nuclear $\Lambda$-automorphism, we first follow \cite[Def. 2.1.1]{fghp} in fixing the following data $(V,\cU)$:

\smallskip

\noindent{}$\bullet$ A compact topological $R$-projective module $V$.
\smallskip

\noindent{}$\bullet$ A decreasing sequence $\cU=\{U_i\}_{i\geq 1}$ of $R$-projective submodules of $V$ that forms a basis of open neighbourhoods of 0 in $V$.

\begin{remark}\label{VGct}{\em By Lemma \ref{TMremark1}(i), an $R$-module is projective if and only if it is $G$-c.t. In particular, any quotient of projective $R$-modules is projective.
}\end{remark}

\begin{definition}\label{igeqM}{\em A continuous $R$-endomorphism $\varphi$ of $V$ is \emph{locally contracting} if there is $M\geq 1$ such that $\varphi(U_i)\subseteq U_{i+1}$ for all $i\geq M$. Then $U_M$ is a \emph{nucleus} for $\varphi$.}\end{definition}

\begin{remarks}\label{common-sum-comp}{\em \

\noindent{}(i) Any finite family of locally contracting endomorphisms of $V$ has a common nucleus.

\noindent{}(ii) If $\varphi$ and $\psi$ are locally contracting then so are the sum $\varphi+\psi$ and composition $\varphi\psi$.
}\end{remarks}

\begin{remark}\label{fgV}{\em If $V$ is a finitely generated, projective $R$-module then we always take $U_i=0$ for $i\geq 1$, so that every continuous endomorphism of $V$ is locally contracting.}\end{remark}

In applications of the general theory to Drinfeld module actions in \S \ref{5}, we are interested in the following type of example.

\begin{example}\label{RZ}{\em If $V$ is also a topological $R[Z]$-module, with respect to the $Z$-adic topology on $R[Z]$, and $W$ is an open submodule of $V$ that is $R$-projective and has no $Z$-torsion, one may take $U_i=Z^{i}\cdot W$. In this case, the action on $V$ of any polynomial without constant term in $R[Z]$ is locally contracting.

If, in this setting, $V$ is also an $R[Z]\{\tau\}$-module and $W$ is a submodule, then the action of $\tau$ is locally contracting, since $\tau\cdot U_i=\tau Z^{i}\cdot W=Z^{iq}\tau\cdot W\subseteq Z^{iq}\cdot W=U_{iq}$.
}\end{example}

\subsubsection{}
We may now define nuclear $\Lambda$-automorphism, with respect to our fixed $(V,\cU)$.

\begin{definition}\label{nuclear}{\em A \emph{nuclear $\Lambda$-automorphism of $(V,\cU)$} is a sequence $\Phi=(\varphi_j)_{j\geq 1}$ of continuous, locally contracting $R$-endomorphisms of $V$. 
We write
$$\tilde\varphi_j:=\begin{cases}{\rm id}_V,\,\,\,\,\,\,\,\,\,\,\,\,\,\,\,\,\,\,\,\,\,\,\,\,\,\,\text{if }j=0,\\
\varphi_j,\,\,\,\,\,\,\,\,\,\,\,\,\,\,\,\,\,\,\,\,\,\,\,\,\,\,\,\,\,\text{if }j\neq 0\end{cases}$$ and then denote by $1+\Phi$ the sequence $(\tilde\varphi_j)_{j\geq 0}$.
}\end{definition}

For any $R$-module $M$ and any $N\geq 1$, we set $M_N:=(\Lambda/Z^N)\otimes_R M$.

\begin{lemma}\label{autom} Fix a nuclear $\Lambda$-automorphism $\Phi=(\varphi_j)_{j\geq 1}$ of $(V,\cU)$, an integer $N\geq 1$ and a common nucleus $U\in\cU$ for $\varphi_1,\ldots,\varphi_{N-1}$. Then the $\Lambda/Z^N$-endomorphism
\begin{equation*}\label{poly-autom}(1+\Phi)_N:=\sum_{j=0}^{j=N-1}(Z^j\otimes\tilde\varphi_j)\,:\,\,\,(V/U)_N\,\longrightarrow\,(V/U)_N\end{equation*}
is bijective and its class $\left[1+\Phi\right]_N$ in $K_1(\Lambda/Z^N)$ is independent of the choice of $U\in\cU$.
\end{lemma}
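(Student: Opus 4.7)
I plan to prove the lemma in three steps: setting up the endomorphism on $(V/U)_N$, verifying bijectivity via a nilpotent perturbation of the identity, and proving independence of the nucleus via an additivity argument in $K_1$ along a filtration. For the first step, the key observation is that $U=U_M$ being a common nucleus for $\varphi_1,\ldots,\varphi_{N-1}$ forces $\varphi_j(U)\subseteq U_{M+1}\subseteq U$, so each $\varphi_j$ descends to an $R$-endomorphism of $V/U$. Since $V/U$ is $R$-projective (Remark \ref{VGct}) and finite (as $V$ is compact and $U$ open), $(V/U)_N$ is a finitely generated projective $\Lambda/Z^N$-module on which $(1+\Phi)_N$ is a well-defined endomorphism. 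Writing $(1+\Phi)_N=\id+\sigma$ with $\sigma:=\sum_{j=1}^{N-1}Z^j\otimes\varphi_j$, every term of $\sigma^k$ carries a factor of $Z^k$, so $\sigma^N=0$ in $\End_{\Lambda/Z^N}((V/U)_N)$ and $(1+\Phi)_N$ is invertible with explicit inverse $\sum_{k=0}^{N-1}(-\sigma)^k$, defining a class $[1+\Phi]_N\in K_1(\Lambda/Z^N)$.

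For the independence of this class on the choice of common nucleus, I would first replace two given common nuclei by their intersection (still a common nucleus because $\cU$ is decreasing) to reduce to a nested pair $U'\subseteq U$, and write $U=U_M$, $U'=U_{M'}$ with $M'\geq M$. Interpolating through the filtration $U=U_M\supseteq U_{M+1}\supseteq\cdots\supseteq U_{M'}=U'$, for each $i$ with $M\leq i<M'$ I would consider the short exact sequence
$$0\to U_i/U_{i+1}\to V/U_{i+1}\to V/U_i\to 0$$
of finite $R$-projective modules (Remark \ref{VGct}). This sequence splits over $R$ since its terms are projective, hence remains exact upon tensoring with $\Lambda/Z^N$, yielding a short exact sequence of finitely generated projective $\Lambda/Z^N$-modules compatible with $(1+\Phi)_N$, whose restrictions to each term are bijective by the argument of the previous paragraph. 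Additivity of $K_1$ then yields
$$\bigl[(1+\Phi)_N\text{ on }(V/U_{i+1})_N\bigr]=\bigl[(1+\Phi)_N\text{ on }(V/U_i)_N\bigr]\cdot\bigl[(1+\Phi)_N\text{ on }(U_i/U_{i+1})_N\bigr],$$
and the key observation is that $i\geq M$ forces $\varphi_j(U_i)\subseteq U_{i+1}$ for every $j\in\{1,\ldots,N-1\}$, so each $\varphi_j$ acts as zero on $U_i/U_{i+1}$ and $(1+\Phi)_N$ restricts to the identity there, contributing a trivial class. Iterating over $i$ identifies the classes attached to $U$ and $U'$.

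The main point requiring care is the compatibility in the third step: one must verify that each term of the filtration is $R$-projective and finite, that the relevant $R$-module short exact sequences remain exact under extension of scalars to $\Lambda/Z^N$, and that each of them is compatible with $(1+\Phi)_N$. All of this follows cleanly from the stability of common nuclei under shrinking, namely that $U_i$ with $i\geq M$ is itself a common nucleus for $\varphi_1,\ldots,\varphi_{N-1}$ and is therefore preserved by every $\varphi_j$.
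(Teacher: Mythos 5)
Your proposal is correct and follows essentially the same strategy as the paper: for bijectivity you use nilpotency of the higher-order part to produce a geometric-series inverse, while the paper writes out the same inverse via the standard recursive power-series coefficients (these are equivalent); for independence both arguments reduce to two nested nuclei in $\cU$, filter through intermediate $U_i$'s, and use multiplicativity of $K_1$ together with the observation that $(1+\Phi)_N$ restricts to the identity on each $(U_i/U_{i+1})_N$ since $\varphi_j(U_i)\subseteq U_{i+1}$ for $i\geq M$. Your explicit remark that the $R$-short exact sequences remain exact after $\otimes_R\Lambda/Z^N$ is a welcome detail that the paper leaves implicit (one could equally note $\Lambda/Z^N$ is $R$-free, hence flat), and the "intersection" step in your reduction is automatic since the elements of $\cU$ are totally ordered; neither point affects correctness.
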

\begin{proof}
We can write the inverse of $(1+\Phi)_N$ explicitly using the standard recursive formulae for inverses of power series. 
For any $R$-endomorphisms $\psi_0,\ldots,\psi_{N-1}$ of $V/U$, one has the following equality of $\Lambda/Z^N$-endomorphisms of $(V/U)_N$,
$$(1+\Phi)_N\circ \sum_{j=0}^{j=N-1}(Z^j\otimes \psi_j)\,=\,\sum_{k=0}^{k=N-1}\Bigr(Z^k \otimes \Bigl(\sum_{j=0}^{j=k}\tilde\varphi_j \psi_{k-j}\Bigr)\Bigr).$$
For $\psi_0 = {\rm id}_{V/U}$ and
$\psi_k = - {\sum}_{j=1}^{j=k}\tilde\varphi_j \psi_{k-j}$ for $k>0$,
one indeed has
${\sum}_{j=0}^{j=k}\tilde\varphi_j \psi_{k-j} = 0$.

Now assume given common nuclei $U$, $W$ for $\varphi_1,\ldots,\varphi_{N-1}$ in $\cU$ and assume without loss of generality that $U \subset W$, so there is a descending chain
\begin{equation*}
    W = U_J \supset U_{J+1} \supset \cdots \supset U_M = U.
\end{equation*}
Consider the short exact sequences of finitely generated, projective $\Lambda/Z^N$-modules
\begin{center}
    \begin{tikzcd}[column sep= small]
    0 \ar[r] & (W / U)_N\ar[r] & (V/ U)_N \ar[r] & (V/W)_N \ar[r] & 0 \\
    0 \ar[r] & (U_{J+1} / U)_N \ar[r] & (W/ U)_N \ar[r] & (W/U_{J+1})_N \ar[r] & 0\\
    &&\cdots\\
    0 \ar[r] & (U_{M-1} / U)_N \ar[r] & (U_{M-2}/ U)_N \ar[r] & (U_{M-2}/U_{M-1})_N \ar[r] & 0.
    \end{tikzcd}
\end{center}
Since $(1+\Phi)_N$ defines automorphisms of each of these modules, in $K_1(\Lambda/Z^N)$ one has
$$\left[(V/U)_N,(1+\Phi)_N\right]\,=\,\left[(V/W)_N,(1+\Phi)_N\right]\cdot{\prod}_{i=J}^{i=M-1}\left[(U_i/U_{i+1})_N,(1+\Phi)_N\right].$$
However, $(1+\Phi)_N$ is the identity map on each of the modules $(U_i/U_{i+1})_N$, so the class of $(1+\Phi)_N$ does not depend on whether it is computed with respect to $U$ or $W$.
\end{proof}

\begin{remark}\label{compatclasses}{\em In the notation of Lemma \ref{autom}, if $N\geq 2$, then $[1+\Phi]_{N-1}=[(V/U)_{N-1},\sum_{j=0}^{j=N-2}(Z^j\otimes\tilde\varphi_j)]\in K_1(\Lambda/Z^{N-1})$ is the image of $[1+\Phi]_N\in K_1(\Lambda/Z^N)$.
}\end{remark}

\begin{definition}\label{limitclass}{\em
For a nuclear $\Lambda$-automorphism $\Phi$ of $(V,\cU)$, we use Proposition \ref{151}, Lemma \ref{autom} and Remark \ref{compatclasses} to define the \emph{power series class} of $\Phi$ in $K_1(\Lambda)$ to be $$\left[1+\Phi\right]=\left[1+\Phi\mid V\right]:=\Bigl(\left[1+\Phi\right]_N\Bigr)_{N\geq 1}.$$
}\end{definition}

\begin{remark}{\em Definitions \ref{igeqM} and \ref{nuclear} only make sense with respect to a fixed $V$ and $\cU$, and the same is true for Definition \ref{limitclass}. Even though the notation $[1+\Phi]$ omits the choice of both $V$ and $\cU$ and the notation $[1+\Phi\mid V]$ omits the choice of $\cU$, we always specify the $R$-module $V$ on which the components of $\Phi$ act, and the sequence $\cU$ with respect to which (they are locally contracting and) the power series class of $\Phi$ is taken to be computed (via choices of common nuclei $U\in\cU$ in Lemma \ref{autom}).
}\end{remark}

\begin{remark}\label{fgclass}{\em If $V$ is a finitely generated, projective $R$-module, then since $U_i=0$ for each $i$ by Remark \ref{fgV}, $\left[1+\Phi\mid V\right]$ is simply the class in $K_1(\Lambda)$ of the $\Lambda$-automorphism $\sum_{j=0}^{j=\infty}(Z^j\otimes\tilde\varphi_j)$ of the finitely generated, projective $\Lambda$-module $\Lambda\otimes_R V=\varprojlim_{N\geq 1}V_N$.}\end{remark}

\begin{remark}\label{ses}{\em
Let $V^1$ be a closed $R$-projective submodule of $V$. Set $V^2=V/V^1$, $\cU^1=\{U_i^1\}$ with $U_i^1:=U_i \cap V^1$ and $\cU^2=\{U_i^2\}$, where $U_i^2$ denotes the image of $U_i$ in $V^2$. Let $\{a,b\}=\{1,2\}$.

If each $R$-module $U_i^{a}$ is projective and $\Phi$ is a nuclear $\Lambda$-automorphism of $(V,\cU)$ that induces a well-defined  nuclear $\Lambda$-automorphism of $(V^{a},\cU^{a})$, then $\Phi$ also induces a nuclear $\Lambda$-automorphism of $(V^b,\cU^b)$ and, computing the respective power series classes with respect to $\cU$, $\cU^1$ and $\cU^2$, one has
\begin{equation*}
\left[1+ \Phi \mid V \right]\,=\,\left[1+\Phi \mid V^{1}\right]\cdot\left[1+\Phi \mid V^2\right].
\end{equation*} 
}\end{remark}

\begin{remark}\label{multiplicativity}{\em Given nuclear $\Lambda$-automorphisms $\Phi=(\varphi_j)_{j\geq 1}$ and $\Psi=(\psi_j)_{j\geq 1}$ of $(V,\cU)$, the sequence
$$(1+\Phi)(1+\Psi)-1\,:=\,\bigl(\varphi_j+\psi_j+{\sum}_{i=1}^{i=j-1}\varphi_i\psi_{j-i}\bigr)_{j\geq 1}$$ is a nuclear $\Lambda$-automorphism of $(V,\cU)$, and one has
$$\left[1+(1+\Phi)(1+\Psi)-1\right]\,=\,\left[1+\Phi\right]\cdot\left[1+\Psi\right].$$
}\end{remark}

\subsubsection{} Nuclear automorphisms comprising a single non-trivial term play an important role in the sequel.

\begin{definition}\label{charPSC}{\em For a continuous, locally contracting $R$-endomorphism $\varphi$ of $V$ and $m\geq 1$,  
we set $\left[1-\varphi_m\right]_N=\left[1+\Phi\right]_N$ for $N\geq 1$ and $\left[1-\varphi_m\right]=\left[1+\Phi\right]$, for $\Phi=(\varphi_j)$ with
$$\varphi_j:=\begin{cases}-\varphi,\,\,\,\,\,\,\,\,\,\,\,\,\,\,\,\,\,\,\,\,\,\,\,\text{if }j=m,\\
0,\,\,\,\,\,\,\,\,\,\,\,\,\,\,\,\,\,\,\,\,\,\,\,\,\,\,\,\,\,\text{if }j\neq m\end{cases}.$$
}\end{definition}

The following result extends \cite[Thm. 2]{TAM} to our setting.

\begin{proposition}\label{varphialpha} Assume given continuous $R$-endomorphisms $\alpha,\varphi$ of $V$ with the following properties.
\begin{itemize}\item[(a)] $\alpha$ is surjective. \item[(b)] Both of the compositions $\varphi\alpha$ and $\alpha\varphi$ are locally contracting, and there is a common nucleus $U_a\in\cU$ for them with
$$\varphi(U_a)\,\subseteq\,U_{a+b}\,\,\,\text{ and }\,\,\,\alpha(U_{a+b})\,\subseteq\,U_a$$
for some $b\geq 1$.
\end{itemize}
Then for every $m\geq 1$, one has
$$\left[1-(\varphi\alpha)_m\right]\,=\,\left[1-(\alpha\varphi)_m\right].$$
\end{proposition}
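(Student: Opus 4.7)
The plan is to prove the equality modulo $Z^N$ for each $N\geq 1$ and then pass to the limit via Proposition \ref{151} and Definition \ref{limitclass}. Specifically, I shall adapt to this setting the standard block-matrix proof of the Sylvester-style identity $[1-AB]=[1-BA]$ in Whitehead groups, where $\alpha$ and $Z^m\varphi$ play the roles of the two factors.

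By hypothesis (b), $\varphi$ induces a well-defined $R$-linear map $V/U_a\to V/U_{a+b}$ and $\alpha$ induces a well-defined $R$-linear map $V/U_{a+b}\to V/U_a$. Consequently both compositions $\varphi\alpha$ and $\alpha\varphi$ descend to endomorphisms of each of the quotients $V/U_a$ and $V/U_{a+b}$, and both $U_a$ and $U_{a+b}$ are common nuclei for $\varphi\alpha$ and $\alpha\varphi$. Setting $M_1:=(V/U_a)_N$ and $M_2:=(V/U_{a+b})_N$, both finitely generated projective $\Lambda/Z^N$-modules, I will then consider the $\Lambda/Z^N$-endomorphism
$$T:=\begin{pmatrix} 1 & \alpha \\ Z^m\varphi & 1 \end{pmatrix}\,:\,M_1\oplus M_2\,\longrightarrow\,M_1\oplus M_2.$$

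Direct matrix multiplication produces two block factorizations of $T$,
$$T=\begin{pmatrix} 1 & 0 \\ Z^m\varphi & 1 \end{pmatrix}\begin{pmatrix} 1 & \alpha \\ 0 & 1-Z^m\varphi\alpha \end{pmatrix}=\begin{pmatrix} 1 & \alpha \\ 0 & 1 \end{pmatrix}\begin{pmatrix} 1-Z^m\alpha\varphi & 0 \\ Z^m\varphi & 1 \end{pmatrix}.$$
The blocks $1-Z^m\varphi\alpha$ and $1-Z^m\alpha\varphi$ are automorphisms of $M_2$ and $M_1$ respectively, with formal inverses $\sum_{j\geq 0}Z^{jm}(\varphi\alpha)^j$ and $\sum_{j\geq 0}Z^{jm}(\alpha\varphi)^j$ that terminate in $\Lambda/Z^N$. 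Hence all four block-triangular factors are automorphisms; the unipotent ones have trivial class in $K_1(\Lambda/Z^N)$, and multiplicativity then yields
$$\left[1-Z^m\varphi\alpha\mid M_2\right]\,=\,[T]\,=\,\left[1-Z^m\alpha\varphi\mid M_1\right].$$

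To conclude, Lemma \ref{autom} identifies the outer classes with $[1-(\varphi\alpha)_m]_N$ and $[1-(\alpha\varphi)_m]_N$ respectively (using the independence of the class on the choice of nucleus, together with the fact that $U_{a+b}$ is a nucleus for $\varphi\alpha$ and $U_a$ is a nucleus for $\alpha\varphi$). Since this equality holds for every $N\geq 1$, Definition \ref{limitclass} then delivers the desired equality in $K_1(\Lambda)$. I expect the main technical care to lie in bookkeeping the well-definedness of the various induced maps and the common-nucleus conditions rather than in any substantive algebraic difficulty; it is worth noting that the surjectivity hypothesis (a) on $\alpha$ does not appear to enter this $K_1$-theoretic argument, in contrast with the role it plays in the determinantal counterpart \cite[Thm. 2]{TAM}.
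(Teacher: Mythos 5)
Your proof is correct, but it takes a genuinely different route from the paper's. The paper, following Taelman's \cite[Thm. 2]{TAM}, computes $[1-(\varphi\alpha)_m]_N$ with respect to the nucleus $U_{a+b}$, uses the inclusion $\varphi\alpha(\alpha^{-1}(U_a))\subseteq U_{a+b}$ to split off a trivial class on $(\alpha^{-1}(U_a)/U_{a+b})_N$, and then uses hypothesis (a) in an essential way: surjectivity of $\alpha$ supplies the isomorphism $1\otimes\alpha : (V/\alpha^{-1}(U_a))_N \cong (V/U_a)_N$ conjugating $1-(Z^m\otimes\varphi\alpha)$ into $1-(Z^m\otimes\alpha\varphi)$, from which the equality in $K_1(\Lambda/Z^N)$ follows. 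Your argument instead performs the standard block-matrix (Whitehead-lemma) proof of the Sylvester identity $[1-AB]=[1-BA]$ directly on the pair $M_1=(V/U_a)_N$, $M_2=(V/U_{a+b})_N$, where well-definedness of the off-diagonal entries $1\otimes\alpha: M_2\to M_1$ and $Z^m\otimes\varphi: M_1\to M_2$ is exactly the content of hypothesis (b). Both factorizations you write down check out, the unipotent factors have trivial class by multiplicativity along the obvious filtration $0\to M_i\to M_1\oplus M_2\to M_j\to 0$, and Lemma \ref{autom} together with the fact that $U_{a+b}$ (respectively $U_a$) is a nucleus for $\varphi\alpha$ (respectively $\alpha\varphi$) identifies the two surviving diagonal blocks with $[1-(\varphi\alpha)_m]_N$ and $[1-(\alpha\varphi)_m]_N$. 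Your concluding observation is also accurate: hypothesis (a) is genuinely not used in your argument, whereas it is load-bearing in the paper's; your proof therefore establishes the proposition under the weaker hypothesis (b) alone. This is a modest strengthening, though since the application in the proof of Theorem \ref{tf} always has $\alpha$ surjective, it makes no practical difference.
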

\begin{proof}
We first note that, since $V$ and $U_a$ are $R$-projective, the exact sequences
$$0\to\ker(\alpha)\to V\xrightarrow{\alpha}V\to 0\,\,\,\,\,\,\text{ and }\,\,\,\,\,\,0\to\ker(\alpha)\to\alpha^{-1}(U_a)\xrightarrow{\alpha}U_a\to 0$$
imply that the $R$-module $\alpha^{-1}(U_a)\cong\ker(\alpha)\oplus U_a$ is projective. Remark \ref{VGct} then implies that the quotients $\alpha^{-1}(U_a)/U_{a+b}$ and $V/\alpha^{-1}(U_a)$ are $R$-projective.

For each $m,N\geq 1$, one then has
\begin{align*}
\left[1-(\varphi\alpha)_m\right]_N&=\left[(V/U_{a+b})_N,1-(Z^m\otimes\varphi\alpha)\right]\\
&=\left[(V/U_{a+b})_N,1-(Z^m\otimes\varphi\alpha)\right]\cdot\left[(\alpha^{-1}(U_a)/U_{a+b})_N,1-(Z^m\otimes\varphi\alpha)\right]^{-1}\\
&=\left[\bigl(V/\alpha^{-1}(U_a)\bigr)_N,1-(Z^m\otimes\varphi\alpha)\right].
\end{align*}
The claimed equality now follows from the commutativity of the following 
square of $\Lambda/Z^N$-modules with bijective horizontal arrows:
\begin{equation*}\label{vasquareN}
\begin{CD}
\bigl(V/\alpha^{-1}(U_a)\bigr)_N @> 1\otimes\alpha >>  (V/U_a)_N \\
@V 1-(Z^m\otimes\varphi\alpha) VV @VV 1-(Z^m\otimes\alpha\varphi) V \\
\bigl(V/\alpha^{-1}(U_a)\bigr)_N @> 1\otimes\alpha >>  (V/U_a)_N. 
\end{CD}\end{equation*}
\end{proof}

\section{The refined trace formula}\label{5}

Taelman's trace formula \cite[Thm. 3]{TAM} provides an interpretation of Euler products associated to nuclear automorphisms, as a single determinant for their action on a compact (but not finitely generated) module over the formal power series ring $\Fq[\![Z]\!]$.

In this section we fix notation and setting as in Theorem \ref{MT} and prove a refined trace formula in the Whitehead group $K_1(\Fq[G][\![Z]\!])$. This formula provides an interpretation of $K$-theoretic Euler products as a single power series class of nuclear automorphisms, as introduced in Definition \ref{limitclass}.

In particular, our trace formula will imply that the Euler product $\Theta_{E,K/k}^{\cm}$ occurring in Theorem \ref{MT} converges to the element of $K_1(\Fi[G])$ obtained as the evaluation
\begin{equation*}\label{evaluation}\evt:K_1(\Lambda)\longrightarrow K_1(R(\!(Z)\!))\xrightarrow{Z\mapsto t^{-1}}K_1(R(\!(t^{-1})\!))=K_1(\Fi[G])\end{equation*} of a given power series class in $K_1(\Lambda)$.
Here, and in the sequel, we continue to abbreviate $\Fq[G][\![Z]\!]$ to $\Lambda$ and $\Fq[G]$ to $R$.

Before proceeding we note that $\Lambda$ is the closed unit ball in the ultrametric space $R(\!(Z)\!)$ and is thus an open subspace. By Lemma \ref{ctsopenlem}, $\evt$ is continuous and open. To apply Corollary \ref{convergencecor} (ii) to the inclusion $\Lambda\subset R(\!(Z)\!)\cong\FiG$, we use the following result, which is proved in \S \ref{Nsection} below. 

\begin{proposition}\label{otherKthm}The map $\evt$ is injective.\end{proposition}

In particular, $\ker(\evt)=\{1\}$ is closed in the profinite group $K_1(\Lambda)$.

\subsection{The compact module}

In this section we review the construction of an arithmetic family of pairs $(V_S^\cm,\cU)$ as in \S \ref{421} given in \cite{fghp}.
The refined trace formula of \S \ref{52} applies to a class of nuclear automorphisms of such pairs.
    
We fix a finite set $S$ of places of $k$ containing the set $S^\infty_k$ of places above the place at infinity of $\F$,
as well as a taming module $\cm$ for $K/k$. 

For each $v\in S$ we define a $G$-module $K_v:={\prod}_{w\mid v}K_w$, with $w$ running over the places of $K$ that divide $v$. We endow $K_v$ with the supremum of the norms on each $K_w$. If $v\in S^\infty_k$, resp. $v\in S\setminus S^\infty_k$, we assume that $t^{-1}$, resp. an uniformiser of $k_v$, has norm $q^{-1}$ in $K_v$.
We then define $\co_{k,S}[G]\{\tau\}$-modules $$K_S:={\prod}_{v\in S}K_v\,\,\,\,\,\,\,\,\text{ and }\,\,\,\,\,\,\,\,\cm_S:=\co_{k,S}\otimes_{\co_k}\cm,$$ where $\co_{k,S}$ is the ring of $S$-integers in $k$. We endow $K_S$ with the supremum norm.
Embedding $\cm_S$ into $K_S$ diagonally, we obtain an $\co_{k,S}[G]\{\tau\}$-module
$$V_S^\cm:=K_S/\cm_S.$$ 
We note that $\cm_S$ is discrete in $K_S$, because $\co_{k,S}\otimes_{\co_k}\co_K$ is. We write $\varpi$ for the projection $K_S\to V_S^\cm$.

\begin{remark}\label{RZonV}{\em For each $v\in S\setminus S^\infty_k$, we fix a uniformiser $\pi_v$ of $k_v$. For each $v\in S$, we endow $K_v$ with the  following action of $R[Z]$:
$$Z\cdot x:=\begin{cases}t^{-1}x,\,\,\,\,\,\,\,\,\,\,\,\,\,\,\,\,v\in  S^\infty_k,\\ \pi_vx,\,\,\,\,\,\,\,\,\,\,\,\,\,\,\,\,\,\,v\notin S^\infty_k.\end{cases}$$
Then $\cm_S$ is an $R[Z]$-submodule of $K_S$ and $V_S^\cm$ becomes a topological $R[Z]$-module. 

Let $\{\cW_v\}_{v\in S}$ be a family of open, $R$-projective $\co_{k_v}[G]\{\tau\}$-submodules of $K_v$ with the property that for each $v$, $\{Z^{i}\cdot\cW_v\}_{i\geq 0}$ is a basis of open neighbourhoods of zero in $K_v$. Then, since $\cm_S$ is discrete, there is $B\in\NN$ with the property that $(\prod_{v\in S}Z^B\cdot\cW_v)\cap\cm_S=\{0\}$ in $K_S$. Rescaling to $W_v:=Z^B\cdot\cW_v$, we may define an 
open $R[Z]$-submodule that is $R$-projective and has no $Z$-torsion by setting
\begin{equation}\label{varpi}W=\varpi\bigl({\prod}_{v\in S}W_v\bigr)\,\subseteq\,V_S^\cm.\end{equation}
Any such choice of family $\{\cW_v\}_{v\in S}$ thus defines data $U_i:=Z^{i}\cdot W$ as in Example \ref{RZ}.
}\end{remark}

\begin{definition}\label{tauzero}{\em We write $\co_{k,S}\{\tau\}\tau$ for the subset of $\co_{k,S}\{\tau\}$ comprising polynomials in $\tau$ with constant term $0$. 
We then denote the set of sequences in $\co_{k,S}\{\tau\}\tau$ by $\langle\tau\rangle:={\prod}_{j\geq 1}\co_{k,S}\{\tau\}\tau$.
In the sequel, we interpret any element of $\co_{k,S}\{\tau\}\tau$ as a continuous $R$-endomorphism of $V_S^\cm$ via its own left-action.
}\end{definition}

The next result is essentially proved by Ferrara, Green, Higgins and Popescu \cite{fghp}. 

\begin{proposition}\label{Uprop} The $R$-module $V_S^\cm$ is compact and projective, and there is a (non-empty) family $\cT_S^\cm$ of sequences of $R$-submodules of $V_S^\cm$ with the following properties.
\begin{itemize}
\item[(i)] Each sequence in $\cT_S^\cm$ is of the form $\{Z^{i}\cdot W\}_{i\geq 1}$ for some
$R$-projective submodule $W$ of $V_S^\cm$ as in (\ref{varpi}).
In addition, there exists $a_W\in\NN$ with the property that
$$Z^{a_W+i}\cdot\varpi(B_{K_S})\subseteq Z^{i}\cdot W\subseteq Z^{i}\cdot\varpi(B_{K_S})$$ for large enough $i$, where $B_{K_S}$ denotes the closed unit ball in $K_S$.

\item[(ii)] Any element of $\co_{k,S}\{\tau\}\tau$ is locally contracting with respect to any pair $(V_S^\cm,\cU)$ with $\cU$ in $\cT^\cm_S$.
\item[(iii)] If $\Phi$ is a sequence in $\langle\tau\rangle$, the power series class $\left[1+\Phi\mid V_S^\cm\right]$ of $\Phi$ in $K_1(\Lambda)$ is independent of the sequence in $\cT_S^\cm$ with respect to which it is computed.
\item[(iv)] Let $\p_0$ be a place of $k$ that is not in $S$ and set $S':=S\cup\{\p_0\}$. Then for any $\Phi$ in $\langle\tau\rangle$, computing the power series classes on the left-hand side with respect to sequences in $\cT^\cm_{S'}$ and in $\cT^\cm_S$ respectively (and the right-hand side as in Remark \ref{fgV}), one has
$$\left[1+\Phi\mid V_{S'}^\cm\right]\cdot\left[1+\Phi\mid V_S^\cm\right]^{-1}=\left[1+\Phi\mid \cm/\p_0\cm\right].$$
\end{itemize}
\end{proposition}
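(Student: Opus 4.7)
The plan is to adapt the corresponding arguments in \cite[\S 2]{fghp}, which treat the case of abelian $G$, to our general finite group setting. The main new input required is a careful treatment of $R$-projectivity in the non-commutative case.

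For the compactness and $R$-projectivity of $V_S^\cm$, compactness follows classically from $\cm_S$ being diagonally embedded as a discrete cocompact subgroup of $K_S$. For projectivity, the Normal Basis Theorem yields that $K$ is free of rank one over $k[G]$, so each $K_v = k_v \otimes_k K$ is free over $k_v[G]$ and $K_S$ is $R$-projective. Since $\cm$ is $\co_k[G]$-projective by the definition of a taming module, $\cm_S$ is likewise $R$-projective, and Remark \ref{VGct} applied to the short exact sequence $0 \to \cm_S \to K_S \to V_S^\cm \to 0$ then yields the $R$-projectivity of $V_S^\cm$.

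Next, I would define $\cT_S^\cm$ as the set of sequences $\{Z^{i}\cdot W\}_{i\geq 1}$ where $W = \varpi(\prod_{v \in S} W_v)$ for open $\co_{k_v}[G]\{\tau\}$-submodules $W_v \subseteq K_v$ that are commensurable with the unit ball of $K_v$ and chosen so that $W$ is $R$-projective (for example, by taking products with the $\co_{k_v}[G]\{\tau\}$-span of a normal integral basis at each place). The sandwich condition in (i) is then automatic with $a_W$ measuring the commensurability. Property (ii) reduces to a direct computation: the commutation relation $\tau \cdot Z^{i} = Z^{iq}\tau$ at each place in $S$, combined with the boundedness of every coefficient of any $\varphi \in \co_{k,S}\{\tau\}\tau$ (after possibly enlarging $W$ to absorb finitely many of them), gives $\varphi(Z^{i}\cdot W) \subseteq Z^{i+1}\cdot W$ for $i$ sufficiently large. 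Property (iii) then follows from a nesting argument modeled on the proof of Lemma \ref{autom}: given two sequences in $\cT_S^\cm$ defined by submodules $W$ and $W'$, both corresponding power series classes reduce to the class computed with respect to the common refinement $\varpi(\prod_v (W_v \cap W'_v))$, which again belongs to $\cT_S^\cm$.

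The main obstacle is property (iv). Here the plan is to construct a short exact sequence of $R[Z]\{\tau\}$-modules relating $V_S^\cm$ and $V_{S'}^\cm$ whose third term captures the local contribution at $\p_0$, adapting the strong-approximation arguments used in the abelian case of \cite{fghp}. Applying Remark \ref{ses} then reduces the proof to identifying the power series class of this third term with $[1+\Phi \mid \cm/\p_0\cm]$. Since the third term carries a natural $\pi_{\p_0}$-adic filtration whose graded pieces are each isomorphic to $\cm/\p_0\cm$, the locally contracting nature of $\Phi$ (which forces all but finitely many graded pieces to contribute trivially to the power series class) reduces the computation to the desired single factor. The delicate part will be to verify that the filtration is indeed compatible with a suitable choice of sequence in $\cT_{S'}^\cm$ and that the surviving contribution is precisely one copy of $[1+\Phi \mid \cm/\p_0\cm]$, which is where the adaptation of the arguments of \cite{fghp} to our non-abelian setting will require the most care.
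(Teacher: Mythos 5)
Your overall strategy matches the paper's sketch essentially step for step: the $R$-projectivity of $V_S^\cm$ via the normal basis theorem (the paper phrases the induction step via Shapiro's Lemma, but this is the same argument as yours), the definition of $\cT_S^\cm$ by lifting the construction of \cite[\S 2.3.2]{fghp}, the verification of (ii) via the commutation relation with $\tau$, and the use of Remark \ref{ses} together with the exact sequence from \cite[Lem.\ 3.0.1]{fghp} for (iv). Two points in your outline are imprecise in ways worth flagging.

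For claim (iii), comparing to a common refinement is the right instinct, but the refinement by itself does not suffice: you must further arrange that the $\varphi_j$ \emph{shrink} the larger nucleus into the smaller one. Concretely, the paper (following the argument of \cite[Lem.\ 2.3.10]{fghp}) produces nuclei $U_i\in\cU$ and $U_k'\in\cU'$ with both $U_i\supset U_k'$ \emph{and} $\varphi_j(U_i)\subset U_k'$ for every $j<N$; the second condition is what forces $(1+\Phi)_N$ to be the identity on $(U_i/U_k')_N$, and hence the two classes to agree. Without this the quotient piece does not visibly contribute trivially.

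For claim (iv), the orientation of your exact sequence is off: in \cite[Lem.\ 3.0.1]{fghp} the local contribution $\cm_{\p_0}$ is the \emph{kernel} of the surjection $V_{S'}^\cm\twoheadrightarrow V_S^\cm$, not the cokernel (``third term'') as you describe. Moreover, the identification $\left[1+\Phi\mid\cm_{\p_0}\right]=\left[1+\Phi\mid\cm/\p_0\cm\right]$ is simpler than the filtration heuristic you propose. Since $\p_0\notin S$, every $\varphi_j\in\co_{k,S}\{\tau\}\tau$ has coefficients in $\co_{\p_0}$, and the commutation relation gives $\varphi_j\bigl(\p_0^i\cm_{\p_0}\bigr)\subseteq\p_0^{i+1}\cm_{\p_0}$ for all $i\geq 1$. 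Thus $\p_0\cm_{\p_0}$ is a common nucleus for \emph{all} $\varphi_j$ simultaneously, and the power series class is computed directly on the single quotient $\cm_{\p_0}/\p_0\cm_{\p_0}=\cm/\p_0\cm$; no graded-piece analysis is needed.
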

\begin{proof}
Since all the above claims follow in a straightforward manner from arguments in \cite{fghp}, we limit ourselves to providing a sketch of the proof.

The fact that $K_S$ is a projective $R$-module follows upon combining the normal basis theorem with Shapiro's Lemma. The fact that the $R$-module $V_S^\cm$ is compact and projective then follows directly from the properties of taming modules.

We then let $\cT_S^\cm$ comprise any sequences $\cU$ constructed as in \S 2.3.2 of loc. cit. Any such sequences satisfy the first statement of claim (i) by construction, the second statement of claim (i) by the argument in Cor. A.2.5 (see also (5.1.1)) of loc. cit., and then also claim (ii) by the argument in Lem. 2.3.3 of loc. cit. (or of Example \ref{RZ}).

To prove claim (iii) we fix $\Phi=(\varphi_j)_{j\geq 1}$ in $\langle\tau\rangle$, as well as sequences $\cU=\{U_i\}_{i\geq 1}$ and $\cU'=\{U_i'\}_{i\geq 1}$ in $\cT_S^\cm$, and an integer $N\geq 1$. It is enough to show that in $K_1(\Lambda/Z^N)$ 
one has
$$\left[(V_S^\cm/U_i)_N,(1+\Phi)_N\right]\,=\,\left[(V_S^\cm/U'_k)_N,(1+\Phi)_N\right]$$ for some common nuclei $U_i$ and $U'_k$ for $\varphi_1,\ldots,\varphi_{N-1}$.
But the argument of Lemma 2.3.10 of loc. cit. shows that there exist common nuclei $U_i$ and $U'_k$ with
$$U_i\,\supset\,U'_k\,\,\,\text{ and }\,\,\,\varphi_j(U_i)\,\subset\,U'_k\,\,\,\text{ for every }j<N.$$
Using this choice of nuclei we find the required equality
\begin{align*}\left[(V_S^\cm/U'_k)_N,(1+\Phi)_N\right]=&\left[(V_S^\cm/U_i)_N,(1+\Phi)_N\right]\cdot\left[(U_i/U'_k)_N,(1+\Phi)_N\right]\\ =&\left[(V_S^\cm/U_i)_N,(1+\Phi)_N\right].\end{align*} 

To prove claim (iv) we write $\co_{\p_0}$ for the valuation ring of the completion of $k$ at $\p_0$ and $\cm_{\p_0}:=\co_{\p_0}\otimes_{\co_k}\cm$ for the $\p_0$-adic completion of $\cm$. Then the proof of Lemma 3.0.1 of loc. cit. gives a short exact sequence of compact $\co_{k,S}[G]\{\tau\}$-modules
\begin{center}
    \begin{tikzcd}
    0 \ar[r] & \cm_{\p_0} \ar[r,"\psi"]& V_{S'}^\cm \ar[r,"\eta"] & V_S^\cm \ar[r] & 0,
    \end{tikzcd}
\end{center}
and a sequence $\cU'=\{U'_i\}_{i\geq 1}$ in $\cT^\cm_{S'}$ with the property that
$\psi^{-1}(U'_i)=\p_0^{i}\cdot\cm_{\p_0}$ 
and that $\{\eta(U_i')\}_{i\geq 1}$ belongs to $\cT^\cm_S$.

By Remark \ref{ses} one then finds that
$$\left[1+\Phi\mid V_{S'}^\cm\right]\cdot\left[1+\Phi\mid V_S^\cm\right]^{-1}=\left[1+\Phi\mid \cm_{\p_0}\right],$$
computing the power series class on the right-hand side with respect to the sequence $\{\p_0^{i}\cdot\cm_{\p_0}\}_{i\geq 1}$. The claimed equality is thus valid because $\p_0\cdot\cm_{\p_0}$ is a common nucleus for the action of all $\varphi_j\in \co_{k,S}\{\tau\}\tau$ on $\cm_{\p_0}$, by the argument of Example \ref{RZ}.
\end{proof}

\subsection{The trace formula and convergence of the \texorpdfstring{$L$}{L}-value}\label{52}

We may now state our refined trace formula for nuclear $\Lambda$-automorphisms in $\langle\tau\rangle$ (see Definition \ref{tauzero}). This result constitutes a non-abelian generalisation of \cite[Thm. 3.0.2]{fghp}. 

\begin{theorem}\label{tf} Let $S$ be a finite set of places of $k$ containing $S^\infty_k$ and let $\cm$ be a taming module for $K/k$.
Then for each nuclear $\Lambda$-automorphism $\Phi$ of $V_S^\cm$ in the set $\langle\tau\rangle$, the $S$-truncated Euler product
$${\prod}_{\p\notin S}\left[1+\Phi\mid \cm/\p\cm\right]$$
converges in $K_1(\Lambda)$ to the inverse of the power series class $\left[1+\Phi\mid V_S^\cm\right]$, computed with respect to any sequence in $\cT_S^\cm$. This limit is unique in $K_1(\Lambda)$ and, moreover, is independent of the order of the Euler factors.
\end{theorem}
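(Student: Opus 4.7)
The strategy is to combine the topological structure of $K_1(\Lambda)$ given by Proposition \ref{151} with the place-adding identity of Proposition \ref{Uprop}(iv). Proposition \ref{151} provides a topological isomorphism $K_1(\Lambda) \cong \varprojlim_{N \geq 1} K_1(\Lambda/Z^N)$, so convergence in $K_1(\Lambda)$ may be tested separately at each level. Fix $N \geq 1$. Iterated application of Proposition \ref{Uprop}(iv), together with the independence statement of Proposition \ref{Uprop}(iii), gives for every finite $S' \supseteq S$ the identity
$$\prod_{\p \in S' \setminus S} \bigl[1+\Phi \mid \cm/\p\cm\bigr]_N \cdot \bigl[1+\Phi \mid V_S^\cm\bigr]_N \,=\, \bigl[1+\Phi \mid V_{S'}^\cm\bigr]_N$$
in $K_1(\Lambda/Z^N)$. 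The theorem is therefore equivalent to the assertion that, for each $N$, there exists a finite set of places $S_N \supseteq S$ with the property that $\bigl[1+\Phi \mid V_{S'}^\cm\bigr]_N = 1$ in $K_1(\Lambda/Z^N)$ for every finite $S' \supseteq S_N$.

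Establishing this vanishing is the heart of the proof and the main obstacle. The plan is to mimic, at the level of $K_1$-classes, the determinantal argument underlying the trace formulas of Taelman \cite[Thm. 3]{TAM} and Ferrara-Green-Higgins-Popescu \cite[Thm. 3.0.2]{fghp}. For $S'$ large enough that all $\tau$-coefficients of the finitely many relevant $\varphi_1, \ldots, \varphi_{N-1}$ are integral away from $S'$, I would choose a nucleus $U \in \cT^\cm_{S'}$ adapted to this data and then use the multiplicativity of power series classes in short exact sequences (Remark \ref{ses}) to split $\bigl[1+\Phi \mid V_{S'}^\cm\bigr]_N$ along a finite $(1+\Phi)_N$-stable filtration of $(V_{S'}^\cm / U)_N$. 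On each successive subquotient, the constant-term vanishing of every $\varphi_j$ (ensured by $\Phi \in \langle \tau \rangle$) together with the commutator identity of Proposition \ref{varphialpha}, applied with one factor the contracting $Z$-action and the other the relevant $\varphi_j$, should force the induced $(1+\Phi)_N$ to coincide with the identity and hence represent the trivial class in $K_1(\Lambda/Z^N)$. The core technical difficulty is the replacement of the determinantal computations of \cite{TAM,fghp} by arguments valid directly for $K_1$-classes of general (non-abelian) group rings, where one cannot simply verify triviality by evaluating a reduced norm.

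Once this vanishing is established, the telescoping identity immediately gives convergence at each level $N$ to the target $\bigl[1+\Phi \mid V_S^\cm\bigr]_N^{-1}$, and Proposition \ref{151} assembles these level-wise convergences into the required convergence in $K_1(\Lambda)$, with global limit $\bigl[1+\Phi \mid V_S^\cm\bigr]^{-1}$, as required.
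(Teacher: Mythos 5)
Your reduction scaffolding is correct and matches the paper: Proposition \ref{151} lets you test convergence level-by-level in $K_1(\Lambda/Z^N)$, and iterating Proposition \ref{Uprop}(iv) gives the telescoping identity, so the theorem reduces to producing, for each fixed $N$, a finite $S_N\supseteq S$ such that $\bigl[1+\Phi\mid V_{S'}^{\cm}\bigr]_N=1$ for all finite $S'\supseteq S_N$. The paper's proof establishes exactly this, by choosing $T:=S\cup\{\p\mid [\co_{k,S}/\p:\Fq]<D\}$ for a suitable $D>N\cdot\max_{j<N}\deg_\tau(\varphi_j)$ and showing both that $\bigl[1+\Phi\mid V_T^{\cm}\bigr]_N$ is trivial and that each local factor $\bigl[(1+\Phi)_N\mid(\cm/\p\cm)_N\bigr]$ with $\p\notin T$ is trivial.

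However, the plan you give for this core vanishing step does not work and is not what the paper does. You propose to split $\bigl[1+\Phi\mid V_{S'}^{\cm}\bigr]_N$ along a $(1+\Phi)_N$-stable filtration and argue that the commutator identity of Proposition \ref{varphialpha} ``forces the induced $(1+\Phi)_N$ to coincide with the identity'' on each subquotient. That proposition asserts an equality of two $K_1$-classes, $\bigl[1-(\varphi\alpha)_m\bigr]=\bigl[1-(\alpha\varphi)_m\bigr]$; it never shows that any particular automorphism is the identity, so it cannot deliver the triviality you need on subquotients. Moreover there is no natural filtration of $(V_{S'}^{\cm}/U)_N$ on whose graded pieces $(1+\Phi)_N$ is the identity. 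What is actually required here is Anderson's trick (\cite[Prop. 9]{Anderson}, as used in \cite[Thm. 3]{TAM}): via the multiplicativity of Remark \ref{multiplicativity} — not the exact-sequence additivity of Remark \ref{ses}, which you cite instead — one rewrites the relevant class as a product of terms of the form $\bigl[1-((s\tau^d)\alpha)_m\bigr]$, and then cyclically shifts factors using Proposition \ref{varphialpha} to conclude triviality. This is precisely where the paper replaces the determinantal computations of \cite{TAM,fghp} by $K_1$-valued arguments, and it relies on an explicit verification of hypothesis (b) of Proposition \ref{varphialpha} for $V=V_T^{\cm}$ through valuation estimates (the computation producing the integers $a$ and $b$). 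Your proposal neither engages with the cyclic-shift structure of Anderson's argument nor performs these valuation estimates, and so leaves the central step of the proof open.
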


Before proving Theorem \ref{tf} we state its consequence for $\Theta_{E,K/k}^{\cm}$. Set $V_\infty^\cm:=V_{S^\infty_k}^\cm$. 

\begin{corollary}\label{tfcor} Let $\cm$ be a taming module for $K/k$. Then the Euler product $\Theta_{E,K/k}^{\cm}$ converges in $K_1(\Fi[G])$ to $$\evt\left(\left[1+\Phi_E\mid V_\infty^\cm\right]\right),$$ where $\Phi_E=(\varphi_{j,E})_{j\geq 1}$ with $\varphi_{j,E}=(t-\phi_E(t))t^{j-1}$. This limit is unique in $K_1(\Fi[G])$ and, moreover, is independent of the order of the Euler factors.
\end{corollary}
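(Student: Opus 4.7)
The plan is to deduce the corollary from Theorem \ref{tf} applied to $S := S_k^\infty$ and the sequence $\Phi_E$, then to transfer the resulting convergence in $K_1(\Lambda)$ to $K_1(\Fi[G])$ along the continuous map $\evt$. I first verify that $\Phi_E$ lies in $\langle\tau\rangle$. Since $E$ is a Drinfeld module, $\phi_E(t)-t$ belongs to the left ideal $\co_k\{\tau\}\tau$, and the commutation relation $\tau\cdot x = x^q\tau$ shows that right-multiplication by $t^{j-1}$ in $\co_k\{\tau\}$ preserves this ideal. Hence each $\varphi_{j,E}=(t-\phi_E(t))t^{j-1}$ lies in $\co_k\{\tau\}\tau$, as required.

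For each prime $\p \notin S_k^\infty$, the next step is to compute $[1+\Phi_E \mid \cm/\p\cm]$ in $K_1(\Lambda)$ explicitly. By the taming module property (Remark \ref{TMremark}) the $R$-module $\cm/\p\cm$ is finitely generated and free, so by Remark \ref{fgclass} the class in question is that of the $\Lambda$-automorphism $\sum_{j\geq 0} Z^j\otimes \tilde\varphi_{j,E}$ of $\Lambda\otimes_R(\cm/\p\cm)$. Writing $t$ for multiplication by $t\in A$ on $\cm/\p\cm$, and $\phi_E(t)$ for multiplication by $\phi_E(t)$ (which gives the underlying module the structure of $E(\cm/\p\cm)$), the $\Lambda$-endomorphism $1-tZ$ of $\Lambda\otimes_R(\cm/\p\cm)$ is invertible (with inverse $\sum_{n\geq 0}(tZ)^n$), so summing the resulting geometric series yields
\[
\sum_{j\geq 0} Z^j\otimes \tilde\varphi_{j,E} \;=\; 1+(t-\phi_E(t))Z(1-tZ)^{-1} \;=\; (1-\phi_E(t)Z)(1-tZ)^{-1},
\]
and therefore
\[
[1+\Phi_E \mid \cm/\p\cm] \;=\; [1-\phi_E(t)Z]\cdot [1-tZ]^{-1}
\]
in $K_1(\Lambda)$.

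The main obstacle is now to identify the image of each factor under $\evt$ with the corresponding characteristic classes. Applying $\evt$ sends $[1-tZ]$ to the class in $K_1(\Fi[G])$ of the automorphism $1-t^{-1}\cdot\mathrm{mult}_t = t^{-1}\cdot((t\otimes 1)-(1\otimes t))$ of $\Fi[G]\otimes_R(\cm/\p\cm)$; by Lemma \ref{43.4}, the class of the second factor is precisely $c_G(\cm/\p\cm)$, while the scalar contribution $[t^{-1}]$ depends only on the $R$-rank of $\cm/\p\cm$. An identical analysis applied to $\phi_E(t)$ in place of $t$ identifies $\evt([1-\phi_E(t)Z])$ with $[t^{-1}]\cdot c_G(E(\cm/\p\cm))$, with the same scalar contribution since $\cm/\p\cm$ and $E(\cm/\p\cm)$ coincide as $\Fq[G]$-modules. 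Taking the ratio, the scalar factors cancel, giving
\[
\evt([1+\Phi_E \mid \cm/\p\cm]) \;=\; c_G(E(\cm/\p\cm))\cdot c_G(\cm/\p\cm)^{-1}.
\]

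Finally, Theorem \ref{tf} applied to $S=S_k^\infty$ gives the convergence in $K_1(\Lambda)$ of $\prod_{\p\notin S_k^\infty}[1+\Phi_E\mid \cm/\p\cm]$ to $[1+\Phi_E\mid V_\infty^\cm]^{-1}$. The map $\evt$ is induced by a continuous homomorphism of semilocal topological rings (since $Z^n\mapsto t^{-n}$ tends to $0$), hence it is continuous on $K_1$ by the definition of the topology on Whitehead groups from \S2.1. Applying $\evt$ and using the identification of each Euler factor above, one concludes that $\prod_{\p\notin S_k^\infty} c_G(E(\cm/\p\cm))\cdot c_G(\cm/\p\cm)^{-1} = (\Theta_{E,K/k}^\cm)^{-1}$ converges to $\evt([1+\Phi_E\mid V_\infty^\cm])^{-1}$ in $K_1(\Fi[G])$; inverting yields the desired convergence of $\Theta_{E,K/k}^\cm$ to $\evt([1+\Phi_E\mid V_\infty^\cm])$.
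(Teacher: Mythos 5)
Your proof is correct and takes essentially the same route as the paper: you factorize $1+\Phi_E$ acting on the finite local module as $(1-\phi_E(t)Z)(1-tZ)^{-1}$, push this through $\evt$, and identify each factor (after cancelling the scalar $t^{-1}$ contributions) with the corresponding characteristic class via Lemma~\ref{43.4}, before invoking Theorem~\ref{tf} with $S=S_k^\infty$ and the continuity of $\evt$. The only difference is cosmetic: you spell out the verification that $\Phi_E\in\langle\tau\rangle$ and the continuity of $\evt$, both of which the paper leaves implicit.
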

\begin{proof}
We recall that $\Spec(\co_k)$ is a countable set. We fix an order and, for each $i\in\NN$, write $x_i\in K_1(\Lambda)$ for $\left[1+\Phi\mid \cm/\p_i\cm\right]$. Theorem \ref{tf} implies that the sequence of partial products $(\prod_{i=1}^{i=j}x_i)_{j\in\NN}$ converges in $K_1(\Lambda)$ to (a unique limit) $\prod_{i\in\NN}x_i=\left[1+\Phi\mid V_\infty^\cm\right]$ (independent of the chosen order).

By Proposition \ref{otherKthm} and Corollary \ref{convergencecor}(ii), it follows that the sequence of partial products $(\prod_{i=1}^{i=j}\evt(x_i))_{j\in\NN}$ converges in $K_1(\FiG)$ to a unique limit, that is independent of the chosen order, and is moreover equal to $\evt([1+\Phi\mid V_\infty^\cm])$.
Given these facts,
it is enough to show that, for any $\p\notin S^\infty_k$, one has
\begin{equation}\label{enoughlocal}\evt\left(\left[1+\Phi_E\mid \cm/\p\cm\right]\right)=c_G\left(\cm/\p\cm\right)^{-1}\cdot c_G\left(E\bigl(\cm/\p\cm\bigr)\right)\end{equation}
in $K_1(\Fi[G])$. We fix such a place $\p$. 

Now, a direct computation (as in the proof of Lemma \ref{autom}) gives for each $N\geq 2$ the following equality of continuous $\Lambda/Z^N$-automorphisms of $(\cm/\p\cm)_N$:
\begin{equation}\label{pwfact}\bigl(\id-(Z\otimes t)\bigr)^{-1} \circ \bigl(\id-(Z\otimes\phi_E(t))\bigr)=\id+{\sum}_{j=1}^{j=N-1}\bigl(Z^j\otimes\varphi_{j,E}\bigr).\end{equation}

We set $(\cm/\p\cm)_\Lambda:=\Lambda\otimes_{R}\cm/\p\cm$.
Writing $$\alpha_t:=\id-(Z\otimes t)\,\,\,\,\,\,\text{  and  }\,\,\,\,\,\,\alpha_{\phi_E(t)}:=\id-(Z\otimes\phi_E(t))$$ for the respective $\Lambda$-automorphisms of the finitely generated, projective module $(\cm/\p\cm)_\Lambda$, we thus find that
$$\left[1+\Phi_E\mid \cm/\p\cm\right]=\left[(\cm/\p\cm)_\Lambda,\,\alpha_t\right]^{-1}\cdot\left[(\cm/\p\cm)_\Lambda,\,\alpha_{\phi_E(t)}\right]$$ in $K_1(\Lambda)$. Using the notation of Lemma \ref{43.4}, this equality implies that
\begin{align*}&\evt\left(\left[1+\Phi_E\mid \cm/\p\cm\right]\right)\\
=&\left[(\cm/\p\cm)'_{\Fi},\,\id-(t^{-1}\otimes t)\right]^{-1}\cdot\left[(\cm/\p\cm)'_{\Fi},\,\id-(t^{-1}\otimes\phi_E(t))\right]\\
=&\left[(\cm/\p\cm)'_{\Fi},\,(t\otimes\id)-(\id\otimes t)\right]^{-1}\cdot\left[(\cm/\p\cm)'_{\Fi},\,(t\otimes\id)-(\id\otimes\phi_E(t))\right]\\
=&c_G\left(\cm/\p\cm\right)^{-1}\cdot c_G\left(E\bigl(\cm/\p\cm\bigr)\right)
\end{align*}
in $K_1(\Fi[G])$. We have now verified the required equality (\ref{enoughlocal}). 
\end{proof}

In the rest of this section we prove Theorem \ref{tf}, by adapting the proofs of \cite[Thm. 3]{TAM} and \cite[Thm. 3.0.2]{fghp}. We recall again that $\Spec(\co_k)$ is a countable set.

We fix $N\geq 1$. By (the proof of) Corollary \ref{convergencecor}(i), it is enough to prove that 
$${\prod}_{\p\notin S}\left[(1+\Phi)_N\mid \left(\cm/\p\cm\right)_N\right]$$
converges in $K_1(\Lambda/Z^N)$ to the inverse of $\left[1+\Phi\mid V_S^\cm\right]_N$, computed with respect to any sequence in $\cT_S^\cm$.

We write $\Phi=(\varphi_j)_{j\geq 1}$ and fix $D\,>\,N\cdot{\rm max}_{j<N}\{\deg_\tau(\varphi_j)\}$. We then also set
$$T\,:=\,S\cup\{\p\in\Spec(\co_{k,S}) \mid \,[\co_{k,S}/\p:\Fq]<D\}.$$
We note that the set $T$ is finite.
By Proposition \ref{Uprop} (iv), it hence suffices to prove that
\begin{equation}\label{enoughT}{\prod}_{\p\notin T}\left[(1+\Phi)_N\mid \left(\cm/\p\cm\right)_N\right]\end{equation}
converges in $K_1(\Lambda/Z^N)$ to the inverse of $\left[1+\Phi\mid V_T^\cm\right]_N$, computed with respect to any sequence in $\cT_T^\cm$. In fact, we claim that every factor in the product (\ref{enoughT}) is trivial, and that so is $\left[1+\Phi\mid V_T^\cm\right]_N$.

Now the argument of \cite[Thm. 3]{TAM} (which relies on a trick due to Anderson \cite[Prop. 9]{Anderson}), together with the multiplicativity property in Remark \ref{multiplicativity}, reduce these claims to showing that
\begin{equation}\label{19}\left[1-((s\tau^d)\alpha)_m\mid\cm/\p\cm\right]\,=\,\left[1-(\alpha(s\tau^d))_m\mid \cm/\p\cm\right]\end{equation}
and that
\begin{equation}\label{20}\left[1-((s\tau^d)\alpha)_m\mid V_T^\cm\right]\,=\,\left[1-(\alpha(s\tau^d))_m\mid V_T^\cm\right]\end{equation} for $\p\notin T$, any $\alpha,s\in\co_{k,T}$ and any $d,m\geq 1$. 

To prove (\ref{19}) we recall from Remark \ref{TMremark3} that $\cm$ is a locally-free $\co_k[G]$-module of constant local rank 1. Therefore,
$$\frac{\cm}{\p\cm}\cong\frac{\cm_{(\p)}}{\p\cm_{(\p)}}\cong\frac{\co_{k,(\p)}[G]}{\p\co_{k,(\p)}[G]}\cong\left(\frac{\co_{k,(\p)}}{\p\co_{k,(\p)}}\right)[G]\cong\left(\frac{\co_{k,T}}{\p\co_{k,T}}\right)[G].$$
Thus, if $\alpha$ belongs to $\p\co_{k,T}$, then both of the endomorphisms $(s\tau^d)\alpha$ and $\alpha(s\tau^d)$ of $\cm/\p\cm$ are trivial, so that both sides of (\ref{19}) are trivial. On the other hand, if $\alpha\notin\p\co_{k,T}$, then its action defines an automorphism of $\cm/\p\cm$, so (\ref{19}) follows from Proposition \ref{varphialpha} applied to $V=\cm/\p\cm$, $\alpha=\alpha$ and $\varphi=s\tau^d$.

To deduce (\ref{20}) from Proposition \ref{varphialpha}, we must fix $\cU=\{Z^{i}W\}_{i\geq 1}$ in $\cT_T^\cm$ and, after noting that $(s\tau^d)\alpha$ and $\alpha(s\tau^d)$ are both locally contracting with respect to any such choice (by Proposition \ref{Uprop} (ii)), show that they have a common nucleus $Z^{a}W$ which satisfies
\begin{equation}\label{enougha+b}(s\tau^d)(Z^{a}W)\subseteq Z^{a+b}W\,\,\,\text{ and }\,\,\,\alpha(Z^{a+b}W)\subseteq Z^{a}W\end{equation}
for some $b$.
We abuse notation to write $v$ for the normalised valuation at a place $v\in T$, and set
$$e_v:=\begin{cases}v(t^{-1}),\,\,\,\,\,\,\,\,v\in S_k^\infty,\\ 1,\,\,\,\,\,\,\,\,\,\,\,\,\,\,\,\,\,\,\,\,\,v\in T\setminus S_k^\infty. 
\end{cases}$$
We then fix a common nucleus $Z^{a}W$ for $(s\tau^d)\alpha$ and $\alpha(s\tau^d)$, with $a$ large enough that
$$b\,:=\,{\rm min}_{v\in T}\left\{a+\frac{v(s)}{e_v}\right\}\,\geq\,{\rm max}\left\{1,\,{\rm max}_{v\in T}\left\{-\frac{v(\alpha)}{e_v}\right\}\right\}.$$ 

We recall from (\ref{varpi}) that we may fix a decomposition $W=\prod_{v\in T}W_v$, where each $W_v$ is an $\co_{k_v}[G]\{\tau\}$-module.
For any $x=(x_v)\in W$ one then has
$$s\tau^d(Z^{a}x)=\bigl((st^{-2a}t^{-(aq^d-2a)}\tau^d(x_v))_{v\in S_k^\infty},(s\pi_u^{2a}\pi_u^{aq^d-2a}\tau^d(x_u))_{u\in T\setminus S_k^\infty}\bigr).$$
Since we have inequalities $$v(st^{-2a})=v(s)+2av(t^{-1})\geq (a+b)v(t^{-1}),\,\,\,\,\,\,\,\,\,\,u(s\pi_u^{2a})=u(s)+2a\geq a+b$$ and $aq^d\geq 2a$, the first inclusion of (\ref{enougha+b}) is valid.

For $x=(x_v)\in W$ one also has
$$\alpha(Z^{a+b}x)=Z^{a}\bigl((\alpha t^{-b}x_v)_{v\in S_k^\infty},(\alpha\pi_u^{b}x_u)_{u\in T\setminus S_k^\infty}\bigr).$$
Since $v(\alpha t^{-b})=v(\alpha)+bv(t^{-1})\geq 0$ and $u(\alpha\pi_u^{b})=u(\alpha)+b\geq 0$, the second inclusion of (\ref{enougha+b}) is also valid.

We finally note that by Corollary \ref{convergencecor}(ii), applied to the identity map $\Lambda\to\Lambda$, the limit $\left[1+\Phi\mid V_S^\cm\right]$ is unique in $K_1(\Lambda)$ and independent of the order of the Euler factors.
This completes the proof of Theorem \ref{tf}.

\section{The proof of Theorem \ref{MT}}\label{proofMT}

Before proceeding to prove Theorem \ref{MT}, we find it convenient to introduce the following general notion of $K$-theoretic Fitting classes.

\begin{definition}\label{KFdef}{\em Let $G$ be a finite group. Let $M$ be an $A[G]$-module that is both finite and $G$-c.-t. 
The \emph{$K$-theoretic Fitting class} of $M$ in $K_0(A[G],\FiG)$ is $$\KFit_G(M):=\partial_G(c_G(M)).$$ 
}\end{definition}

\begin{remark}\label{rationalFit}{\em In fact, $\KFit_G(M)$ can also be regarded as the image of $c_G(M)$ in $K_0(A[G],\F[G])$. We note that $\partial_G$ factors through the injective map $K_0(A[G],\F[G])\to K_0(A[G],\FiG)$}.
\end{remark}

The following result justifies our terminology.

\begin{lemma}\label{Fitindependence} Let $M$ be an $A[G]$-module that is both finite and $G$-c.-t. Let
\begin{equation}\label{categoryT}0\to P\xrightarrow{\alpha}Q\to M\to0\end{equation} be a short exact sequence of $A[G]$-modules, in which both $P$ and $Q$ are finitely generated and projective.
Then in $K_0(A[G],\F[G])$ one has
$$[P,\alpha_\F,Q]\,=\,\KFit_G(M).$$
\end{lemma}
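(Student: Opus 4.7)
The plan is to identify both sides of the claimed equality as (negatives of) refined Euler characteristics of the two natural length-one perfect resolutions of $M$, and then invoke the fact that refined Euler characteristics depend only on the isomorphism class in $D^{\rm p}(A[G])$.

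First I would recall that, since $M$ is finite and $G$-cohomologically-trivial, the remark following the definition of characteristic classes shows that $M$ is $\Fq[G]$-projective, hence that $M':=A[G]\otimes_{\Fq[G]}M$ is a finitely generated projective $A[G]$-module. Using the explicit formula for $\partial_G$ recalled in Remark \ref{impartial} (extended from free to projective modules by stabilization by a complementary summand), this would identify
\[\KFit_G(M)\,=\,\partial_G\bigl([M'_\F,\tau_{M,\F}]\bigr)\,=\,\bigl[M',\tau_{M,\F},M'\bigr]\]
in $K_0(A[G],\F[G])$, where $\tau_{M,\F}$ is an isomorphism by Lemma \ref{43.4}.

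Next I would consider the two length-one complexes of finitely generated projective $A[G]$-modules
\[C_1\,:=\,\bigl[P\xrightarrow{\alpha}Q\bigr]\,\,\,\,\text{and}\,\,\,\,C_2\,:=\,\bigl[M'\xrightarrow{\tau_M}M'\bigr],\]
both placed with first term in degree one. Both sit in $D^{\rm p}(A[G])$ and, by (\ref{categoryT}) and Lemma \ref{43.4}, have vanishing cohomology in degree one and cohomology isomorphic to $M$ in degree two. Lifting the identity on $M$ via the projectivity of $P$ and the surjectivity of $M'\to M$ produces a chain map $C_1\to C_2$ which is necessarily a quasi-isomorphism, so $C_1$ and $C_2$ are isomorphic in $D^{\rm p}(A[G])$. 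Since $M_\F=0$ (as $M$ is finite), both complexes belong to the $\F$-analogue of $D^{\rm p,c}(A[G],\Fi)$ (cf. Definition \ref{Dpc}) with trivial cohomology trivialisation $\lambda=0$.

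Finally, unwinding Definition \ref{ECex} in this setting (in which $\im(\alpha_\F)=Q_\F$ and $\im(\tau_{M,\F})=M'_\F$ because the cohomology vanishes after $\F$-ification, so the isomorphism $\gamma$ collapses to $\alpha_\F$ and $\tau_{M,\F}$ respectively) gives
\[\chi_G(C_1,0)\,=\,-[P,\alpha_\F,Q]\,\,\,\,\text{and}\,\,\,\,\chi_G(C_2,0)\,=\,-[M',\tau_{M,\F},M'].\]
By the independence of $\chi_G$ on the choice of representative of the quasi-isomorphism class (the remark following Definition \ref{ECex}), the two left-hand sides agree, which yields the claimed equality. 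The main obstacle is precisely this independence statement, which is asserted but not proved in detail in the paper; should a self-contained argument be required, I would instead pass through the mapping cone of the quasi-isomorphism $C_1\to C_2$, which is an acyclic complex of finitely generated projective $A[G]$-modules $P\to Q\oplus M'\to M'$ and hence contributes trivially to $K_0(A[G],\F[G])$ via additivity on distinguished triangles, or equivalently invoke Schanuel's lemma to produce an explicit identity in $K_0(A[G],\F[G])$ comparing the two triples.
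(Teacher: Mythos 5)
Your proof is correct and reaches the same conclusion, but via a genuinely different route from the paper. The paper short-circuits the whole question by invoking the Curtis--Reiner result \cite[Vol.~II, Rk.~(40.19), Lem.~(40.10)]{curtisr}, which identifies $K_0(A[G],\F[G])$ with the Grothendieck group of the category of finite $A[G]$-modules admitting a presentation of the form (\ref{categoryT}), the map sending $[M]\mapsto[P,\alpha_\F,Q]$. This isomorphism \emph{already} encodes the independence of $[P,\alpha_\F,Q]$ on the choice of presentation, so after one line the paper simply substitutes the presentation of Lemma~\ref{43.4} and is done. You instead construct a quasi-isomorphism $[P\to Q]\simeq[M'\xrightarrow{\tau_M}M']$ (your appeal to the projectivity of $P$ should read $Q$ — one first lifts $Q\to M$ through $M'\twoheadrightarrow M$ and then pins down $P\to M'$ using injectivity of $\tau_M$ — but the idea is right), observe that both complexes lie in the $\F$-analogue of $D^{\rm p,c}$ with vanishing $\F$-cohomology, and then read off both sides of the lemma as refined Euler characteristics $\chi_G(-,0)$. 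The pivotal step is the independence of $\chi_G$ on the choice of representative, which in the paper is only asserted in the remark following Definition~\ref{ECex} (attributed to \cite{ewt} or declared ``straightforward''); you correctly flag this as the crux and sketch the standard repairs (mapping cone, or Schanuel applied to the pullback $Q\times_M M'$, which yields $P\oplus M'\cong M'\oplus Q$ and from there the desired identity in $K_0(A[G],\F[G])$). Your route is conceptually pleasant in that it folds the lemma into the refined Euler characteristic formalism used throughout the paper, whereas the paper's is shorter and fully self-contained by reference; both are sound.
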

\begin{proof}
As observed in Remark \ref{rationalFit}, throughout this proof we may, and do, regard $\KFit_G(M)$ as an element of $K_0(A[G],\F[G])$. We also abuse notation and write $\partial_G$ for the canonical map $K_1(\F[G])\to K_0(A[G],\F[G])$.

Let $\mathcal{C}$ be the category of finite $A[G]$-modules that admit a projective presentation of the form (\ref{categoryT}). There is an isomorphism from the Grothendieck group $K_0(\mathcal{C})$ of $\mathcal{C}$ to $K_0(A[G],\F[G])$ that maps $[M]$ to $[P,\alpha_\F,Q]$ (cf. \cite[Vol. II, Rk. (40.19), Lem. (40.10)]{curtisr}). In particular, $[P,\alpha_\F,Q]$ depends only on the isomorphism class of $M$.

Since Lemma \ref{43.4}
gives a projective presentation of the form (\ref{categoryT}),
we find that
$$[P,\alpha_\F,Q]=[M',\tau_{M,\F},M']=\partial_G([M'_\F,\tau_{M,\F}])=\partial_G(c_G(M))=\KFit_G(M).$$
\end{proof}

\subsection{Computation through an auxiliary lattice}\label{71}
To deduce Theorem \ref{MT} from Corollary \ref{tfcor}, for a fixed taming module $\cm$ for $K/k$, we must compute $\left[1+\Phi_E\mid V_\infty^\cm\right]$ in $K_1(\Fq[G][\![Z]\!])$. Here $\Phi_E=(\varphi_{j,E})_{j\geq 1}$ with $\varphi_{j,E}=(t-\phi_E(t))t^{j-1}$ and the power series class is computed with respect to an arbitrary choice of sequence in $\cT_{S_k^\infty}^\cm$. 

We set $R:=\Fq[G]$, $\Lambda:=R[\![Z]\!]$, $V_E:=E(K_\infty)/E(\cm)$, $\mathbb{U}:=\expm$ and $H:=H(E/\cm)$. By abuse of notation, we denote by ${\rm exp}_E$ the map $$K_\infty/\UU\longrightarrow V_E$$ induced by the exponential of $E$.
In this section we prove the next intermediate result.

\begin{theorem}\label{POdiag} The following claims are valid.
\begin{itemize}\item[(i)]
There exists a free $A[G]$-lattice $M^1$ in $K_\infty$ that contains $\UU$, 
and an exact commutative diagram of $A[G]$-modules of the form
\begin{equation*}
\begin{CD}
@. 0 @. 0 @. 0 @.\\
@. @V  VV @V  VV @V  VV @.\\
0 @>  >> M^1/\UU @>  >>  K_\infty/\UU @>  >> K_\infty/M^1 @>  >> 0\\
@. @V  VV @VV {\rm exp}_E V   @\vert @.\\
0 @>  >> M^2 @>  >>  V_E @>  >> K_\infty/M^1 @>  >> 0\\
@. @V  VV @V  VV @V  VV @.\\
@. H @. H @. 0 @.\\
@. @V  VV @V  VV @. @.\\
@. 0 @. 0 @. @. @.
\end{CD}\end{equation*}
in which the first row and second column are canonical. 
\item[(ii)] For any $A[G]$-modules $M^1$, $M^2$ as in claim (i), in $K_0(A[G],\Fi[G])$ one has
$$\partial_G\bigl(\Theta^\cm_{E,K/k}\bigr)\,=\,\bigl[M^1,\lambda,\cm\bigr]\,+\,\KFit_G(M^2).$$
Here we identify $\lambda=\lambda^\cm_{E,K/k}$ with the isomorphism
$M^1_{\Fi}=\UU_{\Fi}\xrightarrow{\lambda}\cm_{\Fi}$, and $M^2$ is $G$-c.-t. by the exactness of the second row of the diagram.
\end{itemize}
\end{theorem}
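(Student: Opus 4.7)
Claim (i) will be established by an explicit construction. Since $\UU$ is an $A[G]$-lattice in $K_\infty = \Fi \otimes_\F K$ (by \cite{TMA}), and $K_\infty$ is free of rank $[k:\F]$ over $\FiG$, a suitable scaling of any $\FiG$-basis of $K_\infty$ produces a free $A[G]$-lattice containing $\UU$. To guarantee that the natural surjection $K_\infty/\UU \to K_\infty/M^1$ extends to an $A[G]$-equivariant map $V_E \to K_\infty/M^1$, I would further enlarge this initial lattice to absorb the images in $K_\infty$ of all $A[G]$-syzygies associated with a chosen finite $A[G]$-presentation of $H = H(E/\cm)$ by lifts inside $V_E$, including the $\phi_E(t)$-translates of these lifts modulo $K_\infty/\UU$. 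Under the compatibility of the natural $t$-action on $K_\infty/\UU$ with the $\phi_E(t)$-action on its image in $V_E$ via $\exp_E$, mapping each chosen lift of $H$ to zero yields a well-defined $A[G]$-equivariant extension $V_E \to K_\infty/M^1$. Defining $M^2$ as the kernel of this extension, the complete diagram follows from this construction together with a routine snake-lemma argument; finiteness and $G$-cohomological triviality of $M^2$ are then consequences of the first-column exact sequence $0 \to M^1/\UU \to M^2 \to H \to 0$ and Remark \ref{VGct}.

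For claim (ii), my plan is to combine Corollary \ref{tfcor} with the multiplicativity of power series classes in short exact sequences (Remark \ref{ses}). By Corollary \ref{tfcor}, $\Theta^\cm_{E,K/k} = \evt([1+\Phi_E \mid V_\infty^\cm])$, so it suffices to compute the image of this class under $\partial_G$. I would first prove the claimed equality for a specific $M^1$ that is not only a free $A[G]$-lattice containing $\UU$, but also $\co_k\{\tau\}[G]$-stable and containing $\cm$; such an $M^1$ can be constructed by further enlarging the initial lattice to absorb the $\tau$-orbits of a finite set of generators, and then verifying that the enlargement can be chosen to remain a free $A[G]$-module. For such an $M^1$, the filtration $\cm \subseteq M^1 \subseteq K_\infty$ induces a short exact sequence of $\co_k\{\tau\}[G]$-modules $0 \to M^1/\cm \to V_\infty^\cm \to K_\infty/M^1 \to 0$, and Remark \ref{ses} accordingly decomposes $[1+\Phi_E \mid V_\infty^\cm]$ in $K_1(\Lambda)$ into a finite-module factor associated with $M^1/\cm$ and an infinite factor associated with $K_\infty/M^1$. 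By identity (\ref{pwfact}) and the argument of the proof of Corollary \ref{tfcor}, the former evaluates to $c_G(M^1/\cm)^{-1} \cdot c_G(E(M^1/\cm))$ in $K_1(\FiG)$, and passing this to $K_0(A[G],\FiG)$ via $\partial_G$ and using the first-column sequence $0 \to M^1/\UU \to M^2 \to H \to 0$ then rewrites the result as $\KFit_G(M^2)$ up to a finite correction.

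The remaining step is to analyse the infinite factor $[1+\Phi_E \mid K_\infty/M^1]$ and show that, combined with the correction above, it contributes $[M^1, \lambda, \cm]$ in $K_0(A[G], \FiG)$. To do this I would apply Proposition \ref{varphialpha} with $\alpha = t$, using that multiplication by $t$ is surjective on the $A$-divisible module $K_\infty/M^1$, to re-express the class in a form compatible with evaluation at $Z = t^{-1}$, and then identify the result with the lattice-change class by direct comparison with the defining isomorphism of $\lambda$ on $K_\infty$. Finally, I would deduce the equality for arbitrary $M^1$ as in (i) via an additivity argument: two admissible choices $M^1 \subseteq \tilde{M}^1$ give right-hand sides that differ only by the $K$-theoretic Fitting class of $\tilde{M}^1/M^1$, which cancels with that of the corresponding finite quotient $\tilde{M}^2/M^2$ in view of the diagram. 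The principal obstacle is this identification of the infinite factor: since neither $V_\infty^\cm$ nor $K_\infty/M^1$ is finitely generated over $A[G]$, the separate expressions $(1 - Z\phi_E(t))$ and $(1 - Zt)^{-1}$ do not admit a naive evaluation at $Z = t^{-1}$, and the required cancellation of singularities must be controlled entirely within the topological Whitehead group $K_1(\FiG)$ via Proposition \ref{varphialpha} and the machinery of \S\ref{Wadic}.
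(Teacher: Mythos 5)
Your approach to claim (i) is broadly in the right spirit: the paper deduces it from an adaptation (Lemma~\ref{admissible}) of \cite[Prop.\ 4.2.3]{fghp}, fixing an $A$-module section $s:H\hookrightarrow V_E$ and enlarging an initial free $A[G]$-lattice so that $\iota(M^1/\UU)\oplus s(H)$ becomes a genuine $A[G]$-submodule of $V_E$. Your description in terms of ``syzygies'' and ``$\phi_E(t)$-translates'' is somewhat imprecise --- the obstruction to be absorbed is really the failure of $G$-equivariance of the $A$-module section $s$, captured by the elements $a_{g,h}$ in (\ref{Gaction}) --- but the underlying enlargement argument matches the paper's.

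For claim (ii) there is a genuine gap, and the route you sketch is also genuinely different from the paper's. Your plan hinges on constructing a free $A[G]$-lattice $M^1$ that contains $\UU$ and $\cm$ and is moreover \emph{$\co_k\{\tau\}[G]$-stable}, so that $\Phi_E$ preserves the filtration $\cm\subseteq M^1\subseteq K_\infty$ and Remark~\ref{ses} applies. No such lattice exists in general: $\tau$ acts by $x\mapsto x^q$, and since $\UU=\expm$ is typically not contained in $\co_K$, any free $A[G]$-lattice $M^1\supseteq\UU$ contains elements with denominators at infinity, whose $\tau$-orbits $\{x,x^q,x^{q^2},\dots\}$ are unbounded in $K_\infty$ and hence cannot lie in any finitely generated $A$-submodule. ``Absorbing the $\tau$-orbits of a finite set of generators'' therefore cannot produce an $A[G]$-lattice. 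This is precisely why the paper does not decompose $V_\infty^\cm$ using a $\tau$-stable superlattice of $\UU$: instead it fixes matrices $\sigma_N\in \mathrm{Id}_n+t^{-2N}\Mat_n(R[[t^{-1}]])$ differing from the transition matrix $X$ by an element of $\GL_n(\F[G])$, works at each finite level $N$ with push-out approximations $M^1_N$, $M^2_N$ that \emph{are} commensurable with $\cm$ (Lemma~\ref{commonOL}, Lemma~\ref{itsapowerseries}), and assembles the result through the topological isomorphism of Proposition~\ref{151}, the limit description $K_0(R,\Lambda)\cong\varprojlim_N K_0(R,\Lambda/Z^N)$ of Lemma~\ref{limKN}, and --- crucially --- the injectivity of $\evt^0$ proved in Proposition~\ref{Braunling}. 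None of these ingredients appear in your outline.

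Even setting aside the existence of a $\tau$-stable $M^1$, the key step in your outline --- identifying the ``infinite factor'' $\left[1+\Phi_E\mid K_\infty/M^1\right]$ with the lattice-comparison class $[M^1,\lambda,\cm]$ --- is asserted but not carried out. You flag it yourself as ``the principal obstacle'' and correctly note that neither $V_\infty^\cm$ nor $K_\infty/M^1$ is finitely generated, so neither $(1-Z\phi_E(t))$ nor $(1-Zt)^{-1}$ admits a naive evaluation at $Z=t^{-1}$; but the proposal then invokes ``the machinery of \S\ref{Wadic}'' without specifying how the cancellation of singularities is actually controlled. In the paper this is exactly what Lemma~\ref{commonOL} (via \cite[Prop.\ 5.1.3]{fghp}, the Cancellation Theorem, and repeated applications of Proposition~\ref{varphialpha}) and Proposition~\ref{Braunling} achieve; without them, the identification of the infinite contribution remains unproved.
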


\begin{remark}{\em In fact, claim (ii) of Theorem \ref{POdiag} remains valid for any projective $A[G]$-lattice $M^1$ as in claim (i) (even if it is not free) and associated $A[G]$-module $M^2$.
}\end{remark}

To prove Theorem \ref{POdiag}, we first note that the $A$-module $K_\infty/\mathbb{U}$ is divisible, and we fix an $A$-module section $s:H\hookrightarrow V_E$ to the tautological surjection.

The construction of $A[G]$-modules $M^1$, $M^2$ as in claim (i) of Theorem \ref{POdiag} then follows directly from the following adaptation of the result \cite[Prop. 4.2.3]{fghp}, applied to the exact sequence $$0\to K_\infty/\UU\xrightarrow{{\rm exp}_E}V_E\longrightarrow H\to 0.$$

\begin{lemma}\label{admissible} Fix an exact sequence of topological $A[G]$-modules of the form
$$0\to K_\infty/L\stackrel{\iota}{\to}V\to\cH\to 0$$
where $L$ is an $A[G]$-lattice in $K_\infty$, $V$ is compact and $R$-projective and $\cH$ is finite. Fix an $A$-module section $s:\cH\hookrightarrow V$ to this sequence. Then the following claims are valid:
\begin{itemize}\item[(i)]
There exists a free $A[G]$-lattice $M^1$ in $K_\infty$ that contains $L$ and with the property that
$M^2:=\iota(M^1/L)\oplus s(\cH)$ is an $A[G]$-submodule of $V$.
\item[(ii)] For $M^1$ as in claim (i), there is an exact commutative diagram of $A[G]$-modules
\begin{equation*}
\begin{CD}
@. 0 @. 0 @. 0 @.\\
@. @V  VV @V  VV @V  VV @.\\
0 @>  >> M^1/L @>  >>  K_\infty/L @>  >> K_\infty/M^1 @>  >> 0\\
@. @V  VV @VV \iota V   @\vert @.\\
0 @>  >> M^2 @>  >>  V @>  >> K_\infty/M^1 @>  >> 0\\
@. @V  VV @VV   V @V  VV @.\\
@. \cH @. \cH @. 0 @.\\
@. @V  VV @V  VV @. @.\\
@. 0 @. 0. @. @. @. 
\end{CD}\end{equation*}
\item[(iii)] Set $L_0:=L$ and let $\{L_i\}_{1\leq i\leq m}$ be a finite set of $A[G]$-lattices in $K_\infty$, with the property that the union $\bigcup_{i=0}^{i=m}L_i$ is contained in a common $A[G]$-lattice in $K_\infty$. Then, in claim (i), $M^1$ may be chosen so that $\bigcup_{i=0}^{i=m}L_i\subseteq M^1$.
\end{itemize}
\end{lemma}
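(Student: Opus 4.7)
The heart of the lemma is claim (i); claims (ii) and (iii) then follow by a diagram chase and a minor enlargement of the same construction. The crux of the difficulty is that the section $s$ is only $A$-linear, not $G$-equivariant, so the naive candidate $\iota(M^1/L)+s(\cH)$ need not be a $G$-submodule of $V$. I will circumvent this by absorbing the failure of $G$-equivariance of $s$ into the choice of $M^1$. To quantify this failure, since $\iota(K_\infty/L)=\ker(V\to\cH)$, for every pair $(g,h)\in G\times\cH$ the element $g\cdot s(h)-s(g\cdot h)$ lies in $\iota(K_\infty/L)$, so by the injectivity of $\iota$ it equals $\iota(y_{g,h})$ for a unique $y_{g,h}\in K_\infty/L$. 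I will fix a lift $\tilde y_{g,h}\in K_\infty$ of each $y_{g,h}$; since $G$ and $\cH$ are both finite, only finitely many such lifts are required.

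To build $M^1$, I use that $K_\infty$ is free of rank $n:=[k:\F]$ over $\Fi[G]$ and fix an $\Fi[G]$-basis $\{e_1,\ldots,e_n\}$. For any finite set $T\subset K_\infty$, there exists $N\geq 0$ such that the coordinates of the elements of $T$ in this basis lie in $t^{-N}A[G]$; the free $A[G]$-module generated by $\{t^{-N}e_i\}_{i=1}^n$ then contains $T$, and since $\{t^{-N}e_i\}$ remains an $\Fi[G]$-basis of $K_\infty$ it is also an $A[G]$-lattice. I apply this with $T$ containing (a) a finite $A[G]$-generating set of $L$, (b) the finite family $\{\tilde y_{g,h}\}_{(g,h)\in G\times\cH}$, and (c) for claim (iii), finite $A[G]$-generating sets of each $L_i$ (available since the $L_i$ all lie in a common lattice in the intended applications). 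The resulting free $A[G]$-lattice is $M^1$.

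For the verification, note that the section $s$ exhibits $V=\iota(K_\infty/L)\oplus s(\cH)$ as an internal direct sum of $A$-modules (since $\pi\circ s=\id$ and $\pi\circ\iota=0$ for $\pi\colon V\twoheadrightarrow\cH$), whence $M^2:=\iota(M^1/L)\oplus s(\cH)$ is automatically a direct sum of $A$-modules. For $G$-stability, $\iota(M^1/L)$ is $G$-stable since $L$ and $M^1$ are, while $g\cdot s(h)=s(g\cdot h)+\iota(y_{g,h})\in s(\cH)+\iota(M^1/L)=M^2$ by the construction of $M^1$. For claim (ii), the top row of the diagram is canonical and the middle column is the hypothesis; I construct the bottom row via the map $\psi\colon V\to K_\infty/M^1$ defined by $\iota(x)+s(h)\mapsto x+M^1/L$, which is $A$-linear, and the same non-equivariance identity shows it is $G$-equivariant precisely because each $y_{g,h}$ lies in $M^1/L$. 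Its kernel is visibly $M^2$, yielding the bottom row, and commutativity of the remaining squares is immediate. Claim (iii) is built into item (c) above.

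The main obstacle is assembling all three constraints on $M^1$ into a single object: freeness as an $A[G]$-module, status as an $A[G]$-lattice in $K_\infty$, and containment of the non-equivariance errors $\tilde y_{g,h}$ together with generators of $L$ (and of each $L_i$, for claim (iii)). The freeness of $K_\infty$ over $\Fi[G]$ is precisely what allows one to rescale a global $\Fi[G]$-basis by a uniform power of $t^{-1}$ to obtain a free $A[G]$-lattice of arbitrarily large ``radius'', and the finiteness of $G$ and $\cH$ is what keeps the list of constraints finite.
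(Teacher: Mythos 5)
Your overall strategy is the right one: you correctly identify that the only obstacle to $\iota(M^1/L)\oplus s(\cH)$ being a $G$-submodule of $V$ is the failure of $G$-equivariance of the $A$-linear section $s$, you quantify this failure through the elements $y_{g,h}\in K_\infty/L$ satisfying $g\cdot s(h)-s(gh)=\iota(y_{g,h})$, and you absorb it by requiring $y_{g,h}\in M^1/L$. The verification of $G$-stability of $M^2$, and the direct construction in claim (ii) of the $G$-equivariant map $\psi\colon V\to K_\infty/M^1$ with kernel $M^2$, are both correct; the latter is even slightly more streamlined than the snake-lemma argument the paper uses.

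However, the construction of $M^1$ contains a genuine gap. Having fixed an $\Fi[G]$-basis $\{e_1,\ldots,e_n\}$ of $K_\infty$, you assert that for any finite $T\subset K_\infty$ there exists $N\ge 0$ such that the coordinates of the elements of $T$ in this basis lie in $t^{-N}A[G]$. This is false: the coordinates lie in $\Fi[G]$, whereas $\bigcup_{N\ge 0}t^{-N}A[G]$ is the Laurent polynomial ring $\Fq[t,t^{-1}][G]$, a proper subset of $\Fi[G]$ (and even of $\F[G]$). A Laurent series such as $\sum_{k\ge 0}t^{-k^2}\in\Fi$ lies in $t^{-N}A$ for no $N$, and there is no reason that the $A[G]$-generators of $L$ or of the $L_i$, or the lifts $\tilde y_{g,h}$, should have Laurent-polynomial coordinates with respect to a basis $\{e_i\}$ chosen independently of them. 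Consequently the free lattice $\sum_i A[G]\cdot t^{-N}e_i$ need not contain the required data for any $N$.

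The correct route --- and essentially what the reference \cite[Prop.\ 4.2.3, Rk.\ 4.2.4]{fghp} invoked by the paper does --- is to choose the basis after the data, not before. Form the $A[G]$-lattice $N:=L_0+L+\sum_i L_i+\sum_{g,h}A[G]\tilde y_{g,h}$, where $L_0$ is any initial free $A[G]$-lattice. Since $N_\F\otimes_\F\Fi\cong K_\infty\cong\Fi[G]^n$, the Noether--Deuring Theorem (already cited in Remark~\ref{TMremark}(ii)) gives $N_\F\cong\F[G]^n$. Choosing an $\F[G]$-basis $\{f_1,\ldots,f_n\}$ of $N_\F$ inside $K_\infty$ and expressing a finite $A[G]$-generating set of $N$ in terms of the $f_i$, one finds a common denominator $a\in A\setminus\{0\}$ --- in general an arbitrary polynomial, not a power of $t$ --- such that $N\subseteq M^1:=\sum_i A[G]\cdot a^{-1}f_i$. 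This $M^1$ is a free $A[G]$-lattice containing $L$, all the $L_i$, and all the $\tilde y_{g,h}$, and with this repair the rest of your argument goes through as written.
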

\begin{proof}
Claims (i) and (iii) follow directly from the approach of \cite[Prop. 4.2.3, Rk. 4.2.4]{fghp}. For later use, we note that the $G$-action on $V=\iota(K_\infty/L)\oplus s(\cH)$ is
\begin{equation}\label{Gaction}g\cdot(\iota(\lambda),s(h))=(\iota(g\lambda)+a_{g,h},b_{g,h})=(\iota(g\lambda)+a_{g,h},s(gh))\end{equation} for any $\lambda\in K_\infty/L$, $h\in\cH$ and $g\in G$, where
$(a_{g,h},b_{g,h}):=g\cdot(0,s(h))$ and we have also used the fact that $b_{g,h}$ is necessarily equal to $s(gh)$.

Now, from the explicit definition of $M^2$, we derive an exact commutative diagram of $A[G]$-modules
\begin{equation*}
\begin{CD}
@. 0 @. 0 @. @. @.\\
@. @V  VV @V  VV @. @.\\
0 @>  >> M^1/L @>  >>  K_\infty/L @>  >> K_\infty/M^1 @>  >> 0\\
@. @V  VV @VV \iota V   @V  VV @.\\
0 @>  >> M^2 @>  >>  V @>  >> V/M^2 @>  >> 0\\
@. @V  VV @VV V @. @.\\
@. \cH @. \cH @. @. @.\\
@. @V  VV @V  VV @. @.\\
@. 0 @. 0. @. @. @.
\end{CD}\end{equation*}
Since the first two columns of this diagram induce the identity map on $\cH$, the arrow in the third column must be bijective by the snake lemma, and it is therefore straightforward to modify the second row to obtain the diagram in claim (ii).
\end{proof}

\begin{remark}\label{forearlierref}{\em If $K/k$ is abelian, then the main result \cite[Thm. 1.5.1]{fghp} of Ferrara, Green, Higgins and Popescu follows directly from Theorem \ref{POdiag}.

Indeed, as in Remark \ref{impartial} and since $G$ is abelian, taking determinants over $\FiG$ induces an isomorphism
$$\im(\partial_G)\cong K_1(\FiG)/\im(K_1(A[G]))\cong\FiG^\times/A[G]^\times\cong\FiG^+,$$
with the last map as in \cite[Def. A.3.2, Prop. A.3.4, Cor. A.3.6]{fghp}. Also, by Prop. A.4.1 in loc. cit. (or by Lemma \ref{Fitchar} in this special abelian case), the determinant of the characteristic class $c_G(M)$ of any suitable module $M$ is equal to its `$A[G]$-size' $|M|_G$, as in Def. 1.2.4 or Def. A.4.2 of loc. cit.
Therefore, using also that taking determinants is continuous (as a special case of Lemma \ref{ctsNrd}), $\partial_G\bigl(\Theta^\cm_{E,K/k}\bigr)$ maps to
\begin{align*}&{\prod}_{\p\in\Spec(\co_k)}\det_{\FiG}\bigl(c_G\bigl(\cm/\p\cm\bigr)\cdot c_G\bigl(E(\cm/\p\cm)\bigr)^{-1}\bigr)\\ =&{\prod}_{\p\in\Spec(\co_k)}|\cm/\p\cm|_G\cdot |E(\cm/\p\cm)|_G^{-1},\end{align*}
which recovers the definition of $\Theta_{K/k}^{E,\cm}(0)\in\FiG^+$ on p. 2221 of loc. cit.

It thus suffices to observe that taking determinants maps the sum $[M^1,\lambda,\cm]+\KFit_G(M^2)$ to the `ratio of volumes' ${\rm Vol}(V_E)/{\rm Vol}(K_\infty/\cm)$ of \cite[Thm. 1.5.1]{fghp}.
On the one hand, for any free $A[G]$-lattice $\cN$ in $K_\infty$ that contains $\cm$, the former sum is the image under $\partial_G$ of the product
$$c_G(M^2)\cdot[X]\cdot c_G(\cN/\cm)^{-1},$$ where $X\in{\rm GL}_n(\FiG)$ is the transition matrix between arbitrary $A[G]$-bases of $\cN$ and of $M^1$ respectively. 
The claimed observation is now clear because, on the other hand, taking $\Lambda_0$ to be $\cm$ in Def. 4.2.6 of loc. cit., ${\rm Vol}(K_\infty/\cm)=1$ while ${\rm Vol}(V_E)$ is computed (taking $\Lambda'$ to be $M^1$, so that $\Lambda'/\Lambda\times s(H)=M^2$) as
$$|M^2|_G\cdot[M^1:\cm]_G^{-1}=|M^2|_G\cdot\det(X^{-1})^{-1}\cdot|\cN/\cm|_G^{-1}=|M^2|_G\cdot\det(X)\cdot|\cN/\cm|_G^{-1},$$
with the factor $[M^1:\cm]_G$ and the first equality given by Def. 4.1.4 in loc. cit.
}\end{remark}

In the rest of \S \ref{71} we prove Theorem \ref{POdiag} (ii).
We fix $A[G]$-modules $M^1$ and
$$M^2:={\rm exp}_E(M^1/\mathbb{U})\oplus s(H)$$
as in claim (i), and proceed to compute the class $\left[1+\Phi_E\mid V_\infty^\cm\right]$.
To do so we first construct, for each natural number $N$, approximations $M^1_N$ and $M^2_N$ of $M^1$ and $M^2$ that are related to the taming module $\cm$.

We set $n:=[k:\F]$, recall that $K_\infty\cong\FiG^n$ and fix an $\FiG$-basis $b_\bullet$ of $K_\infty$. We use this basis to interpret any $\sigma\in{\rm GL}_n(\FiG)$ as an automorphism of $K_\infty$. 

We fix a free $A[G]$-lattice $\cN$ in $K_\infty$ containing $\cm$ and $A[G]$-bases ${\bf e}$ and ${\bf e}'$ of $\cN$ and of $M^1$ and write $X\in{\rm GL}_n(\FiG)$ for the transition matrix between ${\bf e}'$ and ${\bf e}$. 

Endowing $M_n(\FiG)=M_n(R(\!(t^{-1})\!))$ with its $t^{-1}$-adic topology, $\GL_n(\FiG)$ is an open subset and $(\{{\rm Id}_n+t^{-N}\Mat_n(R[\![t^{-1}]\!])\})_{N\geq 0}$ is a basis of open neighbourhoods of ${\rm Id}_n$ in $\GL_n(\FiG)$. It is also straightforward to see that $M_n(\F[G])=M_n(R(t))$ is dense in $M_n(\FiG)$ and that $\GL_n(\F[G])=M_n(\F[G])\cap\GL_n(\FiG)$. These facts imply that $\GL_n(\F[G])$ is dense in $\GL_n(\Fi[G])$ and, in particular, that the intersection
$$\GL_n(\F[G])\,\cap\,\{\left({\rm Id}_n+t^{-N}\Mat_n(R[\![t^{-1}]\!])\right)\cdot X^{-1}\}$$
is non-empty for every natural number $N$.
We thus may and do fix a matrix $$\sigma_N \in \{{\rm Id}_n+t^{-2N}\Mat_n(R[\![t^{-1}]\!])\}$$ such that $X\cdot\sigma_N^{-1}$ belongs to $\GL_n(\F[G])$. This property then implies that the free $A[G]$-lattices $\sigma_N(M^1)$ and $\cN$ are contained in a common $A[G]$-lattice in $K_\infty$.

For each $N$ we also fix a push-out $(\pi_N:V_E\to\Pi_N,\iota_N:K_\infty/\sigma_N(\mathbb{U})\to\Pi_N)$ of $(\exp_E:K_\infty/\UU\to V_E,\sigma_N:K_\infty/\UU\to K_\infty/\sigma_N(\mathbb{U}))$. Then $\pi_N$ is bijective and $\iota_N$ is injective (because $\sigma_N$ is bijective and $\exp_E$ is injective), so we derive an exact commutative diagram
\begin{equation}\label{NPOut}
\begin{CD}
0 @>  >> K_\infty/\mathbb{U} @> {\rm exp}_E >> V_E @>  >> H @>  >> 0\\
@.  @V \sigma_N  VV @VV \pi_N V @\vert @.\\
0 @>  >> K_\infty/\sigma_N(\mathbb{U})   @> \iota_N >> \Pi_N @>  >> H @>  >> 0 \end{CD}\end{equation}
with bijective vertical arrows. We write $s_N$ for the composition
$$H\stackrel{s}{\hookrightarrow}V_E\stackrel{\pi_N}{\cong}\Pi_N,$$
which is then an $A$-module section to the lower row of (\ref{NPOut}).

Applying now Lemma \ref{admissible} to $s_N$, to the lower row of the diagram (\ref{NPOut}) and to the finite set of free $A[G]$-lattices $\{\sigma_N(M^1),\cN\}$ we deduce, for each natural number $N$, the existence of a free $A[G]$-lattice $M^1_N$, with
$$\sigma_N(\UU)\cup\cm\subseteq\sigma_N(M^1)\cup\cN\subseteq M^1_N$$
and with the property that
$$M^2_N:=\iota_N(M^1_N/\sigma_N(\UU))\oplus s_N(H)$$
is an $A[G]$-submodule of $\Pi_N$ that fits in an exact commutative diagram 
\begin{equation}\label{ýetanotherdiagram}
\begin{CD}
@. 0 @. 0 @. 0 @.\\
@. @V  VV @V  VV @V  VV @.\\
0 @>  >> M^1_N/\sigma_N(\mathbb{U}) @>  >>  K_\infty/\sigma_N(\mathbb{U}) @>  >> K_\infty/M^1_N @>  >> 0\\
@. @V  VV @VV \iota_N  V   @\vert @.\\
0 @>  >> M^2_N @>  >>  \Pi_N @>  >> K_\infty/M^1_N @>  >> 0\\
@. @V  VV @V  VV @V  VV @.\\
@. H @. H @. 0 @.\\
@. @V  VV @V  VV @. @.\\
@. 0 @. 0. @. @. @. 
\end{CD}\end{equation}

Regarding this data as fixed, we now obtain the following intermediate computation of the class $\left[1+\Phi_E\mid V_\infty^\cm\right]$.

\begin{lemma}\label{commonOL}For every large enough natural number $N$, in $K_1(\Lambda/Z^N)$ one has
$$\left[1+\Phi_E\mid V_\infty^\cm\right]_N\,=\,\left[(M^1_N/\cm)_N,\,1-(Z\otimes t)\right]^{-1}\cdot\left[(M^2_N)_N,\,1-(Z\otimes t)\right].$$
\end{lemma}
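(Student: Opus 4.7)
The plan is to apply the factorisation identity (\ref{pwfact}) to rewrite the left-hand side as a ratio of two Whitehead classes, namely those of the endomorphisms $\id-(Z\otimes\phi_E(t))$ (the Drinfeld-twisted $t$-action) and $\id-(Z\otimes t)$ (the scalar $A$-action), and then to identify each of these, up to a common factor coming from the quotient $K_\infty/M^1_N$, with the corresponding term on the right-hand side, via additivity of Whitehead classes along short exact sequences.

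In detail, after fixing a common nucleus $U$ of $\varphi_{1,E},\ldots,\varphi_{N-1,E}$ in a suitable sequence of $\cT^\cm_{S^\infty_k}$, identity (\ref{pwfact}) implies that $(1+\Phi_E)_N$, regarded as an automorphism of $(V_\infty^\cm/U)_N$, equals $(\id-(Z\otimes\phi_E(t)))\circ(\id-(Z\otimes t))^{-1}$, whence in $K_1(\Lambda/Z^N)$ one has
$$[1+\Phi_E\mid V_\infty^\cm]_N=\bigl[(V_\infty^\cm/U)_N,\,\id-(Z\otimes\phi_E(t))\bigr]\cdot\bigl[(V_\infty^\cm/U)_N,\,\id-(Z\otimes t)\bigr]^{-1}.$$
Using the identification of underlying $R$-modules $E(K_\infty)/E(\cm)=K_\infty/\cm$, which intertwines the intrinsic $A$-action on $V_E$ with the Drinfeld action $\phi_E(t)$ on $V_\infty^\cm$, together with the $A[G]$-isomorphism $\pi_N\colon V_E\xrightarrow{\sim}\Pi_N$, the first factor above is identified with $\bigl[(\Pi_N/\pi_N(U))_N,\,\id-(Z\otimes t)\bigr]$, where $t$ now denotes the push-out $A$-action on $\Pi_N$.

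Next, I will apply additivity (Remark \ref{ses}) along the two short exact sequences of $A[G]$-modules
$$0\to M^1_N/\cm\to V_\infty^\cm\to K_\infty/M^1_N\to 0\quad\text{and}\quad 0\to M^2_N\to\Pi_N\to K_\infty/M^1_N\to 0,$$
the latter being the middle row of (\ref{ýetanotherdiagram}). A crucial point, built into the construction of $M^1_N$ and $M^2_N$ via (the exact analogue of) Lemma \ref{admissible}, is that the $t$-action on $K_\infty/M^1_N$ induced as the cokernel from $\Pi_N$ agrees with the scalar $A$-action coming from $K_\infty$; both sequences are therefore compatible with the scalar $t$-action on the common cokernel. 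Applying additivity to each and substituting into the previous expression for $[1+\Phi_E\mid V_\infty^\cm]_N$, the common factor associated to $K_\infty/M^1_N$ cancels, and one obtains the claimed equality.

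The main obstacle is that neither $t$ nor $\phi_E(t)$ is locally contracting on $V_\infty^\cm$, so the intermediate classes above are not literal instances of power series classes in the sense of Definition \ref{limitclass}. They are, however, well-defined elements of $K_1(\Lambda/Z^N)$, thanks to the automatic invertibility of $\id-(Z\otimes\psi)$ modulo $Z^N$, and one checks (in the spirit of Proposition \ref{Uprop}(iii) and Lemma \ref{autom}) that they do not depend on the precise choice of $U$, provided $N$ is sufficiently large; this is the origin of the hypothesis that $N$ be large enough in the lemma. The bookkeeping needed to ensure that $U$ is compatible with the finite $R$-submodules of $V_\infty^\cm$ corresponding to $M^1_N/\cm$ and to $M^2_N$ under the above identifications will constitute the technical heart of the argument.
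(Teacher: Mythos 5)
Your opening moves agree with the paper: factorising via~(\ref{pwfact}) and transporting the $\phi_E(t)$-factor to the $A$-action on $\Pi_N$ via $\pi_N$ is exactly how the proof begins. After that the two arguments diverge, and the step you leave as ``bookkeeping'' is in fact a genuine gap, not a technicality.

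Your plan is to split each of the two intermediate classes $[1-(Z\otimes\phi_E(t))\mid V_\infty^\cm]_N$ and $[1-(Z\otimes t)\mid V_\infty^\cm]_N$ along the sequences $0\to M^2_N\to\Pi_N\to K_\infty/M^1_N\to 0$ and $0\to M^1_N/\cm\to V_\infty^\cm\to K_\infty/M^1_N\to 0$, and then cancel the two quotient terms. For this cancellation to hold you need the quotient classes to be \emph{equal} in $K_1(\Lambda/Z^N)$, and since $t$ is not locally contracting on $K_\infty/M^1_N$ (scalar multiplication by $t$ \emph{raises} the $\{t^{-i}W\}$-filtration, so Lemma~\ref{autom} and Proposition~\ref{Uprop}(iii) do not apply), the class $[((K_\infty/M^1_N)/\bar U)_N,\,1-(Z\otimes t)]$ genuinely depends on $\bar U$: for $\bar U'\subsetneq\bar U$ the map $1-(Z\otimes t)$ does not even preserve the subquotient $(\bar U/\bar U')_N$, so the two classes are not related by a short exact sequence. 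In your two splittings the quotient nuclei are the images of $U$ and of $\pi_N(U)$ respectively, and $\pi_N$ is not the identity map, so these nuclei have no reason to coincide; your claim that the non-nuclear intermediate classes ``do not depend on the precise choice of $U$'' is the point where the argument breaks.

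The paper handles this by never isolating the two non-nuclear factors. It transports everything through the $R$-splitting $\tilde\pi:V_\infty^\cm\xrightarrow{\sim}M^2_N\oplus Q$, uses the Cancellation Theorem to produce an $R$-isomorphism $\beta:M^1_N/\cm\xrightarrow{\sim}M^2_N$ and hence a ``diagonal'' comparison isomorphism $\delta=\beta\oplus\mathrm{id}_Q$, and then works with the \emph{ratio}: $\frac{1-(Z\otimes\varpi t\varpi^{-1})}{1-(Z\otimes\delta t\delta^{-1})}$ is an honest nuclear automorphism that respects the decomposition $M^2_N\oplus Q$ and is the identity on the $Q$-part, so it splits without any nucleus ambiguity and gives exactly the right-hand side of the lemma. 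What remains is the residual factor $\frac{1-(Z\otimes\delta t\delta^{-1})}{1-(Z\otimes\tilde\pi t\tilde\pi^{-1})}$, and showing that this is trivial is the technical heart of the proof: it requires the estimate from \cite[Prop.\ 5.1.3]{fghp} (that $\tilde\pi(t^{-j}B_{K_\infty})=t^{-j}B_{K_\infty}$ and $(\tilde\pi^{-1}-1)(t^{-j}B_{K_\infty})\subseteq t^{-(j+2N)}B_{K_\infty}$ for large $j$, which is where the careful choice of $\sigma_N$ pays off) together with the Anderson--Taelman commutation trick, packaged here as Proposition~\ref{varphialpha}. None of this machinery appears in your proposal, so the argument as written does not close.
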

\begin{proof}
We first recall that any quotient of projective $R$-modules is also $R$-projective (see Remark \ref{VGct}). 
We fix an open, projective $R$-submodule $Q$ of $K_\infty$ as in Lemma \ref{openfinite} below. In particular, $K_\infty=M_N^1\oplus Q$.
We also fix, as we may, an $R$-splitting $\varpi:\Pi_N\cong M_N^2\oplus (K_\infty/M^1_N)$ of the second row of the diagram (\ref{ýetanotherdiagram}).

Then $\pi_N:V_E\cong\Pi_N$ induces an isomorphism of $R$-modules
$$\tilde\pi:(M^1_N/\cm)\oplus Q= V_\infty^\cm= V_E\cong\Pi_N\cong M_N^2\oplus (K_\infty/M^1_N)= M_N^2\oplus Q.$$

We now write $B_{K_\infty}$ for the closed unit ball in $K_\infty$. Then our choice of $\sigma_N$ combines with \cite[Prop. 5.1.3]{fghp} to imply that for every large enough natural number $j$, one has 
\begin{equation}\label{513}\tilde\pi(t^{-j}B_{K_\infty})=t^{-j}B_{K_\infty}\,\,\,\,\,\,\text{ and }\,\,\,\,\,\,(\tilde\pi^{-1}-1)(t^{-j}B_{K_\infty})\subseteq t^{-(j+2N)}B_{K_\infty}.\end{equation}

In particular, if $j$ is also large enough that $t^{-j}B_{K_\infty}\subseteq Q$, the $R$-isomorphism $\tilde\pi$ induces an $R$-isomorphism
$$(M^1_N/\cm)\oplus (Q/t^{-j}B_{K_\infty})\cong M_N^2\oplus (Q/t^{-j}B_{K_\infty}).$$
Thus, by the Cancellation Theorem \cite[20.12]{lam} (and Lemma \ref{openfinite}), there exists a (non-canonical) $R$-isomorphism
$\beta:M_N^1/\cm\cong M_N^2$.
We then consider the $R$-isomorphism
$$\delta:=\beta\oplus{\rm id}_Q:(M^1_N/\cm)\oplus Q\cong  M_N^2\oplus Q.$$

We next recall that the power series class $\left[1+\Phi_E\mid V_\infty^\cm\right]$ is computed with respect to the sequence $\{t^{-i}W\}_{i\geq 1}$ in $\cT_{S_k^\infty}^\cm$, for some $R$-projective, open, torsion-free-$R[t^{-1}]$-submodule $W$ of $V_\infty^\cm$ of the form (\ref{varpi}), which we now regard as fixed.

We first use the factorisation (\ref{pwfact}) to compute that
\begin{equation*}\label{firstcomput'}\left[1+\Phi_E\mid V_\infty^\cm\right]_N=\left[\frac{1-(Z\otimes\varpi t\varpi^{-1})}{1-(Z\otimes\tilde\pi t\tilde\pi^{-1})}\mid M_N^2\oplus Q\right]_N,\end{equation*}
with the right-hand side computed with respect to a common nucleus in $\{\tilde\pi(t^{-i}W)\}_{i\geq 1}$.

On the other hand, one has
\begin{align*}\left[\frac{1-(Z\otimes\varpi t\varpi^{-1})}{1-(Z\otimes\delta t\delta^{-1})}\mid M_N^2\oplus Q\right]_N&=\left[\frac{1-(Z\otimes t)}{1-(Z\otimes\beta t\beta^{-1})}\mid M_N^2\right]_N\cdot\left[\frac{1-(Z\otimes t)}{1-(Z\otimes t)}\mid Q\right]_N\\
&=\left[\frac{1-(Z\otimes t)}{1-(Z\otimes\beta t\beta^{-1})}\mid M_N^2\right]_N \\
=&\left[(M^1_N/\cm)_N,1-(Z\otimes t)\right]^{-1}\cdot\left[(M^2_N)_N,1-(Z\otimes t)\right].
\end{align*}

It therefore only remains to prove that the element
$$\left[\frac{1-(Z\otimes\delta t\delta^{-1})}{1-(Z\otimes\tilde\pi t\tilde\pi^{-1})}\mid M_N^2\oplus Q\right]_N=\left[\frac{1-(Z\otimes\tilde\pi^{-1}\delta t\delta^{-1}\tilde\pi)}{1-(Z\otimes t)}\mid V_\infty^\cm\right]_N$$
of $K_1(\Lambda/Z^N)$ is trivial or equivalently, in the notation of Definition \ref{charPSC} ($m=1$), that
$$\left[1-\left(\tilde\pi^{-1}\delta t\delta^{-1}\tilde\pi\right)_1\mid V_\infty^\cm\right]_N=\left[1-t_1\mid V_\infty^\cm\right]_N.$$

We set $\psi:=(\tilde\pi^{-1}\delta)-1$ and $\alpha:=t(\delta^{-1}\tilde\pi)$. Then,
by Taelman's argument in \cite[Cor. 1]{TAM}, it suffices to show that
$$\left[1-(\varphi\alpha)_1\right]_N\,=\,\left[1-(\alpha\varphi)_1\right]_N,$$
where $\varphi$ is a composition of, say, $n<N$ endomorphisms in the set $\{\alpha,\psi\}$, with $\psi$ occurring at least once.
In order to do so, we proceed to show that such an $\alpha$ and $\varphi$ satisfy the hypotheses of Proposition \ref{varphialpha}.

The map $\alpha$ is surjective, since both $\delta$ and $\tilde\pi$ are bijective while $V_\infty^\cm$ is $A$-divisible.

To consider hypothesis (b) of Proposition \ref{varphialpha}, we use the integer $a_W$ in Proposition \ref{Uprop} (i), which we recall is independent of $N$. We thus may and do assume that $N \geq 2 a_W$, and we also fix a natural number $i$ large enough that for every $j\geq i-N$, (\ref{513}) is valid and also $t^{-j}B_{K_\infty}\subseteq Q$, so $\delta$ restricts to the identity map on $t^{-j}B_{K_\infty}$.

In this case, one has
\begin{multline*}\varphi\alpha(t^{-i}W)\subseteq\varphi\alpha(t^{-i}B_{K_\infty})=\varphi(t^{-(i-1)}B_{K_\infty})\subseteq t^{-(i-n+2N)}B_{K_\infty}\subseteq t^{-(i+a_W+1)}B_{K_\infty}\\ \subseteq t^{-(i+1)}W\end{multline*}
and, similarly,
$$
\alpha\varphi(t^{-i}W)\subseteq \alpha\varphi(t^{-i}B_{K_\infty})\subseteq t^{-(i-n+2N)}B_{K_\infty}\subseteq t^{-(i+a_W+1)}B_{K_\infty}\\ \subseteq t^{-(i+1)}W.
$$
Thus both $\varphi\alpha$ and $\alpha\varphi$ are locally contracting, with common nucleus $t^{-i}W$. In fact, the same arguments as above also show that
$$\alpha(t^{-(i+a_W+1)}W)\subseteq\alpha(t^{-(i+a_W+1)}B_{K_\infty})\subseteq t^{-(i+a_W)}B_{K_\infty}\subseteq t^{-i}W$$
and that
$$\varphi(t^{-i}W)\subseteq\varphi(t^{-i}B_{K_\infty})\subseteq t^{-(i+2N-n+1)}B_{K_\infty} \subseteq t^{-(i+N-a_W+2)}W  \subseteq t^{-(i+a_W+1)}W.$$
Hypothesis (b) of Proposition \ref{varphialpha} is thus satisfied with $b=a_W+1$. This completes the proof of the lemma.
\end{proof}

\begin{lemma}\label{openfinite} Let $M$ be a free $A[G]$-lattice in $K_\infty$. There is an open, projective $R$-submodule $Q$ of $K_\infty$ with the property that $K_\infty=M\oplus Q$. In addition, if $j\in\NN$ is such that $t^{-j}B_{K_\infty}\subseteq Q$ (with $B_{K_\infty}$ the closed unit ball in $K_\infty$), then $Q/t^{-j}B_{K_\infty}$ is finite.
\end{lemma}
\begin{proof}
We recall that $\Fi$ decomposes as an $\Fq$-space as $A\oplus t^{-1}\Fq[\![t^{-1}]\!]$, with the second summand open in $\Fi$. Let $\mathbf{b}=\{b_1, \ldots, b_n\}$ be a basis of $M$. Then $Q :=\bigoplus_{i=1}^n t^{-1}\Fq[\![t^{-1}]\!][G]b_i=\bigoplus_{i=1}^n t^{-1}R[\![t^{-1}]\!]b_i$ is open in $K_\infty$ because each summand is, gives the $R$-decomposition $K_\infty=M\oplus Q$, and is $R$-projective by Remark \ref{VGct}.

To prove the second claim it suffices to observe that there is $a\geq 0$ with $t^{-a}Q\subseteq B_{K_\infty}$, since then $t^{-(a+j)}Q\subseteq t^{-j}B_{K_\infty}\subseteq Q$, and $Q/t^{-(a+j)}Q$ is finite. Indeed, since the norm of $t^{-1}$ is normalised to be $q^{-1}$, this is true whenever $q^{a}$ is greater than the norms of all elements in the given basis $\mathbf{b}$.
\end{proof}

Returning to the proof of claim (ii) of Theorem \ref{POdiag}, 
for each $N\in\NN$, we write $$\tilde\sigma_N\,\in\, \left\{{\rm Id}_n+Z^{2N}M_n(\Lambda)\right\}$$ for the image of $\sigma_N$. Then, in order to relate the right-hand side of the equality in claim (ii) of Theorem \ref{POdiag} to the right-hand side of the equality in Lemma \ref{commonOL}, 
we require the following intermediate result.

\begin{lemma}\label{itsapowerseries} Fix $N_0\in\NN$. Then, in $K_0(A[G],\FiG)$, one has
$$\bigl[M^1,\lambda,\cm\bigr]+\KFit_G(M^2)=(\partial_G\circ{\rm ev}_{t^{-1}})\left(\left[\tilde\sigma_{N_0}\right]^{-1}\cdot\frac{\left[\Lambda\otimes_R M^2_{N_0},\,1-(Z\otimes t)\right]}{\left[\Lambda\otimes_R M^1_{N_0}/\cm,\,1-(Z\otimes t)\right]}\right).$$
\end{lemma}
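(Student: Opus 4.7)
The plan is to first apply $\ev_{t^{-1}}$ so that the RHS becomes $\partial_G$ of an element of $K_1(\FiG)$, and then to manipulate the result via the factorisation $\tau_M=(t\otimes 1)\circ(1-(t^{-1}\otimes t))$ in $\End_{\FiG}(\FiG\otimes_R M)$ that underlies the definition of $c_G(M)$.

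The first main step is to apply the multiplicativity of characteristic classes (Lemma \ref{additivity}) to the filtrations $\sigma_{N_0}(\UU)\subseteq\sigma_{N_0}(M^1)\subseteq M^1_{N_0}$ and $\cm\subseteq\cN\subseteq M^1_{N_0}$, combined with the defining short exact sequences of $M^2_{N_0}$ (in diagram (\ref{ýetanotherdiagram})) and of $M^2$ (from the first column of the diagram in Theorem \ref{POdiag}(i)), yielding
\begin{equation*}c_G(M^2_{N_0}) \,=\, c_G(M^2)\cdot c_G\bigl(M^1_{N_0}/\sigma_{N_0}(M^1)\bigr),\quad c_G(M^1_{N_0}/\cm) \,=\, c_G(\cN/\cm)\cdot c_G(M^1_{N_0}/\cN),\end{equation*}
where one uses that $\sigma_{N_0}$ restricts to an $A[G]$-linear isomorphism $M^1/\UU\xrightarrow{\sim}\sigma_{N_0}(M^1)/\sigma_{N_0}(\UU)$. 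The analogous multiplicativity for the $[\FiG\otimes_R(-),t\otimes 1]$-classes, and hence for the $[\FiG\otimes_R(-),1-(t^{-1}\otimes t)]$-classes, factorises the ratio in the RHS in a parallel way.

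Next, the key local identity $\partial_G\bigl([\FiG\otimes_R M,\,1-(t^{-1}\otimes t)]\bigr)=\KFit_G(M)-\KFit_G(M_R)$ in $K_0(A[G],\FiG)$, with $M_R$ denoting $M$ equipped with zero $t$-action, follows from applying Lemma \ref{Fitindependence} to the projective presentations $0\to M'\xrightarrow{\tau_M}M'\to M\to 0$ and $0\to M'\xrightarrow{t\otimes 1}M'\to M_R\to 0$. Combined with the $R$-module isomorphism $M^2_{N_0}\cong M^1_{N_0}/\cm$ that arises from the cancellation argument in the proof of Lemma \ref{commonOL}, together with multiplicativity of $\KFit_G((-)_R)$ under $R$-module short exact sequences, this forces the four arising $\KFit_G((-)_R)$-terms to cancel out and reduces the lemma to the identity
$$\KFit_G(M^1_{N_0}/\sigma_{N_0}(M^1))-\KFit_G(\cN/\cm)-\KFit_G(M^1_{N_0}/\cN)+\partial_G([\sigma_{N_0}^{-1}]) \,=\, [M^1,\lambda,\cm]$$
in $K_0(A[G],\FiG)$.

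Finally, I would convert the remaining Fitting classes into the $K_0(A[G],\FiG)$-classes $[\sigma_{N_0}(M^1),\iota,M^1_{N_0}]$, $[\cm,\iota,\cN]$ and $[\cN,\iota,M^1_{N_0}]$ via Lemma \ref{Fitindependence}, and then use compositional additivity $[P,\phi,Q]+[Q,\psi,R]=[P,\psi\phi,R]$ in relative $K_0$ to telescope. The crucial observation is that, since $\sigma_{N_0}$ restricts to an $A[G]$-linear isomorphism $M^1\xrightarrow{\sim}\sigma_{N_0}(M^1)$, the class $\partial_G([\sigma_{N_0}^{-1}])$ coincides with $[M^1,\id_{K_\infty},\sigma_{N_0}(M^1)]$ in $K_0(A[G],\FiG)$, and the telescoping then yields the required equality. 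The main technical obstacles are the identification of $\partial_G([\sigma_{N_0}^{-1}])$ with this $K_0$-class, which requires careful tracking of the various $A[G]$-bases relative to the basis $b_\bullet$, together with the verification of the $R$-module isomorphism $M^2_{N_0}\cong M^1_{N_0}/\cm$ that underpins the cancellation of the $\KFit_G((-)_R)$-terms.
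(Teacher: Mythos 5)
Your overall strategy follows the paper's: evaluate via $\ev_{t^{-1}}$, reduce to comparing $\KFit_G$-classes of the quotients $M^2_{N_0}$, $M^1_{N_0}/\cm$, $M^1_{N_0}/\sigma_{N_0}(M^1)$, and then telescope in $K_0(A[G],\FiG)$. Your telescoping at the end (converting the Fitting classes into relative $K_0$-classes via Lemma \ref{Fitindependence}, and identifying $\partial_G([\sigma_{N_0}^{-1}])$ with $[M^1,\id_{K_\infty},\sigma_{N_0}(M^1)]$) is correct, as is the factorisation argument that reduces the power-series class to the ratio $c_G(M^2_{N_0})\cdot c_G(M^1_{N_0}/\cm)^{-1}$.

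However, there is a genuine gap in the derivation of the identity $c_G(M^2_{N_0}) = c_G(M^2)\cdot c_G(M^1_{N_0}/\sigma_{N_0}(M^1))$, which is in fact the crux of the whole argument. You propose to obtain it by applying Lemma \ref{additivity} to the short exact sequences $0\to M^1_{N_0}/\sigma_{N_0}(\UU)\to M^2_{N_0}\to H\to 0$ and $0\to M^1/\UU\to M^2\to H\to 0$ (the first columns of the two diagrams), together with the filtration $\sigma_{N_0}(\UU)\subseteq\sigma_{N_0}(M^1)\subseteq M^1_{N_0}$. But Lemma \ref{additivity} requires two of the three modules in a sequence to be finite and $G$-cohomologically-trivial, and in those first-column sequences only the middle term ($M^2$, resp.\ $M^2_{N_0}$) is $G$-c.t.: the class group $H$ is not $G$-c.t.\ in general (its $G$-module structure being the whole point of the paper), and a diagram chase then forces $M^1/\UU$ and $M^1_{N_0}/\sigma_{N_0}(\UU)$ not to be $G$-c.t.\ either. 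In particular, $c_G(H)$, $c_G(M^1/\UU)$ and $c_G(M^1_{N_0}/\sigma_{N_0}(\UU))$ are not defined, so the sequences you cite cannot be fed into Lemma \ref{additivity} at all. The paper circumvents this precisely by constructing an explicit injective $A[G]$-linear map $\mu:M^2\to M^2_{N_0}$ and showing via the snake lemma that $\cok(\mu)\cong M^1_{N_0}/\sigma_{N_0}(M^1)$: then all three terms of $0\to M^2\to M^2_{N_0}\to M^1_{N_0}/\sigma_{N_0}(M^1)\to 0$ are $G$-c.t.\ and Lemma \ref{additivity} applies. The subtlety is that the $G$-action on $M^2=\exp_E(M^1/\UU)\oplus s(H)$ is twisted (the splitting $s$ is only $A$-linear), so $G$-equivariance of $\mu$ has to be checked by hand against the explicit formula (\ref{Gaction}), using the commutativity of the push-out square (\ref{NPOut}). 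This is not addressed by the observation that $\sigma_{N_0}$ restricts to an $A[G]$-isomorphism $M^1/\UU\cong\sigma_{N_0}(M^1)/\sigma_{N_0}(\UU)$, and is the essential missing step in your proposal.
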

\begin{proof}
To compute the right-hand side of the claimed equality we recall from the proof of Lemma \ref{commonOL} that there is an isomorphism of $R$-modules $\beta:M^1_{N_0}/\cm \cong M^2_{N_0}$. Thus
\begin{multline*}\evt\left(\frac{\left[\Lambda\otimes_R M^2_{N_0},1-(Z\otimes t)\right]}{\left[\Lambda\otimes_R M^1_{N_0}/\cm,1-(Z\otimes t)\right]}\right)=\evt\left(\frac{\left[\Lambda\otimes_R M^2_{N_0},1-(Z\otimes t)\right]}{\left[\Lambda\otimes_R M^2_{N_0},1-(Z\otimes \beta t\beta^{-1})\right]}\right)\\
=\frac{\left[\FiG\otimes_R M^2_{N_0},t\otimes 1\right]}{\left[\FiG\otimes_R M^2_{N_0},t\otimes 1\right]}\cdot\frac{\left[\FiG\otimes_R M^2_{N_0},1-(t^{-1}\otimes t)\right]}{\left[\FiG\otimes_R M^2_{N_0},1-(t^{-1}\otimes \beta t\beta^{-1})\right]}
\\=\frac{\left[\FiG\otimes_R M^2_{N_0},(t\otimes 1)-(1\otimes t)\right]}{\left[\FiG\otimes_R M^2_{N_0},(t\otimes 1)-(1\otimes \beta t\beta^{-1})\right]}
\\=\frac{\left[\FiG\otimes_R M^2_{N_0},(t\otimes 1)-(1\otimes t)\right]}{\left[\FiG\otimes_R M^1_{N_0}/\cm,(t\otimes 1)-(1\otimes t)\right]}=\frac{c_G(M^2_{N_0})}{c_G(M^1_{N_0}/\cm)}.
\end{multline*}
Since also
$$(\partial_G\circ{\rm ev}_{t^{-1}})([\tilde\sigma_{N_0}])=\partial_G([K_\infty,\sigma_{N_0}])=\partial_G([\FiG\otimes_{A[G]}M^1,\sigma_{N_0}])=[M^1,\sigma_{N_0},M^1]$$
by definition of $\partial_G$ (or by \cite[Lem. 15.7]{swan}, but note that there is a typo in this result, where $[(P,f,P)]$ should read $[(P,g,P)]$),
we find that the right-hand side of the claimed equality is equal to
$$-[M^1,\sigma_{N_0},M^1]+\KFit_G(M^2_{N_0})-\KFit_G(M^1_{N_0}/\cm).$$

Applying Lemma \ref{Fitindependence} to the exact sequences
$$0\longrightarrow\cm\longrightarrow M^1_{N_0}\longrightarrow M^1_{N_0}/\cm\to 0$$
and
$$0\to M^1\xrightarrow{\sigma_{N_0}}M^1_{N_0}\longrightarrow M^1_{N_0}/\sigma_{N_0}(M^1)\to 0$$
we also find that
\begin{align*}[M^1,\lambda,\cm]+[M^1,\sigma_{N_0},M^1]+\KFit_G(M^1_{N_0}/\cm)=&[M^1,\lambda,M^1_{N_0}]+[M^1,\sigma_{N_0},M^1]\\=&[M^1,\sigma_{N_0},M^1_{N_0}]\\=&\KFit_G(M^1_{N_0}/\sigma_{N_0}(M^1)).\end{align*}
It therefore suffices to prove that
\begin{equation}\label{finalTODO}\KFit_G(M^1_{N_0}/\sigma_{N_0}(M^1))=\KFit_G(M^2_{N_0})-\KFit_G(M^2).
\end{equation}
In order to do so, we define a map
$$\mu:M^2=\exp_E(M^1/\UU)\oplus s(H)\longrightarrow \iota_{N_0}(M^1_{N_0}/\sigma_{N_0}(\UU))\oplus (\pi_{N_0}\circ s)(H)=M^2_{N_0}$$
by setting
$$\mu(\exp_E(\bar m),s(h)):=\left(\iota_{N_0}(\overline{\sigma_{N_0}(m)}),\pi_{N_0}(s(h))\right)$$
for $m\in M^1$ and $h\in H$. Here $\iota_{N_0},\pi_{N_0}$ are as in the push-out diagram (\ref{NPOut}).

Using the commutativity of (\ref{NPOut}) and (\ref{ýetanotherdiagram}) and the explicit $G$-action (\ref{Gaction}) on the modules $M^2$ and $M^2_{N_0}$, it is straightforward to verify that $\mu$ is a well-defined, injective $A[G]$-module homomorphism that makes the following exact diagram commute:
\begin{equation*}
\begin{CD}@. 0 @. 0 @. 0 @. 0 @.\\
@. @V   VV  @V   VV @VV V @VV V @.\\
0 @>  >> \mathbb{U} @>  >> M^1 @>  >> M^2 @>  >> H @>  >> 0\\
@. @V \sigma_{N_0} VV @V \sigma_{N_0}  VV @VV \mu V @\vert @.\\
0 @>  >> \sigma_{N_0}(\mathbb{U}) @>  >> M^1_{N_0} @>  >> M^2_{N_0} @>  >> H @>  >> 0\\
@. @V   VV  @V   VV @VV V @VV V @.\\
@. 0 @. M^1_{N_0}/\sigma_{N_0}(M^1) @. \cok(\mu) @. 0 @.\\
@. @.  @V   VV @VV V @. @.\\
@.  @. 0 @. 0. @.  @.
\end{CD}\end{equation*}

By the snake lemma, we find that $M^1_{N_0}/\sigma_{N_0}(M^1)$ is isomorphic to $\cok(\mu)$ and therefore (by Lemma \ref{additivity}) also that
$$\KFit_G(M^1_{N_0}/\sigma_{N_0}(M^1))=\KFit_G(\cok(\mu))=\KFit_G(M^2_{N_0})-\KFit_G(M^2).$$
We have now proved the equality (\ref{finalTODO}) and thus completed the proof.
\end{proof}

In order to complete the proof of Theorem \ref{POdiag}, we now require certain $K$-theoretical results.
\begin{lemma}\label{limKN} The canonical map $K_1(\Lambda)\to K_0(R,\Lambda)$ is surjective, and the isomorphism in Proposition \ref{151} induces an isomorphism of abelian groups
\begin{equation*}K_0(R,\Lambda)\,\cong\,{\varprojlim}_{N\in\NN}K_0(R,\Lambda/Z^N).\end{equation*}\end{lemma}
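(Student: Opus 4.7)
The plan is to reduce this $K_0$-level statement to the $K_1$-level statement of Proposition \ref{151}, by exploiting the localisation sequences attached to the ring inclusions $R\subset\Lambda$ and $R\subset\Lambda/Z^N$. First I would write down the exact sequences
\begin{equation*}K_1(R)\to K_1(\Lambda)\to K_0(R,\Lambda)\to K_0(R)\to K_0(\Lambda)\end{equation*}
and, for each $N\geq 1$,
\begin{equation*}K_1(R)\to K_1(\Lambda/Z^N)\to K_0(R,\Lambda/Z^N)\to K_0(R)\to K_0(\Lambda/Z^N).\end{equation*}
The key observation is that the $\Fq$-algebra maps $\Lambda\to R$ and $\Lambda/Z^N\to R$ sending $Z$ to $0$ are retractions of the corresponding inclusions of $R$; hence the induced maps $K_i(R)\to K_i(\Lambda)$ and $K_i(R)\to K_i(\Lambda/Z^N)$ are split injective for $i\in\{0,1\}$. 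Combined with exactness, this promotes the two localisation sequences to split short exact sequences
\begin{equation*}0\to K_1(R)\to K_1(\Lambda)\to K_0(R,\Lambda)\to 0\end{equation*}
and
\begin{equation*}0\to K_1(R)\to K_1(\Lambda/Z^N)\to K_0(R,\Lambda/Z^N)\to 0.\end{equation*}

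Next I would apply $\varprojlim_N$ to the second family. Since the reductions $\Lambda/Z^{N+1}\to\Lambda/Z^N$ restrict to the identity on $R$, the inverse system $\{K_1(R)\}_N$ is constant with identity transition maps, so $\varprojlim_N{}^1 K_1(R)=0$ and taking $\varprojlim_N$ preserves exactness. Combining the resulting short exact sequence with the isomorphism $K_1(\Lambda)\cong\varprojlim_N K_1(\Lambda/Z^N)$ of Proposition \ref{151}, and noting that the retractions $\Lambda\to R$ and $\Lambda/Z^N\to R$ are compatible along the tower (so that the isomorphism of Proposition \ref{151} identifies both the canonical $K_1(R)$-subgroups and the splittings induced by $Z\mapsto 0$), the induced map of quotients would yield the desired isomorphism
\begin{equation*}K_0(R,\Lambda)\,\cong\,\varprojlim_N K_0(R,\Lambda/Z^N).\end{equation*}

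The main obstacle here is not really conceptual: once Proposition \ref{151} is in hand, the argument reduces to careful homological bookkeeping. The two points to verify with care are (i) the exactness of $\varprojlim_N$ on the displayed sequences, which rests on the trivial vanishing of $\varprojlim{}^1$ for a constant system with identity maps, and (ii) the compatibility of all the natural maps, which follows from the observation that the retractions induced by $Z\mapsto 0$ commute with the tower $\{\Lambda/Z^N\}_N$. Both of these are immediate from the definitions, so the genuinely non-trivial input is entirely concentrated in Proposition \ref{151}.
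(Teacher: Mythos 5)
Your proposal is correct and uses the same basic framework as the paper (localisation sequences for the ring inclusions, passage to the inverse limit, and Proposition \ref{151}), but with a cleaner technical device at the key step. The paper does not observe that the retractions $Z\mapsto 0$ split the maps $K_1(R)\to K_1(\Lambda/Z^N)$; instead it introduces the images $K_1(R)_N:=\im\bigl(K_1(R)\to K_1(\Lambda/Z^N)\bigr)$, obtains tautologically short exact sequences $0\to K_1(R)_N\to K_1(\Lambda/Z^N)\to K_0(R,\Lambda/Z^N)\to 0$, and then asserts (without justification, though it follows from the finiteness of $K_1(R)$) that $K_1(R)\to\varprojlim_N K_1(R)_N$ is surjective before applying the snake lemma. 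Your retraction observation shows at once that $K_1(R)_N=K_1(R)$ for every $N$ and that the transition maps on this subsystem are identities, so the constant-system argument for $\varprojlim^1=0$ applies and the surjectivity claim becomes vacuous. This also transparently replaces the paper's appeal to faithful flatness of $\Lambda$ and $\Lambda/Z^N$ over $R$, which was used to truncate the localisation sequence on the right: split injectivity of $K_0(R)\to K_0(\Lambda)$ and $K_0(R)\to K_0(\Lambda/Z^N)$ serves the same purpose. Your version is tighter and closes a small gap in the paper's exposition.
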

\begin{proof} 
The composite map $R\subset\Lambda\to\Lambda/Z$ is an isomorphism of rings. From this fact, it is straightforward to verify that the group $K_0(R,\Lambda/Z)$ is trivial. Since the canonical map $K_0(R,\Lambda)\to K_0(R)$ factors through $K_0(R,\Lambda/Z)$, it must be the zero map. The exactness of the localisation sequence \cite[Thm. 15.5]{swan} then proves the first claim.

For each $N\in\NN$, the canonical map $K_0(R,\Lambda/Z^N)\to K_0(R)$ also factors through $K_0(R,\Lambda/Z)$, so the same argument proves that the canonical map $K_1(\Lambda/Z^N)\to K_0(R,\Lambda/Z^N)$ is surjective.
We therefore have an exact sequence
$$K_1(R)\to K_1(\Lambda)\to K_0(R,\Lambda)\to 0$$
and, for each $N$, setting $$K_1(R)_N:=\im\left(K_1(R)\to K_1(\Lambda/Z^N)\right),$$
also an exact sequence of finite abelian groups 
$$0\to K_1(R)_N\to K_1(\Lambda/Z^N)\to K_0(R,\Lambda/Z^N)\to 0.$$

Now, the map $K_1(R)\to\varprojlim_{N\in\NN}K_1(R)_N$ is surjective. 
Therefore, the result follows from applying the snake lemma to the exact commutative diagram
\begin{equation*}\minCDarrowwidth 1.8em
\begin{CD}
@. K_1(R) @>  >> K_1(\Lambda) @>   >> K_0(R,\Lambda) @>  >>  0\\
@. @V   VV @V   VV @VV  V\\
0 @>  >> {\varprojlim}_{N\in\NN}K_1(R)_N @>  >> {\varprojlim}_{N\in\NN}K_1(\Lambda/Z^N) @>  >> {\varprojlim}_{N\in\NN}K_0(R,\Lambda/Z^N) @>  >>  0.
\end{CD}\end{equation*}
\end{proof}

We also require the next $K$-theoretic result, which will be proved in \S \ref{Nsection} below.

\begin{proposition}\label{Braunling} The canonical map $$\evt^0:K_0(R,\Lambda)\to K_0(R[Z^{-1}],R(\!(Z)\!))\xrightarrow{Z\mapsto t^{-1}}K_0(A[G],\FiG)$$ is injective.\end{proposition}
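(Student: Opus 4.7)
The plan is to combine the relative $K$-theory localization sequences for the inclusions $R \subset \Lambda = R[[t^{-1}]]$ and $A[G] = R[t] \subset R((t^{-1})) = \FiG$ into a commutative ladder, diagram-chase to reduce the injectivity of $\evt^0$ to an intersection property in $K_1(\FiG)$, and then establish that property via an excision-type argument attached to the additive decomposition $\FiG = R[t] + \Lambda$ with $R = R[t] \cap \Lambda$.

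First I would write down the localization sequence
\[K_1(R) \to K_1(\Lambda) \to K_0(R,\Lambda) \to K_0(R) \to K_0(\Lambda),\]
whose final arrow is split injective via the retraction $t^{-1} \mapsto 0$ of $R \to \Lambda$, and the parallel sequence
\[K_1(A[G]) \to K_1(\FiG) \to K_0(A[G],\FiG) \to K_0(A[G]) \to K_0(\FiG).\]
By Bass--Heller--Swan, $K_0(R) \cong K_0(A[G])$, so a straightforward diagram chase identifies $\ker(\evt^0)$ with $\mathcal{I}/\im(K_1(R))$, where $\mathcal{I} \subseteq K_1(\Lambda)$ is the preimage of $\im(K_1(A[G]) \to K_1(\FiG))$. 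Thus the injectivity of $\evt^0$ is equivalent to the intersection equality
\[\im(K_1(\Lambda)) \cap \im(K_1(A[G])) \,=\, \im(K_1(R)) \qquad \text{inside } K_1(\FiG).\]

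The inclusion $\supseteq$ is automatic. For the reverse inclusion, I would exploit the decomposition of $R$-bimodules $\FiG = R[t] \oplus t^{-1}\Lambda$ together with $R[t] \cap \Lambda = R$, which make the square of ring inclusions $R \to A[G], R \to \Lambda, A[G] \to \FiG, \Lambda \to \FiG$ additively cartesian. The goal is to promote this to a Mayer--Vietoris-type exact sequence in low-degree $K$-theory
\[K_1(R) \to K_1(\Lambda) \oplus K_1(A[G]) \to K_1(\FiG),\]
whose exactness at the middle term is precisely the desired intersection property.

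The main obstacle is that this square is not a Milnor square (neither map into $\FiG$ is surjective), so the classical Milnor--Bass Mayer--Vietoris theorem does not apply directly. This is where the advice of Braunling acknowledged in the introduction presumably enters: I expect one must establish the required excision result by a direct matrix-level argument that uses the adic structure of $\Lambda$ (the decomposition $\Lambda^\times = R^\times \cdot (1 + t^{-1}\Lambda)$ and the control of $K_1(\Lambda)$ as an inverse limit afforded by Proposition \ref{151}) together with the Bass--Heller--Swan splitting $K_1(A[G]) = K_1(R) \oplus NK_1(R)$. Given $U \in GL_n(\Lambda)$ whose class in $K_1(\FiG)$ coincides with that of some $V \in GL_m(A[G])$, the strategy is to iteratively modify $U$ and $V$ by commutators and elementary matrices, using the additive splitting of $\FiG$ to truncate successive powers of $t^{-1}$, and converge in the profinite topology to a common $GL(R)$-representative. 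This successive-approximation step, in which the adic topology developed in \S\ref{4} genuinely enters the argument, is the part of the proof I expect to be most technically delicate.
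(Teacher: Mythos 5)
Your reduction step has a subtle but genuine flaw, and more importantly the hard part of the argument is left as a wish that is unlikely to be fulfilled as described.

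On the reduction: your diagram chase correctly identifies $\ker(\evt^0)$ with $\mathcal{I}/\im(K_1(R))$, where $\mathcal{I}\subseteq K_1(\Lambda)$ is the \emph{preimage} under $\evt$ of $\im(K_1(A[G])\to K_1(\FiG))$. But the step ``Thus the injectivity of $\evt^0$ is equivalent to $\im(K_1(\Lambda))\cap\im(K_1(A[G]))=\im(K_1(R))$'' only holds if $\evt\colon K_1(\Lambda)\to K_1(\FiG)$ is injective, which is not known (its kernel is controlled by the $K_1$ of the category of $Z$-torsion modules in the localization sequence, and there is no reason for it to vanish). What you actually need is the \emph{fiber-product} statement: the square formed by $K_1(R)$, $K_1(\Lambda)$, $K_1(A[G])$, $K_1(\FiG)$ is weakly cartesian, i.e.\ $K_1(R)\to K_1(\Lambda)\times_{K_1(\FiG)}K_1(A[G])$ is surjective. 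This is precisely Proposition \ref{abcd} in the paper; it is strictly stronger than your intersection equality.

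On the main step: the paper does \emph{not} attempt a direct Mayer--Vietoris for the non-Milnor square built on $\Lambda$ and $A[G]$, and it does \emph{not} use the adic topology or Proposition \ref{151} at this point. Instead it interposes the polynomial ring $R[Z]$. One has two multiplicative localization sequences (Quillen/Weibel, \cite[Thm.~V.7.1]{K-book}) for the central multiplicative set $S=\{Z^j\}$, one over $R[Z]$ with localization $R[Z,Z^{-1}]$, the other over $\Lambda=R[[Z]]$ with localization $\calL=R((Z))$. The crucial new input (Proposition \ref{catequiv}) is that scalar extension is an \emph{equivalence} of the $S$-torsion categories $H_S(R[Z])\simeq H_S(\Lambda)$: since $S$-torsion modules are already $Z$-adically complete, the difference between $R[Z]$ and $R[[Z]]$ disappears, and this makes the vertical maps $\zeta_0,\zeta_1$ in Proposition \ref{Weibel17Prop} bijective. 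One then reduces the fiber-product statement to the classical Bass--Heller--Swan Mayer--Vietoris sequence for $R$, $R[Z]$, $R[Z^{-1}]$, $R[Z,Z^{-1}]$ (\cite[Thm.~V.8.2]{K-book}), which already handles the ``not a Milnor square'' issue by different means (the Fundamental Theorem). The successive-approximation matrix argument you outline is not what happens, and given that $NK_1(R)$ need not vanish for $R=\Fq[G]$ (which is not regular when $\ell\mid|G|$), it is unclear how one would make such an approximation scheme converge against the ``nilpotent'' part of $K_1(A[G])$.
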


We next observe that there is a canonical commutative square
\begin{equation}\label{evsquare}
\begin{CD}
K_1(\Lambda) @> \widehat\partial_G  >> K_0(R,\Lambda)\\
@V \evt  VV @VV \evt^0 V\\
K_1(\FiG) @> \partial_G >> K_0(A[G],\FiG).
\end{CD}\end{equation}
For each $N\in\NN$, we set
$$Y'(N):=\left[\tilde\sigma_{N}\right]^{-1}\cdot\frac{\left[\Lambda\otimes_R M^2_{N},\,1-(Z\otimes t)\right]}{\left[\Lambda\otimes_R M^1_{N}/\cm,\,1-(Z\otimes t)\right]}\in K_1(\Lambda).$$
By Lemma \ref{itsapowerseries} and the commutativity of (\ref{evsquare}),
$$\evt^0(\widehat\partial_G(Y'(N))) = (\partial_G\circ\evt)(Y'(N))= \bigl[M^1,\lambda,\cm\bigr]+\KFit_G(M^2).$$
In particular, Proposition \ref{Braunling} implies that the pre-image $Y:=\widehat\partial_G(Y'(N))\in K_0(R,\Lambda)$ of the right-hand side under $\evt^0$ is independent of $N\in\NN$.

Now, Corollary \ref{tfcor} combines with the commutativity of (\ref{evsquare}) to reduce the proof of the equality in claim (ii) of Theorem \ref{POdiag} to proving that
\begin{equation}\label{hatpartial}\widehat\partial_G\left(\left[1+\Phi_E\mid V_\infty^\cm\right]\right)\,=\,Y.\end{equation}

In turn, by Lemma \ref{limKN}, the equality (\ref{hatpartial}) will be valid if and only if its projection to $K_0(R,\Lambda/Z^N)$ is valid for every $N\in\NN$, if and only if its projection to $K_0(R,\Lambda/Z^N)$ is valid for every large enough $N\in\NN$. We may thus fix $N_0\in\NN$, large enough that Lemma \ref{commonOL} applies to $N=N_0$, and proceed to verify that the projection to $K_0(R,\Lambda/Z^{N_0})$ of the equality (\ref{hatpartial}) is valid.

Using now the equality $Y=\widehat\partial_G(Y'(N_0))$ and the commutativity of the square
\begin{equation*}
\begin{CD}
K_1(\Lambda) @>  \widehat\partial_G >> K_0(R,\Lambda)\\
@V   VV @VV  V\\
K_1(\Lambda/Z^{N_0}) @>  >> K_0(R,\Lambda/Z^{N_0}),
\end{CD}\end{equation*}
it suffices to show that the image of $Y'(N_0)$ in $K_1(\Lambda/Z^{N_0})$ is equal to $\left[1+\Phi_E\mid V_\infty^\cm\right]_{N_0}$.

But, by choice of $\sigma_{N_0}$, the image in $K_1(\Lambda/Z^{N_0})$ of $\left[\tilde\sigma_{N_0}\right]$ is trivial, and hence the image of $Y'(N_0)$ in $K_1(\Lambda/Z^{N_0})$ is equal to
$$\frac{\left[(M^2_{N_0})_{N_0},\,1-(Z\otimes t)\right]}{\left[(M^1_{N_0}/\cm)_{N_0},\,1-(Z\otimes t)\right]}.$$
The proof of Theorem \ref{POdiag} is now completed by simply applying Lemma \ref{commonOL} to $N=N_0$.

\subsection{The proofs of Proposition \ref{otherKthm} and Proposition \ref{Braunling}}\label{Nsection}
We are grateful to Oliver Braunling for his advice regarding the argument that we present in this section.

We set $R:=\Fq[G]$, $\Lambda:=R[\![Z]\!]$, $\Pi:=R[Z^{-1}]$ and $\calL:=R(\!(Z)\!)$. By \cite[Thm. 15.5]{swan} and Lemma \ref{limKN}, we have the exact commutative diagram of abelian groups
\begin{equation*}\label{Kkeydiagram}
\begin{CD}
K_1(R) @> \alpha >> K_1(\Lambda) @>  \widehat\partial >> K_0(R,\Lambda) @>  >>  0\\
@V \beta  VV @V \delta  VV @VV \varepsilon V\\
K_1(\Pi) @> \gamma >> K_1(\calL) @> \partial >> K_0(\Pi,\calL).
\end{CD}\end{equation*}
The claims of Proposition \ref{otherKthm} and Proposition \ref{Braunling} are then equivalent to:
\begin{proposition}\label{injectivedeltas} The maps $\delta$ and $\varepsilon$ are both injective.
\end{proposition}

In the rest of this section, we prove Proposition \ref{injectivedeltas}. At the outset we fix notation for the following canonical maps:
\begin{align*}\beta':K_1(R)\to K_1(R[Z]),\,\,&\theta:K_1(R[Z])\to K_1(R[Z,Z^{-1}]),\,\,\iota:K_1(R[Z])\to K_1(\Lambda),\\ &\theta':K_1(\Pi)\to K_1(R[Z,Z^{-1}]),\,\,\,\,\,\,\,\lambda:K_1(R[Z,Z^{-1}])\to K_1(\calL).\end{align*}
We then observe the following relations:
\begin{equation}\label{relations}\alpha=\iota\circ\beta',\,\,\,\,\,\,\,\,\gamma=\lambda\circ\theta',\,\,\,\,\,\,\,\,\delta\circ\iota=\lambda\circ\theta.\end{equation}

\subsubsection{} Before proving Proposition \ref{injectivedeltas}, we adapt several useful results from \cite{K-book}. 

The set $S:=\{Z^j \mid j\geq 0\}$ is a central, multiplicatively closed subset of non-zero divisors in both of the rings $R[Z]$ and $\Lambda$.

We write $H(R[Z])$, resp. $H(\Lambda)$, for the category of $R[Z]$-modules, resp. $\Lambda$-modules, that admit a finite resolution by finitely generated projective $R[Z]$-modules, resp. $\Lambda$-modules. We then write $H_S(R[Z])$ and $H_S(\Lambda)$ for their respective exact subcategories comprising $S$-torsion modules. The following result extends the commutative square $\delta\circ\iota=\lambda\circ\theta$ from (\ref{relations}) to the $K_0$-groups of these categories.

\begin{proposition}\label{Weibel17Prop} There is a commutative diagram of abelian groups with exact rows
\begin{equation*}
\begin{CD}
1 @>  >> K_1(R[Z]) @>  \theta >> K_1(R[Z^{-1},Z]) @> \mu  >> K_0(H_S(R[Z]))\\
@. @V \iota  VV @VV \lambda  V @VV \zeta_0  V\\
1 @>  >> K_1(\Lambda) @> \delta >> K_1(\calL) @> \nu  >> K_0(H_S(\Lambda))
\end{CD}\end{equation*}
in which $\zeta_0$ is bijective. In particular, $\delta$ is injective, as claimed in Proposition \ref{injectivedeltas}.
\end{proposition}
\begin{proof}
We first claim that there is an exact commutative diagram of abelian groups
\begin{equation*}\label{Weibel7}
\begin{CD}
K_1(H_S(R[Z])) @> \eta >> K_1(R[Z]) @>  \theta >> K_1(R[Z^{-1},Z]) @> \mu  >> K_0(H_S(R[Z]))\\
@V  \zeta_1 VV @V \iota  VV @VV \lambda  V @VV \zeta_0  V\\
K_1(H_S(\Lambda)) @> \kappa >> K_1(\Lambda) @> \delta >> K_1(\calL) @> \nu  >> K_0(H_S(\Lambda))
\end{CD}\end{equation*}
in which $\zeta_0$ and $\zeta_1$ are induced by the scalar extension functor, which is exact by Proposition \ref{catequiv} below.

After noting that the localisations $S^{-1}R[Z]$ and $S^{-1}\Lambda$ identify with $R[Z^{-1},Z]$ and with $\calL$ respectively, \cite[Thm. V.7.1]{K-book} gives the exactness of both rows. The naturality of these exact localisation sequences with respect to the scalar extension from $R[Z]$ to $\Lambda$ is implicitly stated in \cite[Prop. V.7.5(2)]{K-book}, but we proceed to give a direct argument instead.

To first consider the right-hand square we use the explicit description of the connecting homomorphisms $\mu$, $\nu$ from \cite[Cor. III.3.1.1]{K-book}. Every element of $K_1(R[Z^{-1},Z])$ may be represented by a pair $[R[Z^{-1},Z]^n,s^{-1}\otimes_{R[Z]}\alpha]$, where $s$ is an element of $S$ and $\alpha$ is an injective endomorphism of $R[Z]^n$ with the property that $R[Z^{-1},Z]\otimes_{R[Z]}\cok(\alpha)$ vanishes. One then has
\begin{align*}(\zeta_0\circ\mu)([R[Z^{-1},Z]^n,s^{-1}\otimes_{R[Z]}\alpha])&=\zeta_0([\cok(\alpha)]\cdot[R[Z]/sR[Z]]^{-n})\\
&=[\Lambda\otimes_{R[Z]}\cok(\alpha)]\cdot[\Lambda\otimes_{R[Z]}(R[Z]/sR[Z])]^{-n}\\
&=[\cok(\Lambda\otimes_{R[Z]}\alpha)]\cdot[\Lambda/s\Lambda]^{-n}\\
&=\nu([\calL^n,s^{-1}\otimes_\Lambda(\Lambda\otimes_{R[Z]}\alpha)])\\
&=(\nu\circ\lambda)([R[Z^{-1},Z]^n,s^{-1}\otimes_{R[Z]}\alpha]).
\end{align*}

To next consider the left-hand square we write $\calP(R[Z])$, resp. $\calP(\Lambda)$, for the category of finitely generated, projective $R[Z]$-modules, resp. $\Lambda$-modules. Then the canonical maps $\eta$ and $\kappa$ decompose as $\eta=\eta_1^{-1}\circ\eta_2$ and $\kappa=\kappa_1^{-1}\circ\kappa_2$ where $\eta_1,\eta_2,\kappa_1,\kappa_2$ are respectively induced by the inclusions of categories $\calP(R[Z])\to H(R[Z])$, $H_S(R[Z])\to H(R[Z])$, $\calP(\Lambda)\to H(\Lambda)$ and $H_S(\Lambda)\to H(\Lambda)$ (see the proof of \cite[Lem. V.7.4]{K-book}). In these decompositions we have also used the fact that $\eta_1$ and $\kappa_1$ are bijective, by \cite[Thm. V.3.2]{K-book}. Using also the fact that the scalar extension functor $H(R[Z])\to H(\Lambda)$ is exact by Lemma \ref{Lambdaflat}, and thus induces a map $\zeta'$ on $K_1$-groups, the definition of the maps $\eta_1,\eta_2,\kappa_1,\kappa_2$ implies the commutativity of the diagram
\begin{equation*}
\begin{CD}
K_1(H_S(R[Z])) @>  \eta_2 >> K_1(H(R[Z])) @< \eta_1  << K_1(R[Z])\\
@V \zeta_1  VV @VV \zeta'  V @VV \iota  V\\
K_1(H_S(\Lambda)) @> \kappa_2 >> K_1(H(\Lambda)) @< \kappa_1  << K_1(\Lambda).
\end{CD}\end{equation*}
The commutativity $\kappa\circ\zeta_1=\kappa_1^{-1}\circ\kappa_2\circ\zeta_1=\kappa_1^{-1}\circ\zeta'\circ\eta_2=\iota\circ\eta_1^{-1}\circ\eta_2=\iota\circ\eta$ of the left-hand square thus follows from that of the last displayed diagram.

We next observe that $\zeta_0$ and $\zeta_1$ are bijective, by Proposition \ref{catequiv} below.

We finally observe that $\theta$ is injective by \cite[Cor. III.3.5.5]{K-book}. It only remains to show that $\delta$ is also injective. But $\eta$ must be the trivial map and $\kappa=\iota\circ\eta\circ\zeta_1^{-1}$, so $\kappa$ must also be the trivial map, hence $\ker(\delta)=\im(\kappa)=1$, as required.
\end{proof}

\begin{proposition}\label{catequiv} The scalar extension functor is an exact equivalence of categories $H_S(R[Z])\to H_S(\Lambda)$, whose inverse is the forgetful functor.\end{proposition}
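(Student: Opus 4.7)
The plan is to exhibit the scalar extension functor as an equivalence by constructing an explicit quasi-inverse via restriction of scalars. The fundamental observation is that $\Lambda$ is by definition the $(Z)$-adic completion of $R[Z]$, so the canonical ring homomorphism $R[Z] \to \Lambda$ induces ring isomorphisms $R[Z]/Z^n \xrightarrow{\sim} \Lambda/Z^n$ for every $n \geq 1$. In particular, for any $R[Z]$-module $M$ with $Z^n M = 0$, there is a natural isomorphism of abelian groups
$$\Lambda \otimes_{R[Z]} M \,\cong\, (\Lambda/Z^n) \otimes_{R[Z]/Z^n} M \,\cong\, M,$$
under which the $\Lambda$-module structure on the left factors through $\Lambda/Z^n = R[Z]/Z^n$.

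First I would verify that the scalar extension functor $\Phi : M \mapsto \Lambda \otimes_{R[Z]} M$ takes $H_S(R[Z])$ into $H_S(\Lambda)$. Since $R = \mathbb{F}_q[G]$ is finite (hence Noetherian), $R[Z]$ is Noetherian and $\Lambda$ is flat over $R[Z]$; thus $\Phi$ preserves exact sequences and carries finitely generated projective $R[Z]$-modules to finitely generated projective $\Lambda$-modules, hence preserves finite f.g.\ projective resolutions. The isomorphism displayed above shows that $\Phi(M)$ is again $S$-torsion.

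Next I would define the candidate inverse $\Psi: H_S(\Lambda) \to H_S(R[Z])$ by restriction of scalars along $R[Z]\to\Lambda$. By the initial observation, both compositions $\Psi\Phi$ and $\Phi\Psi$ are naturally isomorphic to the identity on underlying modules, so it remains only to check that $\Psi$ respects the subcategory structure. Given $N \in H_S(\Lambda)$ with $Z^n N = 0$, finiteness of $N$ over $\Lambda$ forces finiteness over $\Lambda/Z^n = R[Z]/Z^n$, and hence over $R[Z]$. To construct a finite f.g.\ projective $R[Z]$-resolution of $N$, one takes iterated surjections from f.g.\ free $R[Z]$-modules; the process terminates because $\mathrm{pd}_{R[Z]}(N) < \infty$. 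To control this projective dimension, one combines the short exact sequence
$$0 \to R[Z] \xrightarrow{Z^n} R[Z] \to R[Z]/Z^n \to 0$$
(showing $\mathrm{pd}_{R[Z]}(R[Z]/Z^n) \leq 1$) with a change-of-rings argument applied to the flat map $R[Z] \to \Lambda$: finite $\Lambda$-projective dimension of $N$ transfers to finite $\Lambda/Z^n = R[Z]/Z^n$-projective dimension, and then to finite $R[Z]$-projective dimension via the above sequence.

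The main obstacle will be this last step. The natural temptation would be to simply restrict a finite $\Lambda$-resolution of $N$ to a finite $R[Z]$-resolution, but a finitely generated projective $\Lambda$-module is not finitely generated as an $R[Z]$-module, so one cannot proceed directly. Instead one must build an $R[Z]$-resolution of $N$ independently and bound its length. The delicate point is the change-of-rings argument that transfers finite projective dimension from $\Lambda$ to $R[Z]/Z^n$ via their common quotient, exploiting that $Z$ is a central non-zero-divisor in both rings.
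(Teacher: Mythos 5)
Your overall framing is right: restriction of scalars along $R[Z]\to\Lambda$ is the quasi-inverse, the underlying-module identifications $\Lambda\otimes_{R[Z]}M\cong M$ for $S$-torsion $M$ are correct, and the only genuine work is showing that restriction carries $H_S(\Lambda)$ into $H_S(R[Z])$, i.e.\ bounding the $R[Z]$-projective dimension. But the step you flag as ``delicate'' is in fact false, and this is a real gap.

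You claim that finite $\Lambda$-projective dimension of $N$ transfers to finite projective dimension over $\Lambda/Z^n\cong R[Z]/Z^n$. Consider $G$ trivial, $R=\Fq$, $\Lambda=\Fq[\![Z]\!]$, and $N=\Fq\oplus\Fq[Z]/Z^2$. Each summand has a length-one free $\Lambda$-resolution (multiplication by $Z$, resp.\ $Z^2$), so $N\in H_S(\Lambda)$ with $\mathrm{pd}_\Lambda(N)=1$. However, $\Fq[Z]/Z^n$ is a local Artinian Gorenstein ring, over which the only finitely generated modules of finite projective dimension are the free ones; since $\Fq$ is free over $\Fq[Z]/Z^n$ only when $n=1$ and $\Fq[Z]/Z^2$ is free only when $n=2$, no choice of $n$ gives $N$ finite projective dimension over $\Fq[Z]/Z^n$. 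So the change-of-rings route cannot produce the bound you need. (Of course $\mathrm{pd}_{\Fq[Z]}(N)\le 1$ directly, since $\Fq[Z]$ is a PID and $N$ is finitely generated torsion --- which shows the conclusion is true, but by a different mechanism than the one you propose.)

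The missing idea, which the paper uses, is the characterisation of projective modules over the group ring $R[Z]=\Fq[Z][G]$ as those finitely generated modules that are both $\Fq[Z]$-projective and $G$-cohomologically-trivial (Remark \ref{TMremark}(i)), together with the fact that $\Fq[Z]$ is a PID. Given $N\in H_S(\Lambda)$, one first deduces that $N$ is $G$-c.t.\ by dimension-shifting along the given finite resolution by $\Lambda$-projectives (each of which is $G$-c.t.). Then for \emph{any} surjection $P\twoheadrightarrow N$ with $P$ finitely generated projective over $R[Z]$, the kernel $\calK$ is finitely generated, $G$-c.t.\ (since $P$ and $N$ are), and $\Fq[Z]$-torsion-free as a submodule of a free $\Fq[Z]$-module, hence $\Fq[Z]$-projective, hence $R[Z]$-projective. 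Thus $\mathrm{pd}_{R[Z]}(N)\le 1$ outright. Your proposal never invokes $G$-cohomological triviality, and without it there is no way to control the syzygies when $\ell$ divides $|G|$.
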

\begin{proof}
Exactness follows from Lemma \ref{Lambdaflat} below. Equivalence is a special case of the general result \cite[Prop. V.7.5(1)]{K-book} but, for the sake of clarity and simplicity, we sketch here a more direct and explicit proof instead.
We implicitly use Lemma \ref{TMremark1}(i) for both $S=\Fq[Z]$ and $S=\Fq[\![Z]\!]$.

Since we only consider $S$-torsion modules, the inverse of the scalar extension functor is the forgetful functor, provided that it is a well-defined functor $H_S(\Lambda)\to H_S(R[Z])$.
We fix an object $M$ of $H_S(\Lambda)$ and show that $M$ also belongs to $H_S(R[Z])$. 

Now, $M$ is necessarily finitely generated over $R[Z]$. The existence of a finite resolution by finitely generated projective $\Lambda$-modules also implies that $M$ is $G$-c.t.

We now fix a short exact sequence of finitely generated $R[Z]$-modules
$$0\to\calK\to P\to M\to 0$$
in which $P$ is projective. Since both $M$ and $P$ are $G$-c.t., so must be $\calK$.

Since $\Fq[Z]$ is a principal ideal domain, $\calK$ is also $\Fq[Z]$-projective, so $\calK$ must be $R[Z]$-projective. This proves that $M$ belongs to $H_S(R[Z])$, as required. (In fact, picking the projective resolution of Lemma \ref{43.4}, with $Z$ for $t$, gives a direct proof.) 
\end{proof}

\begin{lemma}\label{Lambdaflat} $\Lambda$ is a flat $R[Z]$-module.
\end{lemma}
\begin{proof} By the equivalence of (i) and (iv) in \cite[Expos\'e IV,\S 5,p.98,Thm. 5.6]{SGA1}, it suffices to fix $n\in\NN$ and show that $\Lambda\otimes_{R[Z]}(R[Z]/Z^nR[Z])$ is a flat $R[Z]/Z^nR[Z]$-module.

We consider the commutative diagram of $R[Z]$-modules with exact rows
\begin{equation*}\minCDarrowwidth 1.8em
\begin{CD}
0 @> >> Z^nR[Z] @>  >> R[Z] @>   >> R[Z]/(Z^nR[Z]) @>  >>  0\\
@. @V  f_1 VV @V  f_2 VV @VV f_3 V\\
0 @>  >> Z^n\Lambda @>  >> \Lambda @>  >> \Lambda/(Z^n\Lambda) @>  >>  0.\\
@. @A g_1 AA @A  g_2 AA @AA g_3 A\\
@. \Lambda\otimes_{R[Z]}(Z^nR[Z]) @>  >> \Lambda\otimes_{R[Z]}R[Z] @>   >> \Lambda\otimes_{R[Z]}(R[Z]/(Z^nR[Z])) @>  >>  0.
\end{CD}\end{equation*}
We claim that both $f_3$ and $g_3$ are bijective, which implies that $\Lambda\otimes_{R[Z]}(R[Z]/Z^nR[Z])$ is a free $R[Z]/Z^nR[Z]$-module of rank one, and in particular flat.

Our claim follows from the snake lemma after observing that $g_1$ is surjective, that $g_2$ is bijective, that $f_2$ is injective and that the map
$$\cok(f_1)=(Z^n\Lambda)/(Z^nR[Z])\longrightarrow \Lambda/R[Z]=\cok(f_2)$$
induced by $Z^n\Lambda\subset\Lambda$ is bijective. 
\end{proof}

We finally state \cite[Thm. III.3.6]{K-book}, translated to our multiplicative notation.
\begin{proposition}\label{otherWeibel} The following sequence is exact:
$$1\to K_1(R)\xrightarrow{(\beta,\beta')}K_1(\Pi)\oplus K_1(R[Z])\xrightarrow{(\theta'/\theta)} K_1(R[Z^{-1},Z]).$$
Here $(\theta'/\theta)(y,w):=\theta'(y)\cdot\theta(w)^{-1}$.\end{proposition}

\subsubsection{} To complete the proof of Proposition \ref{injectivedeltas}, we finally prove that $\varepsilon$ is injective. We fix $\rho\in\ker(\varepsilon)$ and, using Lemma \ref{limKN}, we fix $x\in K_1(\Lambda)$ such that $\widehat\partial(x)=\rho$. Then $0=\varepsilon(\rho)=\varepsilon(\widehat\partial(x))=\partial(\delta(x))$, and we also fix $y\in K_1(\Pi)$ such that
$\gamma(y)=\delta(x)$.
\begin{lemma}\label{yw}\ 

\noindent{}(i)In the notation of Proposition \ref{otherWeibel}, there is $w\in K_1(R[Z])$ with $(y,w)\in\ker(\theta'/\theta)$.

\noindent{}(ii)For $w$ as in claim (i), one has $\iota(w)=x$.
\end{lemma}
\begin{proof}
By Proposition \ref{Weibel17Prop}, we have
\begin{align*}\mu(\theta'(y))=\zeta_0^{-1}(\zeta_0(\mu(\theta'(y))))=\zeta_0^{-1}(\nu(\lambda(\theta'(y))))=\zeta_0^{-1}(\nu(\gamma(y)))&=\zeta_0^{-1}(\nu(\delta(x)))\\ &=\zeta_0^{-1}(0)=0.\end{align*}
In the third equality, we have used the second equality from (\ref{relations}). Thus, using Proposition \ref{Weibel17Prop} again, there is $w\in K_1(R[Z])$ such that $\theta(w)=\theta'(y)$, i.e., $(y,w)\in\ker(\theta'/\theta)$.

For $w$ as in claim (i), we compute
$$\delta(x\cdot\iota(w)^{-1})=\delta(x)\cdot\delta(\iota(w))^{-1}=\gamma(y)\cdot\lambda(\theta(w))^{-1}=\gamma(y)\cdot\lambda(\theta'(y))^{-1}=\gamma(y)\cdot\gamma(y)^{-1}=1.$$
In the second, resp. fourth, equality, we have used the third, resp. second, equality in (\ref{relations}).
Since $\delta$ is injective by Proposition \ref{Weibel17Prop}, we now deduce that $\iota(w)=x$. \end{proof}

We are ready to combine Lemma \ref{yw} with Proposition \ref{otherWeibel} in order to finally show that $\rho$ is trivial. Indeed, there is $z\in K_1(R)$ such that $\beta'(z)=w$ with $\iota(w)=x$ (and $\beta(z)=y$).
But then
$$\rho=\widehat\partial(x)=\widehat\partial(\iota(w))=\widehat\partial(\iota(\beta'(z)))=\widehat\partial(\alpha(z))=0.$$
In the fourth equality we have used the first equality from (\ref{relations}).

\subsection{A finitely generated representative of the complex of units}

In this section we prove that the complex $C^\cm_{E,K/k}$ belongs to the category $D^{\rm p,c}(A[G],\Fi)$, as claimed in Theorem \ref{MT}. To do so we assume given a fixed taming module $\cm$ for $K/k$.

In the process we obtain a representative of $C^\cm_{E,K/k}$ that is more amenable to the computation, in \S \ref{completionMT}, of the refined Euler characteristic of $(C^\cm_{E,K/k},\lambda^{\cm,-1}_{E,K/k})$.

\begin{theorem}\label{PBT} Fix $A[G]$-modules $M^1$ and $M^2$ as in Theorem \ref{POdiag} (i). Fix a pull-back square of $A[G]$-modules \begin{equation}\label{PBdiag}
\begin{CD}
P^1 @> d >> P^2\\
@V \alpha  VV @VV \beta V\\
M^1 @> {\rm exp}_E^1 >> M^2,
\end{CD}\end{equation}
in which $P^2$ is finitely generated and free, $\beta$ is surjective and ${\rm exp}_E^1$ denotes the restriction of ${\rm exp}_E$ to $M^1$.
Then $P^1$ is locally-free of finite rank and the complex $$P^1\xrightarrow{(d,0)}P^2\oplus\cm,$$ 
with the first term in degree one,
is isomorphic in $D(A[G])$ to $C^\cm_{E,K/k}$. In particular, $C^\cm_{E,K/k}$ belongs to $D^{\rm p,c}(A[G],\Fi)$, as claimed in Theorem \ref{MT}. 
\end{theorem}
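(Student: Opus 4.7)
The plan is to exhibit a morphism of complexes $\phi$ from $P^{\bullet}:=(P^1\xrightarrow{(d,0)} P^2\oplus\cm)$ to $C^\cm_{E,K/k} = (K_\infty\xrightarrow{({\rm exp}_E,0)} V_E\oplus\cm)$, with $V_E:= E(K_\infty)/E(\cm)$, and to verify that $\phi$ is a quasi-isomorphism. I would take $\phi$ to be $\alpha: P^1\to M^1\hookrightarrow K_\infty$ in degree one and $(j\circ\beta,\id_\cm): P^2\oplus\cm\to V_E\oplus\cm$ in degree two, where $j:M^2\hookrightarrow V_E$ is the inclusion appearing in the middle row of the diagram of Theorem \ref{POdiag}(i). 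Commutativity of the resulting square follows from ${\rm exp}_E\circ\alpha = j\circ{\rm exp}_E^1\circ\alpha = j\circ\beta\circ d$, using that $\alpha$ takes values in $M^1$ and the given pull-back square.

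To verify that $\phi$ is a quasi-isomorphism, I would compute the induced maps on cohomology. Since $\beta$ is surjective, the pull-back property yields canonical isomorphisms $\ker(d)\cong\ker({\rm exp}_E^1) = \UU$ (using $\UU\subseteq M^1$) and $\cok(d)\cong\cok({\rm exp}_E^1) = M^2/{\rm exp}_E^1(M^1)$, while the first column of the diagram of Theorem \ref{POdiag}(i) identifies the latter with $H$. Hence $H^1(P^{\bullet}) = \UU$ and $H^2(P^{\bullet}) = H\oplus\cm$, matching the cohomology of $C^\cm_{E,K/k}$ recorded in Remark \ref{regRK}. Tracing through the construction, the induced map on $H^1$ is the identity of $\UU$; and on $H^2$ it is the identity of $H\oplus\cm$, since $j$ induces the identity map on the common quotient $H$, as is visible from the commutativity of the first two columns of the diagram of Theorem \ref{POdiag}(i).

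For the second assertion, that $C^\cm_{E,K/k}$ belongs to $D^{\rm p,c}(A[G],\Fi)$, the cohomology condition is immediate: since $\UU$ and $\cm$ are $A[G]$-lattices in $K_\infty$ and $H$ is finite, $H^j(C^\cm_{E,K/k})\otimes_A\Fi\cong K_\infty$ for $j\in\{1,2\}$, which is projective as an $\Fi[G]$-module. For perfectness, I only need to verify that both $P^1$ and $P^2\oplus\cm$ are finitely generated projective $A[G]$-modules. The latter is clear: $P^2$ is finitely generated free by assumption, and $\cm$ is $A[G]$-projective because it is $\co_k[G]$-projective and $\co_k$ is $A$-free of finite rank.

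The main obstacle is showing that $P^1$ itself is finitely generated projective. Surjectivity of $\beta$ implies that $\alpha:P^1\to M^1$ is surjective with kernel $\ker(\beta)$, and because $M^1$ is $A[G]$-free, the sequence $0\to\ker(\beta)\to P^1\to M^1\to 0$ splits, yielding $P^1\cong M^1\oplus\ker(\beta)$. It therefore suffices to prove that $\ker(\beta)$ is finitely generated projective. Finite generation follows from the Noetherianity of $A[G]$ and the finite generation of $P^2$. For projectivity, the key point is that $M^2$ has $A[G]$-projective dimension at most one. To this end, I would argue that $M^2$ is finite (since $M^1$ and $\UU$ are $A[G]$-lattices in $K_\infty$ of equal $A$-rank, so that $M^1/\UU$ is finite, and $H$ is finite) and $G$-cohomologically-trivial (using the short exact sequence $0\to M^2\to V_E\to K_\infty/M^1\to 0$ from Theorem \ref{POdiag}(i), the $R$-projectivity of $V_E$ from Proposition \ref{Uprop}, and the $R$-projectivity of $K_\infty/M^1$ granted by Remark \ref{VGct}, which together force the sequence to split over $R$ and exhibit $M^2$ as an $R$-direct-summand of $V_E$). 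Lemma \ref{43.4} then produces a resolution of $M^2$ of length one by finitely generated $A[G]$-projectives, and Schanuel's lemma finally forces $\ker(\beta)$ to be $A[G]$-projective.
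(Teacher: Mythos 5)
Your proposal is correct, and it takes a genuinely different route from the paper on both of the two main points.

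For the isomorphism in $D(A[G])$, the paper does not construct a morphism of complexes. Instead it passes to the Yoneda Ext-group $\Ext^2_{A[G]}(H,\UU)$: the complexes $D^\bullet$, $M^\bullet$, $P^\bullet$ (each concentrated in degrees $1,2$ with cohomology $\UU$ and $H$) determine classes there, and the paper shows $\gamma_{D^\bullet}=\gamma_{M^\bullet}=\gamma_{P^\bullet}$, with the first equality coming from the push-out property of the diagram in Theorem \ref{POdiag}(i) via the description of connecting maps in \cite[Lem. 3]{bufl96}, and the second from the diagram (\ref{PBExt}). You instead write down the explicit chain map $(\alpha,(j\circ\beta,\id_\cm))$ and verify directly that it is a quasi-isomorphism, using the pull-back square to identify $\ker d\cong\UU$ and (via the snake lemma) $\cok d\cong\cok(\exp_E^1)\cong H$, and the commutativity of the first two columns of the diagram of Theorem \ref{POdiag}(i) to identify the induced maps on cohomology. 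This is a cleaner and more self-contained argument than the paper's: it avoids the Yoneda formalism entirely, at the small cost of having to check commutativity and the cohomology identifications by hand. Of course the two are morally equivalent, since a quasi-isomorphism inducing the identity on $\UU$ and $H$ is exactly what makes the Ext classes agree.

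For the projectivity of $P^1$, the paper observes that $P^1$ is finitely generated and $A$-free by the explicit construction of the pull-back inside $P^2\times M^1$, then shows $P^1$ is $G$-c.t. by a diagram chase on (\ref{PBExt}) using that $P^2$, $M^1$, $M^2$ are $G$-c.t., and finally invokes Remark \ref{TMremark}(i). You instead split $P^1\cong M^1\oplus\ker(\beta)$ using that $M^1$ is projective, and then show $\ker(\beta)$ is projective by combining Lemma \ref{43.4} (which, after noting that $M^2$ is finite and $G$-c.t., gives a length-one projective resolution of $M^2$) with Schanuel's lemma applied to $0\to\ker(\beta)\to P^2\to M^2\to 0$. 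Both arguments hinge on establishing that $M^2$ is $G$-c.t. via the second row of the diagram of Theorem \ref{POdiag}(i), so the essential input is the same, but the paper's route through Remark \ref{TMremark}(i) is slightly shorter, while yours is closer to first principles and exhibits $P^1$ as an explicit direct sum. One small improvement you could make: instead of the Schanuel argument, you could note directly that $\ker(\beta)$ is $G$-c.t. (from $0\to\ker(\beta)\to P^2\to M^2\to 0$ with $P^2,M^2$ $G$-c.t.) and $A$-free (as a submodule of $P^2$), and then apply Remark \ref{TMremark}(i) to $\ker(\beta)$ itself, which would align your argument more closely with the paper's and remove the need for Lemma \ref{43.4} at this step.
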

\begin{proof}
We first observe that the sequence
\begin{equation}\label{presentationh}0\to P^1\xrightarrow{(d,\alpha)}P^2\oplus M^1\xrightarrow{\beta-{\rm exp}_E^1}M^2\to 0.\end{equation}
is exact. Indeed, it is exact at the first two terms by \cite[Lem. III.1.1]{HiltonS}, and exact at $M^2$ because $\beta$ is assumed to be surjective, and $\beta(\pi)=(\beta-{\rm exp}_E^1)(\pi,0)$ for all $\pi\in P^2$.

We next recall that $M^1$, $P^2$ (by Lemma \ref{TMremark1}(i)) and $M^2$ (by exactness of the second row of the diagram in Theorem \ref{POdiag}) are $G$-c.-t.
By the exactness of (\ref{presentationh}), $P^1$ is $G$-c.-t., and also $A$-free of finite rank. Thus $P^1$ is finitely generated over $A[G]$, and $A[G]$-projective by Lemma \ref{TMremark1}(i).

In fact, $P^1$ is locally-free by Swan's Theorem (Lemma \ref{TMremark1}(iii)) because (\ref{presentationh}) induces an isomorphism $P^1_{\F}\cong(P^2\oplus M^1)_{\F}$, and $P^2$ and $M^1$ are locally-free.

We next set $\mathbb{U}:=\expm$ and $H:=H(E/\cm)$ and we note that the fixed pull-back square (\ref{PBdiag}) induces an exact commutative diagram of the form
\begin{equation}\label{PBExt}
\begin{CD}
0 @>  >> \mathbb{U} @>  >> P^1 @> d >> P^2 @>  >> H @>  >> 0\\
@. @\vert @V \alpha  VV @VV \beta V @\vert @.\\
0 @>  >> \mathbb{U} @>  >> M^1 @> {\rm exp}_E^1 >> M^2 @>  >> H @>  >> 0.\end{CD}\end{equation}

We write $D^\bullet$ for the complex of $A[G]$-modules $$K_\infty\,\xrightarrow{{\rm exp}_E}\,E(K_\infty)/E(\cm),$$ $M^\bullet$ for the complex $M^1\xrightarrow{{\rm exp}_E^1}M^2$ and $P^\bullet$ for the complex $P^1\stackrel{d}{\longrightarrow}P^2$, in all cases
with the first term placed in degree one.

Then, since
$$H^j(D^\bullet)=\begin{cases}\mathbb{U}, &\qquad j=1,\\ 
H, &\qquad j=2,\\
0, &\qquad j\neq 1,2,\end{cases}$$
these three complexes give rise to respective classes $\gamma_{D^\bullet}$, $\gamma_{M^\bullet}$ and $\gamma_{P^\bullet}$ in the Yoneda Ext-group $\Ext^2_{A[G]}(H,\mathbb{U})$, with $\gamma_{M^\bullet}=\gamma_{P^\bullet}$ via (\ref{PBExt}).

To complete the proof of the Theorem, after recalling Remark \ref{regRK}, it only remains to show that 
$D^\bullet\cong P^\bullet$ in $D(A[G])$. But, in order to prove that, in fact, $D^\bullet\cong M^\bullet\cong P^\bullet$, it is enough to prove that $\gamma_{D^\bullet}=\gamma_{M^\bullet}=\gamma_{P^\bullet}$ or, equivalently, that $\gamma_{D^\bullet}=\gamma_{M^\bullet}$.

For this we use the canonical commutative triangle of abelian groups
\begin{equation*}
\xymatrix{& \Ext^2_{A[G]}(H,\mathbb{U})  \\ \Ext^1_{A[G]}(H,M^1/\mathbb{U}) \ar@{->}[ur]^{\delta}  \ar@{->}[rr]^{\iota}  & &\Ext^1_{A[G]}(H,K_\infty/\mathbb{U}),  \ar@{->}[ul]_{\delta'}}
\end{equation*}
and we denote by $\epsilon$, resp. $\epsilon'$, the class in $\Ext^1_{A[G]}(H,M^1/\mathbb{U})$, resp. $\Ext^1_{A[G]}(H,K_\infty/\mathbb{U})$, of the first, resp. second, column of the diagram in Theorem \ref{POdiag}.

Then the left-hand square of the diagram in Theorem \ref{POdiag} is a push-out diagram by Lemma \ref{POlemma} below, which implies that $\iota(\epsilon)=\epsilon'$. It follows that $$\gamma_{D^\bullet}=\delta'(\epsilon')=\delta'(\iota(\epsilon))=\delta(\epsilon)=\gamma_{M^\bullet},$$ where the first and last equalities follow, for example, from the general description of connecting homomorphisms given in \cite[Lem. 3]{bufl96}.
\end{proof}

\begin{remark}\label{lflf}{\em The proof of Theorem \ref{PBT} uses that $M^1$ is locally-free, but does not require it to be free. Thus, by Remark \ref{TMremark2}, it remains valid for any projective $A[G]$-lattice $M^1$ as in Theorem \ref{POdiag} (i) and associated $A[G]$-module $M^2$.
}
\end{remark}

\begin{lemma}\label{POlemma} The left-hand square of the diagram in Theorem \ref{POdiag} is a push-out.
\end{lemma}
\begin{proof} This follows from the dual statement to \cite[Lem. III.1.3]{HiltonS}, which is left as Exercise III.1.3 and used throughout loc. cit., but not stated explicitly. To give a more direct argument, we write $\iota_1:M^1/\UU\to K_\infty/\UU$, $\iota_2:M^1/\UU\to M^2$, $\iota_3:M^2\to V_E$, $\pi_1:K_\infty/\UU\to K_\infty/M^1$ and $\pi_3:V_E\to K_\infty/M^1$ for the corresponding arrows in the diagram. Then by the dual statement to \cite[Lem. III.1.1]{HiltonS}, or by Exercise II.9.2.(ii) in loc. cit, the relevant square is a push-out diagram if and only if the sequence
$$(M^1/\UU)\xrightarrow{(\iota_1,\iota_2)}(K_\infty/\UU)\oplus M^2\xrightarrow{\exp_E-\,\iota_3}V_E\to 0$$
is exact. We proceed to prove this exactness claim.

Clearly $\im((\iota_1,\iota_2))\subseteq\ker(\exp_E-\,\iota_3)$. For the converse, assume that $\exp_E(\kappa)=\iota_3(m)$. Then $\pi_1(\kappa)=\pi_3(\iota_3(m))=0$, so there is $\mu\in M^1/\UU$ with $\iota_1(\mu)=\kappa$. Since then $\iota_3(\iota_2(\mu))=\exp_E(\kappa)=\iota_3(m)$ and $\iota_3$ is injective, we also have $\iota_2(\mu)=m$, as required.

It remains to show that $\exp_E-\,\iota_3$ is surjective. Let $v\in V_E$ and fix $\kappa\in K_\infty/\UU$ such that $\pi_1(\kappa)=\pi_3(v)$. Then $\pi_3(\exp_E(\kappa)-v)=\pi_1(\kappa)-\pi_1(\kappa)=0$, so there is $m\in M^2$ with $\iota_3(m)=\exp_E(\kappa)-v$, which means that $v=\exp_E(\kappa)-\iota_3(m)$, as required.
\end{proof}

\subsection{Completion of the proof of Theorem \ref{MT}}\label{completionMT}

We fix a pull-back square (\ref{PBdiag}) as in Theorem \ref{PBT}. By this result and Definition \ref{ECex}, $\chi_{G}(C^\cm_{E,K/k},\lambda^{\cm,-1}_{E,K/k})$ is equal to $-\left[P^1,\gamma,P^2\oplus\cm\right]$, where $\gamma$ denotes the composite isomorphism of $\FiG$-modules
$$P^1_{\Fi}\xrightarrow{(d,\alpha)_{\Fi}}P^2_{\Fi}\oplus M^1_{\Fi}\xrightarrow{{\rm id}\oplus\lambda}P^2_{\Fi}\oplus\cm_{\Fi}.$$

By Theorem \ref{POdiag} (ii), it is therefore enough to show that $$\left[P^1,\gamma,P^2\oplus\cm\right]\,=\,\left[M^1,\lambda,\cm\right]+\KFit_G(M^2)$$ or, equivalently, that
\begin{equation}\label{FIapplied}\left[P^1,(d,\alpha)_{\Fi},P^2\oplus M^1\right]\,=\,\KFit_G(M^2).\end{equation}
The latter equality is in turn a direct consequence of Lemma \ref{Fitindependence}, applied to the short exact sequence of $A[G]$-modules (\ref{presentationh}).
This completes the proof of Theorem \ref{MT}.

\section{The proof of Theorem \ref{mtII}}\label{7}

In this section we fix a finite Galois extension $K/k$ with the property that $G=\Gal(K/k)$ admits a decomposition of the form (\ref{semidirect}) (i.e., that $\ell$ does not divide the order of the commutator subgroup of $G$).

We then fix a Drinfeld module $E/\co_k$ and a taming module $\cm$ for $K/k$. We set $\UU:=\expm$ and $H:=H(E/\cm)$.

Since $H$ naturally surjects onto $H(E/\co_K)$, the final claim of Theorem \ref{mtII} follows directly upon combining the first claim with \cite[Thm. 2.2 (ii), (iii)]{jn}.

We now fix a projective $A[G]$-lattice $M^1$ as in Theorem \ref{POdiag} (i), as well as a corresponding $A[G]$-module $M^2$ and, by abuse of notation, we again identify $\lambda=\lambda^\cm_{E,K/k}$ with the isomorphism
$M^1_{\Fi}=\UU_{\Fi}\stackrel{\lambda}{\longrightarrow}\cm_{\Fi}$.

Now, since $M^2$ surjects onto $H$, and by applying \cite[Thm. 2.2 (iii)]{jn}, the proof of the first claim of Theorem \ref{mtII} is reduced to showing that
\begin{equation}\label{M2fit}Z(A[G])\cdot\left\{\theta^{\cm}_{E,K/k}\cdot R(\psi)\Bigm\vert\,\psi\in\Hom_{A[G]}\left(M^1,\cm\right)\right\}=\Fit_{A[G]}(M^2),\end{equation}
where the left-hand side denotes the $Z(A[G])$-submodule of $Z(\FiG)$ generated by the given subset, and
we have put
\begin{equation}\label{ell inv def} R(\psi)\, :=\, {\rm Nrd}_{\FiG}(\psi_{\Fi}\circ \lambda^{-1}).\end{equation}

We also note in passing that the equality (\ref{M2fit}), in combination with \cite[Thm. 2.2 (v)]{jn}, justifies the claim made in Remark \ref{finerexpectation}.

In order to now verify the equality (\ref{M2fit}), since non-commutative Fitting ideals commute with localisation (by \cite[Thm. 2.2 (vii)]{jn}), it is enough to show that, for every maximal ideal $P$ of $A$, 
one has
\begin{multline}\label{newenoughp}Z(A_{(P)}[G])\cdot\left\{\theta_{E,K/k}^\cm\cdot R(\psi)\Bigm\vert\,\psi\in\Hom_{A_{(P)}[G]}\left(A_{(P)}\otimes_A M^1,A_{(P)}\otimes_A\cm\right)\right\}\\ =\Fit_{A_{(P)}[G]}(A_{(P)}\otimes_A M^2),\end{multline}
where the left-hand side denotes the $Z(A_{(P)}[G])$-submodule of $Z(\FiG)$ generated by the given subset, and
with $R(\psi)$ defined exactly as in (\ref{ell inv def}).

We thus fix a maximal ideal $P$ of $A$, set $\B_P:=A_{(P)}[G]$ and, for every $A[G]$-module $M$, also $M_P:=A_{(P)}\otimes_A M$. We now proceed to verify the equality (\ref{newenoughp}).

By Remark \ref{TMremark2}, the $A[G]$-modules $\cm$ and $M^1$ are both locally-free of rank equal to $[k:\F]$. We thus may and do fix an isomorphism $\phi:M^1_P\cong\cm_P$ and observe that for any $\psi$ as in (\ref{newenoughp}), the difference
$$R(\psi)\cdot R(\phi)^{-1}\,=\,\Nrd_{\Fi[G]}((\psi\circ\phi^{-1})_{\Fi})\,=\,\Nrd_{\B_P}(\psi\circ\phi^{-1})$$
belongs to $Z(\B_P)$. 
To verify (\ref{newenoughp}) it therefore suffices to prove that 
\begin{equation}\label{newyetanotherreductionp}Z(\B_P)\cdot\theta_{E,K/k}^\cm\cdot R(\phi)=\Fit_{\B_P}(M^2_P).\end{equation}

We fix a pull-back square (\ref{PBdiag}) as in Theorem \ref{PBT} and abbreviate the map $(d,\alpha)$ in the sequence (\ref{presentationh}) to $h$.
We use the canonical commutative triangle of abelian groups
\begin{equation*}
\xymatrix{& K_0(A[G],\FiG) \ar@{->}[dr]^{\iota_P} \\ K_1(\FiG) \ar@{->}[ur]^{\partial_G}  \ar@{->}[rr]^{\partial_{G,P}}  & &K_0(\B_P,\FiG).  }
\end{equation*}

Then Theorem \ref{MT}, via Theorem \ref{POdiag} (ii) and the equality (\ref{FIapplied}), implies that
\begin{align*}\partial_{G,P}([h_{\Fi}])=&\iota_P\left(\KFit_G(M^2)\right)\\
=&\partial_{G,P}\left(\Theta^\cm_{E,K/k}\right)-[M^1_P,\lambda,\cm_P]\\
=&\partial_{G,P}\left(\Theta^\cm_{E,K/k}\right)+[\cm_P,\lambda^{-1},M^1_P]+[M^1_P,\phi_{\Fi},\cm_P]\\
=&\partial_{G,P}\left(\Theta^\cm_{E,K/k}\right)+[\cm_P,\phi_{\Fi}\circ\lambda^{-1},\cm_P]\\
=&\partial_{G,P}\left(\Theta^\cm_{E,K/k}\cdot \left[\phi_{\Fi}\circ\lambda^{-1}\right]\right).
\end{align*}
Now, since $$\Nrd_{\FiG}\left(\ker(\partial_{G,P})\right)=\Nrd_{\FiG}\left(\im\left(K_1(\B_P)\right)\right)=\Nrd_{\B_P}\left(K_1(\B_P)\right)=Z(\B_P)^\times,$$
with the last equality by Lemma \ref{passestoK}, we deduce the required equality (\ref{newyetanotherreductionp}) because
$$Z(\B_P)\cdot\theta_{E,K/k}^\cm\cdot R(\phi)=Z(\B_P)\cdot \Nrd_{\FiG}\left([h_{\Fi}]\right)=\Fit_{\B_P}(M^2_P).$$
Here the last equality holds by \cite[Prop. 3.4]{jn}.

This completes the proof of Theorem \ref{mtII}.

\Addresses

\begin{thebibliography}{10}

\bibitem{Anderson} G. W. Anderson,
\newblock An elementary approach to $L$-functions mod $p$,
\newblock J. Number Th. {\bf 80} (2000) 291-303.

\bibitem{at} B. Angl\`es, L. Taelman,
\newblock Arithmetic of characteristic $p$ special $L$-values,
\newblock Proc. London Math. Soc. (3) {\bf 110} (2015), 1000-1032. With an appendix by V. Bosser.

\bibitem{beaumont} T. Beaumont,
\newblock On equivariant class formulas for Anderson modules,
\newblock Res. Num. Th. {\bf 9}, 68 (2023).

\bibitem{TG} N. Bourbaki,
\newblock \'El\'ements de math\'ematique. Topologie g\'en\'erale. Ch. 1 \`a 4.
\newblock Springer, Berlin (1971).

\bibitem{Breuning Burns} M. Breuning, D. Burns,
\newblock Additivity of Euler characteristics in relative algebraic $K$-groups,
\newblock Homology, Homotopy Appl.\ \textbf{7} (2005), No.\ 3, 11--36.

\bibitem{ewt} D. Burns,
\newblock Equivariant Whitehead Torsion and Refined Euler Characteristics,
\newblock C.R.M. Proceedings and Lecture Notes {\bf 36} (2004) 35-59.

\bibitem{bdals} D. Burns,
\newblock On derivatives of Artin $L$-series,
\newblock Invent. Math. {\bf 186} (2011) 291-371.

\bibitem{mcrc} D. Burns,
\newblock On main conjectures in non-commutative Iwasawa theory and related conjectures,
\newblock J. reine angew. Math. {\bf 698} (2015) 105-160.

\bibitem{bufl96} D. Burns, M. Flach,
\newblock On Galois structure invariants associated to Tate motives,
\newblock Amer. J. Math. {\bf 120} (1998) 1343-1397.

\bibitem{bufl99} D. Burns, M. Flach,
\newblock Tamagawa numbers for motives with (non-commutative) coefficients,
\newblock Doc. Math. {\bf 6} (2001) 501-570.

\bibitem{bk} D. Burns, M. Kakde,
\newblock On Weil-\'etale cohomology and Artin $L$-series,
\newblock to appear in Amer. J. Math.

\bibitem{omac} D. Burns, D. Macias Castillo,
\newblock Organising matrices for arithmetic complexes,
\newblock Int. Math. Res. Not. {\bf 2014} (2014) 2814-2883.

\bibitem{nagm} D. Burns, T. Sano,
\newblock On non-commutative Euler systems I: preliminaries on `det' and `Fit',
\newblock to appear in Kyoto J. Math., arXiv:2004.10564.

\bibitem{chinburg} T. Chinburg,
\newblock Galois structure of de Rham cohomology of tame covers of schemes,
\newblock Ann. of Math. (2) {\bf 139} (1994) 443-490.

\bibitem{cfksv} J. Coates, T. Fukaya, K. Kato, R. Sujatha, O. Venjakob,
\newblock The ${\rm Gl}_2$ main conjecture for elliptic curves without complex multiplication,
\newblock Publ. IHES {\bf 101} (2005) 163-208.

\bibitem{curtisr} C. W. Curtis, I. Reiner,
\newblock Methods of Representation Theory, Vol. I and II,
\newblock John Wiley and Sons, New York (1987).

\bibitem{debry} C. Debry,
\newblock Towards a class number formula for Drinfeld modules,
\newblock PhD thesis, U. of Amsterdam and KU Leuven (2016).

\bibitem{delignedet} P. Deligne,
\newblock Le d\'eterminant de la cohomologie,
\newblock in: Current Trends in Arithmetical Algebraic
Geometry,
\newblock Cont. Math. {\bf 67} (1987) 313-346, Amer. Math. Soc.

\bibitem{dMJ} F. R. DeMeyer, G. J. Janusz,
\newblock Group rings which are Azumaya algebras,
\newblock Trans. Am. Math. Soc.  {\bf 279} (1983) 389-395.

\bibitem{fghp} J. Ferrara, N. Green, Z. Higgins, C. D. Popescu,
\newblock An equivariant Tamagawa number formula for Drinfeld modules and applications,
\newblock Alg. and Number Th. {\bf 16} (2022) 2215-2264.

\bibitem{fukaya-kato} T. Fukaya, K. Kato,
\newblock A formulation of conjectures on $p$-adic zeta functions in non-commutative Iwasawa theory,
\newblock Proc. St. Petersburg Math. Soc. Vol. XII, 1--85,
Amer. Math. Soc. Transl. Ser. 2 {\bf 219} (2006), Amer. Math. Soc.,
Providence, RI. 

\bibitem{greenpopescu} N. Green, C. D. Popescu,
\newblock An equivariant Tamagawa number formula for $t$-modules and applications,
\newblock in `Arithmetic of $L$-functions. Proceedings of an international conference held at ICMAT, Madrid, May 2023', EMS Series of Congress Reports, EMS Press, Berlin, 2025, 283-321.

\bibitem{gross} B. H. Gross,
\newblock On the values of abelian $L$-functions at $s=0$,
\newblock J. Fac. Sci. Univ. Tokyo Sect. IA Math., {\bf 35} (1988)
177-197.

\bibitem{SGA1} A. Grothendieck, M. Raynaud,
\newblock Rev\^etements \'Etales et Groupe Fondamental (SGA 1),
\newblock S\'eminaire de G\'eom\'etrie Alg\'ebrique du Bois Marie, Lecture Notes in Math. {\bf 224}, Springer-Verlag 1971.

\bibitem{HiltonS} P. J. Hilton, U. Stammbach,
\newblock A Course in Homological Algebra,
\newblock Grad. Text Math. {\bf 4} (1997), 2nd. ed., Springer.

\bibitem{jn} H. Johnston, A. Nickel,
\newblock Noncommutative Fitting invariants and improved annihilation results,
\newblock J. London Math. Soc. {\bf 88} (2013) 137-160.

\bibitem{kakde} M. Kakde,
\newblock The main conjecture of Iwasawa theory for totally real fields,
\newblock Invent. Math. {\bf 193} (2013) 539-626.

\bibitem{lam} T. Y. Lam,
\newblock A First Course in Noncommutative Rings (Second Edn.),
\newblock Grad. Text Math. {\bf 131} (2001), Springer-Verlag. 

\bibitem{mornev} M. Mornev,
\newblock Shtuka cohomology and special values of Goss $L$-functions,
\newblock preprint, arXiv:1808.00839.

\bibitem{nickel} A. Nickel,
\newblock Non-commutative Fitting invariants and annihilation of class groups,
\newblock J. Algebra {\bf 323} (2010) 2756-2778.

\bibitem{Papikian} M. Papikian,
\newblock Drinfeld Modules,
\newblock Springer Cham (2023).

\bibitem{Passman} D. S. Passman,
\newblock Algebraic structure of group rings,
\newblock Wiley, New York, 1977.

\bibitem{RZ} L. Ribes, P. Zalesskii,
\newblock Profinite Groups,
\newblock Springer Berlin, Heidelberg, 2013.

\bibitem{RWnew} J. Ritter, A. Weiss,
\newblock On the `main conjecture' of equivariant Iwasawa theory,
\newblock J. Amer. Math. Soc. {\bf 24} (2011) 1015-1050.

\bibitem{rubin} K. Rubin,
\newblock A Stark conjecture `over $\ZZ$' for abelian $L$-functions with multiple zeros,
\newblock Ann. Inst. Fourier {\bf 46} (1996) 33-62.

\bibitem{SerreLinRep} J.-P. Serre,
\newblock Linear representations of finite groups,
\newblock Springer-Verlag, 1977.

\bibitem{SerreLocFie} J.-P. Serre,
\newblock Local fields,
\newblock Springer-Verlag, 1979.

\bibitem{swan} R. G. Swan,
\newblock Algebraic $K$-Theory,
\newblock Springer, New York (1978).

\bibitem{TMA} L. Taelman,
\newblock A Dirichlet unit theorem for Drinfeld modules,
\newblock Math. Ann. {\bf 348} (2010) 899-907.

\bibitem{TAM} L. Taelman,
\newblock Special $L$-values of Drinfeld modules,
\newblock Ann. of Math. (2) {\bf 175} (2012) 369-391.

\bibitem{K-book} C. Weibel,
\newblock The $K$-book: an introduction to algebraic $K$-theory,
\newblock Graduate Studies in Math. {\bf 145} (2013), Amer. Math. Soc.

\end{thebibliography}
\end{document}